\numberwithin{equation}{section}
\author{Konstantinos Papafitsoros}
\author{Carola-Bibiane Sch\"onlieb}
\email{k.papafitsoros@maths.cam.ac.uk }
\email{C.B.Schoenlieb@damtp.cam.ac.uk}
\address{ Cambridge Centre for Analysis, Department of Applied Mathematics and Theoretical Physics, University of Cambridge, Wilberforce Road, Cambridge CB3 0WA}
\title{A combined first and second order variational approach for image reconstruction}
\date{\today}
\begin{document}
\maketitle
\begin{abstract}
In this paper we study a variational problem in the space of functions of bounded Hessian. Our model constitutes a straightforward higher-order extension of the well known ROF functional (total variation minimisation) to which we add a non-smooth second order regulariser. It combines convex functions of the total variation and the total variation of the first derivatives. In what follows, we prove existence and uniqueness of minimisers of the combined model and present the numerical solution of the corresponding discretised problem by employing the split Bregman method. The paper is furnished with applications of our model to image denoising, deblurring as well as image inpainting.  The obtained numerical results are compared with results obtained from total generalised variation (TGV), infimal convolution and Euler's elastica, three other state of the art higher-order models. The numerical discussion confirms that the proposed higher-order model competes with models of its kind in avoiding the creation of undesirable artifacts and blocky-like structures in the reconstructed images -- a known disadvantage of the ROF model -- while being simple and efficiently numerically solvable.
\end{abstract}

\section{Introduction}
We consider the following general framework of a combined first and second order non-smooth regularisation procedure. For a given datum $u_{0}\in L^{s}(\Omega)$, $\Omega\subset\mathbb{R}^2$, $s=1,2$, we compute a regularised reconstruction $u$ as a minimiser of a combined first and second order functional $H(u)$. More precisely, we are interested in solving
\begin{eqnarray}\label{functional}
\min_{u}\Big\{H(u)&=&\frac{1}{s}\int_{\Omega}|u_{0}-Tu|^{s}~dx+\alpha\int_{\Omega}f(\nabla u)~dx +\beta\int_{\Omega}g(\nabla^{2}u)~dx\Big\}, 
\end{eqnarray}
for $s\in\{1,2\}$, non-negative regularisation parameters $\alpha,\beta$, convex functions $f:\mathbb{R}^{2}\rightarrow \mathbb{R}^+$, $g:\mathbb{R}^{4}\rightarrow \mathbb{R}^+$  with at most linear growth at infinity, and a suitable linear operator $T$, see Section \ref{relax2order} for details.  The appropriate space for this minimisation is the space of functions of bounded Hessian $BH(\Omega)$ which consists of all functions $u\in W^{1,1}(\Omega)$ such that $\nabla u$ is a function of bounded variation.  The idea of this combination of first and second order dynamics is to regularise with a fairly large weight $\alpha$ in the first order term -- preserving the jumps as good as possible -- and using a not too large weight $\beta$ for the second order term such that artifacts (staircasing) created by the first order regulariser are eliminated without introducing any serious blur in the reconstructed image. We will show that for image denoising, deblurring as well as inpainting the model \eqref{functional} offers solutions whose quality (accessed by an image quality measure) is not far off from the ones produced by some of the currently best higher-order reconstruction methods in the field, e.g., the recently proposed total generalised variation (TGV) model \cite{TGV}. Moreover, the computational effort needed for its numerical solution is not much more than the one needed for solving the standard ROF model \cite{rudin1992nonlinear}. For comparison the numerical solution for TGV regularisation is in general about ten times slower than this, see Table \ref{times} at the end of the paper.\\

In this paper we prove existence and uniqueness of \eqref{functional} for the classical setting of the problem in the space $W^{2,1}(\Omega)$ by means of relaxation. The generality of this result includes both the classical variational formulation in $W^{2,1}$, e.g. for the Huber regularised version of the total variation, as well as the non-smooth norm minimisation setting in $BH(\Omega)$, which constitutes the relaxed version of \eqref{functional}. In the numerical part of the paper a discretised version of \eqref{functional} is minimised by means of an operator splitting technique for the cases $f(x)=|x|$ and $g(x)=|x|$, and its application to image denoising, deblurring and inpainting is discussed. 

The rest of the introduction is structured as follows. In Section \ref{sec:context} we phrase the general inverse problem for image reconstruction, which leads us to non-smooth norm minimisation, e.g. total variation minimisation, and eventually to the introduction of higher-order regularisers within this class. This section is followed by a presentation of state of the art higher-order methods in imaging in Section \ref{secrelate} and a discussion of some models from this group which are closely related to \eqref{functional} in Section \ref{secinftgv}.

\subsection{Context}\label{sec:context}
A general inverse problem in imaging reads as follows. Observing or measuring data $u_{0}\in\mathcal H$ in a suitable Hilbert space of real functions defined on a domain $\Omega$, we seek for the original or reconstructed image $u$ that fulfils the model
 \begin{equation}\label{invprob}
u_0=Tu+n,
\end{equation}
where $T$ is a linear operator in $\mathcal H$, i.e., $T\in\mathcal L(\mathcal H)$, and $n=n(u_0)$ denotes a possible noise component which -- depending on the noise statistics -- might depend on the data $u_0$. The operator $T$ is the forward operator of this problem. Examples are blurring operators (in which case $Tu$ denotes the convolution of $u$ with a blurring kernel), $T=\mathcal F$ the Fourier transform or $T=\mathcal P_n$ a projection operator onto a subspace of $\mathcal H$ (i.e., only a few samples of $u$ are given). 

To reconstruct $u$ one has to invert the operator $T$. This is not always possible since in many applications a problem can be ill-posed and further complicated by interferences like noise. In this case a common procedure in inverse problems is to add a-priori information to the model, which in general is given by a certain regularity assumption on the image $u$. Hence, instead of solving \eqref{invprob} one computes $u$ as a minimiser of
$$
\mathcal J(u) =  \Phi\left(u_{0},Tu\right)  +\alpha \psi(u),
$$
defined in a suitable Banach space $\mathcal H_\psi$. Here, $\psi$ models the regularity assumption on $u$ with a certain regularity parameter $\alpha>0$ and is called the regulariser of the problem, and $\Phi$ is a distance function in $\mathcal H$ that enforces \eqref{invprob}. The latter depends on the statistics of the data $u_0$, which can be either estimated or are known from the physics behind the acquisition of $u_0$. For $u_{0}$ corrupted by normally distributed additive noise, this distance function is the squared $L^2$ norm of $u_{0}-Tu$. For the choice of the regulariser $\psi$, squared Hilbert space norms have a long tradition in inverse problems. The most prominent example is $H^1$ regularisation
\begin{equation}\label{gaussfilt}
\min_{u\in H^1}\left\{\frac{1}{2} \|u_{0}-Tu\|^2_{L^2(\Omega)} +\alpha \|\nabla u\|_{L^2(\Omega)}^2  \right\},
\end{equation}
see also \cite{Tikhonov,Whittaker}. For $T=Id$, the gradient flow of the corresponding Euler-Lagrange equation  of \eqref{gaussfilt} reads $u_{t}=\alpha\Delta u -u +u_{0}$. The result of such a regularisation technique is a linearly, i.e., isotropically, smoothed image $u$, for which the smoothing strength does not depend on $u_{0}$. Hence, while eliminating the disruptive noise in the given data $u_{0}$ also prominent structures like edges in the reconstructed image are blurred. This observation gave way to a new class of non-smooth norm regularisers, which aim to eliminate noise and smooth the image in homogeneous areas, while preserving the relevant structures such as object boundaries and edges. More precisely, instead of \eqref{gaussfilt} one considers the following functional over the space $W^{1,1}(\Omega)$:
\begin{equation}\label{1storder}
\mathcal J(u) =  \frac{1}{2} \|u_{0}-Tu\|_{L^2(\Omega)}^2+\int_{\Omega} f(\nabla u) ~ dx,
\end{equation}
where $f$ is a function from $\mathbb{R}^{2}$ to $\mathbb{R}^{+}$ with at most linear growth, see \cite{vese2001study}. As stated in \eqref{1storder} the minimisation of $\mathcal J$ over $W^{1,1}(\Omega)$ is not well-posed in general. For this reason relaxation procedures are applied, which embed the optimisation for $\mathcal J$ into the optimisation for its lower semicontinuous envelope within the larger space of functions of bounded variation, see Section \ref{preliminaries}. The most famous example in image processing is $f(x)=|x|$, which for $T=Id$ results in the so-called ROF model \cite{rudin1992nonlinear}. In this case the relaxed formulation of \eqref{1storder} is the total variation denoising model, where $\|\nabla u\|_{L^1(\Omega)}$ is replaced by the total variation $|D u|(\Omega)$ and $\mathcal J$ is minimised over the space of functions of bounded variation. Other examples for $f$ are regularised versions of the total variation like $f(x)=\sqrt{x^2+\epsilon^2}$ for a positive $\epsilon\ll 1$ \cite{Acar94analysisof,FengProhl}, the Huber-regulariser and alike \cite{charbonnier:two,Green90bayesianreconstructions,ChambolleLions,Nikolova}.  The consideration of such regularised versions of $|\nabla u|$ is sometimes of advantage in applications where perfect edges are traded against a certain smoothness in homogeneous parts of the image, \cite{pcbc09}. Moreover such regularisations become necessary for the numerical solution of \eqref{1storder} by means of time-stepping \cite{vese2001study} or multigrid-methods \cite{vogel95,Henn2004,Vassilevski97acomparison} for instance.

As these and many more contributions in the image processing community have proven, this new non-smooth regularisation procedure indeed results in a nonlinear smoothing of the image, smoothing more in homogeneous areas of the image domain and preserving characteristic structures such as edges. In particular, the total variation regulariser is tuned towards the preservation of edges and performs very well if the reconstructed image is piecewise constant. The drawback of such a regularisation procedure becomes apparent as soon as one considers images or signals (in 1D) which do not only consist of flat regions and jumps, but also possess slanted regions, i.e., piecewise linear parts. The artifact introduced by total variation regularisation in this case is called staircasing. Roughly this means that the total variation regularisation of a noisy linear function $u_{0}$ in one dimension is a staircase $u$, whose $L^2$ norm is close to $u_{0}$, see Figure \ref{staircase1D}.  In two dimensions this effect results in blocky-like images, see Figure \ref{staircase}. In one dimension this effect has been rigorously studied in \cite{dal2009higher}.

\begin{figure*}
\begin{center}
\subfigure[Noisy, piecewise linear signal]{\includegraphics[width=8cm]{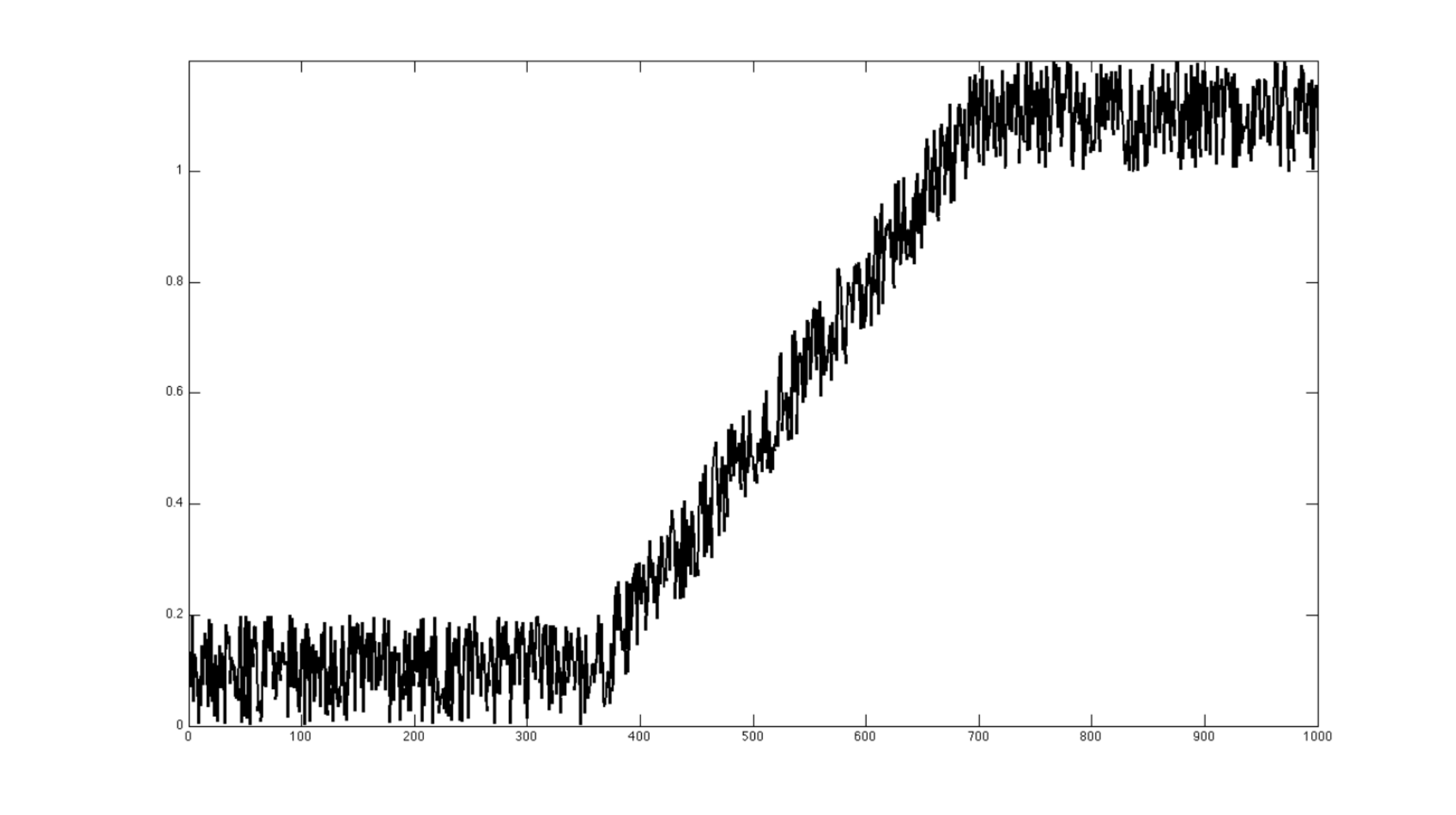}}
\subfigure[First order total variation denoised signal]{\includegraphics[width=8cm]{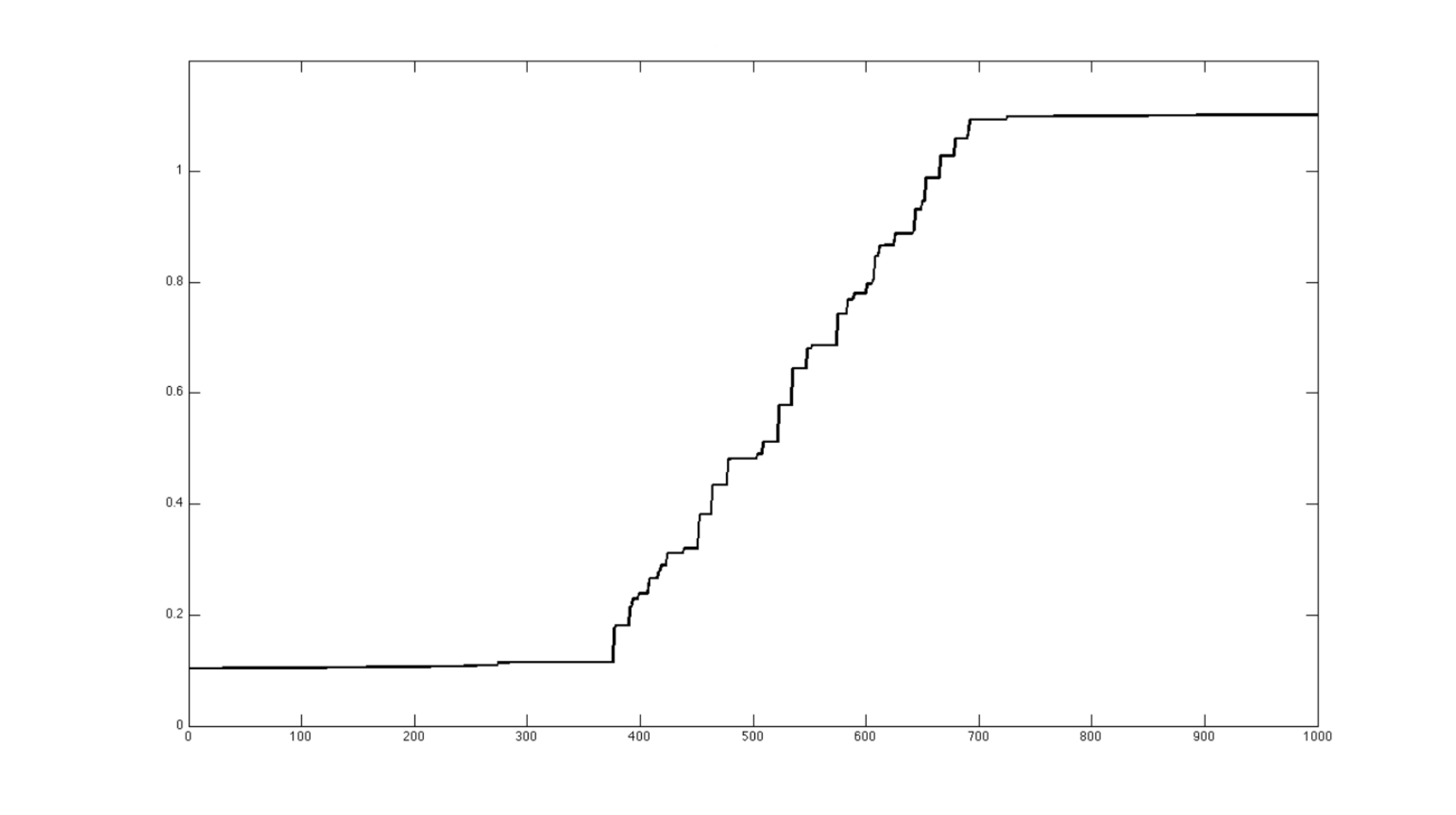}}
\end{center}
\caption{Illustration of the staircasing effect in one space dimension}
\label{staircase1D}
\end{figure*}

\begin{figure*}
\begin{center}
\includegraphics[height=3.5cm]{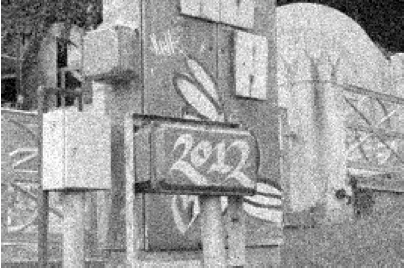}\includegraphics[height=3.5cm]{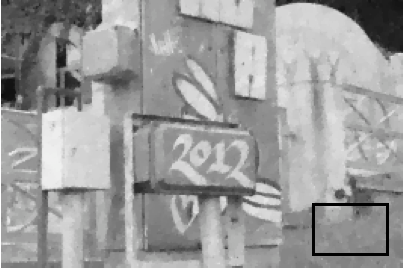} \includegraphics[height=3.5cm]{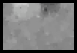}
\begin{picture}(100,0)
\thicklines\put(105,35){\vector(1,1){20}}
\end{picture}
\vspace*{-0.5cm}
\caption{Total variation image denoising and the staircasing effect: (l.) noisy image, (m.) denoised image, (r.) detail of the bottom right hand corner of the denoised image to visualise the staircasing effect (the creation of blocky-like patterns due to the total variation regulariser)}
\label{staircase}
\end{center}
\end{figure*}

 One way to reduce this staircasing effect is in fact to ``round off'' the total variation term by using regularised versions defined by functions $f$ as indicated above, e.g., Huber regularisation \cite{pcbc09}. However, such a procedure can only reduce these artifacts to a certain extent. For instance, the Huber-type regularisation will eliminate the staircasing effect only in areas with small gradient. Another way of improving total variation minimisation is the introduction of higher-order derivatives in the regulariser as in \eqref{functional}. Especially in recent years, higher-order versions of non-smooth image enhancing methods have been considered.

\subsection{Related work}\label{secrelate}
Already in the pioneering paper of Chambolle and Lions \cite{ChambolleLions} the authors propose a higher-order method by means of an inf-convolution of two convex regularisers. Here, a noisy image is decomposed into three parts $u_{0}=u_1+u_2+n$ by solving
\begin{eqnarray}\label{infconv}
\min_{(u_1,u_2)} \Big\{ \frac{1}{2} \|u_{0}-u_1-u_2\|_{L^2(\Omega)}^2 +\alpha \int_\Omega |\nabla u_1|~ dx + \beta \int_\Omega |\nabla^2 u_2|~ dx  \Big\},
\end{eqnarray}
where $\nabla^2 u_2$ is the distributional Hessian of $u_2$. Then, $u_1$ is the piecewise constant part of $u_0$, $u_2$ the piecewise smooth part and $n$ the noise (or texture). Along these lines a modified infimal-convolution approach has been recently proposed in the discrete setting in \cite{SS08,setzer2011infimal}. Another attempt to combine first and second order regularisation originates from Chan, Marquina, and Mulet \cite{chan2001high}, who consider total variation minimisation together with weighted versions of the Laplacian. More precisely, they consider a regularising term of the form
$$
\alpha \int_\Omega |\nabla u|~ dx + \beta \int_\Omega f(|\nabla u|) (\Delta u)^2~ dx,
$$
where $f$ must be a function with certain growth conditions at infinity in order to allow jumps. The well-posedness of the latter in one space dimension has been rigorously analysed by Dal Maso, Fonseca, Leoni and Morini \cite{dal2009higher} via the method of relaxation.

The idea of bounded Hessian regularisers was also considered by Lysaker et al. \cite{LLT03,LT06}, Chan et al. \cite{CEP07}, Scherzer et al. \cite{Sch98a,HS06}, Lai at al. \cite{lairidge}  and Bergounioux and Piffet \cite{Piffet}. In these works the considered model has the general form
$$
\min_u \left\{\frac{1}{2} \|u_{0}-u\|_{L^2(\Omega)}^2+\alpha |\nabla^2 u|(\Omega)   \right\}.
$$
In Lefkimmiatis et al. \cite{lefkimmiatis2010hessian}, the spectral norm of the Hessian matrix is  considered.
Further, in \cite{PS08} minimisers of functionals which are regularised by the total variation of the $(l-1)$st derivative, i.e.,
$$
|D\nabla^{l-1}u|(\Omega),
$$
are studied. Another interesting higher-order total variation model is proposed by Bredies et al. \cite{TGV}. The considered regulariser is called total generalised variation (TGV) and is of the form
\begin{eqnarray}\label{tgv}
\mathrm{TGV}^k_\alpha(u) = \sup\Big\{\int_\Omega u\mathrm{div}^k\xi~ dx:\;  \xi\in C_c^k(\Omega, \mathrm{Sym}^k(\mathbb{R}^d)),\; \|\mathrm{div}^l \xi\|_\infty\leq\alpha_l,\; l=0,\ldots,k-1\Big\},
\end{eqnarray}
where $\mathrm{Sym}^k(\mathbb{R}^d)$ denotes the space of symmetric tensors of order $k$ with arguments in $\mathbb{R}^d$, and $\alpha_l$ are fixed positive parameters. Its formulation for the solution of general inverse problems was given in \cite{tgvcolour,BredValk}.

Properties of higher-order regularisers in the discrete setting in terms of diffusion filters are further studied in \cite{DiWeBu09}. Therein, the authors consider the Euler-Lagrange equations corresponding to minimisers of functionals of the general type
\begin{equation}\label{JJ}
\mathcal{J}(u) = \int_\Omega (u_0-u)^2 ~ dx + \alpha\int_\Omega f\left(\sum_{|\beta|=p} |D^\beta u|^2\right) ~ dx,
\end{equation}
for different non-quadratic penaliser functions $f$. Moreover, Bertozzi and Greer \cite{BG04} have rigorously studied the fourth-order evolution equation which arises as a gradient flow of $\int G(\Delta u)$, where $G$ is a nondecreasing function of quadratic growth in a neighbourhood of $0$ and at most linear growth at infinity. Solutions of this model are called low curvature image simplifiers and are given by
$$
u_t = -\alpha\Delta(\mathrm{arctan (\Delta u)}) + (u_{0}-u),
$$ 
when $G(s)=s\arctan (s)-1/2\log (s^{2}+1)$.

 Higher-order inpainting methods in general perform much better than first order methods -- like total variation inpainting -- because of the additional directional information used for the interpolation process. 
 Euler's elastica is a popular higher-order variational method  \cite{chan2002euler,tai2010fast}.
There, the regularising term  reads:
 \[ \int_{\Omega} \left (\alpha+\beta\left (\nabla \cdot \frac{\nabla u}{|\nabla u|} \right )^{2}  \right )|\nabla u|~dx,\]
 i.e., is a combination of the total variation and the curvature of the level lines of $u$ (a nonlinear second order regularising term).
 Other examples of higher-order inpainting are the Cahn-Hilliard inpainting \cite{BEG}, TV-H$^{-1}$ inpainting \cite{TVH1_1,TVH1_2} and Hessian-based surface restoration \cite{lairidge}.  
\subsection{Relation of our model to TGV, infimal convolution regularisation, higher-order diffusion filters and Euler's elastica}\label{secinftgv}
In this section we want to analyse the connection of our combined first and second order approach \eqref{functional} with infimal convolution \eqref{infconv} \cite{ChambolleLions,SS08,setzer2011infimal}, the total generalised  variation regulariser of order two \eqref{tgv}  \cite{TGV} and with higher-order diffusion filters \cite{DiWeBu09}. Moreover, in the case of inpainting, we discuss the connection of our model to Euler's elastica \cite{chan2002euler,tai2010fast}.

In the case of inf-convolution \eqref{infconv} the regularised image $u=u_1+u_2$ consists of a function $u_1\in BV(\Omega)$ and a function $u_2\in BH(\Omega)$ which are balanced against each other by positive parameters $\alpha,\beta$. Differently, a minimiser $u$ of \eqref{functional} is in $BH(\Omega)$ as a whole and as such is more regular than the infimal convolution minimiser which is a function in $BV(\Omega)$. Hence, infimal convolution reproduces edges in an image as perfect jumps while in our combined first and second order total variation approach edges are lines where the image function has a large but finite gradient everywhere. We believe that our approach \eqref{functional} can be made equivalent to infimal convolution if combined with the correct choice of adaptive regularisation, e.g. \cite{HintermuellerDong,frick2011statistical}. More precisely, we replace the two constant parameters $\alpha$ and $\beta$ by spatially varying functions $\alpha(x),\beta(x)$ and minimise for $u$
$$
\frac{1}{2} \int_\Omega (u_0-u)^2~ dx + \int_\Omega \alpha(x) |\nabla u|~ dx + \int_\Omega \beta(x) |\nabla^2 u| ~dx.
$$    
Then, we can choose $\alpha$ and $\beta$ according to \eqref{infconv}, i.e., $\alpha=0$ where $u=u_2$, $\beta=0$ where $u=u_1$, and $\alpha/\beta$ correctly balancing $u_1$ and $u_2$ in the rest of $\Omega$.  However, let us emphasise once more that this is not our intention here. \\

The relation of \eqref{functional} to the regularisation approach with total generalised variation \cite{TGV} of order $2$ can be understood through its equivalence with the modified infimal convolution approach \cite{setzer2011infimal} in the discrete setting. The total generalised variation of order $2$ is defined for a positive multi-index $\mathbf\alpha=(\alpha_0,\alpha_1)$ as
\begin{eqnarray*}
\mathrm{TGV}_\alpha^2(u)& =& \sup\Big\{\int_\Omega u~\mathrm{div}^2 v~ dx :\; v\in C_c^2(\Omega,\mathrm{Sym}^2(\mathbb{R}^d)), \; \|\mathrm{div}^l v\|_{\infty}\leq \alpha_l, \;l=0,1\Big\},
\end{eqnarray*}
where $\mathrm{Sym}^2(\mathbb{R}^d)$ is the space of symmetric tensors of order $2$ with arguments in $\mathbb{R}^d$. An alternative definition of TGV$_{\alpha}^{2}$ was proven in \cite{BredValk}
\begin{eqnarray*}
\mathrm{TGV}_\alpha^2(u)& =& \min_{w\in \text{BD}(\Omega)}\alpha_{1}|Du-w|(\Omega)+\alpha_{0}|\mathcal{E}w|(\Omega)
\end{eqnarray*}
where BD$(\Omega)$ is the space of functions of bounded deformation, $\mathcal{E}w$ is the distributional symmetrised gradient of $w$ and $|\cdot|(\Omega)$ denotes the total variation measure evaluated on $\Omega$.
\\

The relation to higher-order diffusion filters as analysed in \cite{DiWeBu09} becomes apparent when considering the Euler-Lagrange equation of \eqref{functional} in the case $T=Id$ and $f$, $g$ having the form $f(x)=h(|x|)$, $g(x)=h(|x|)$, where $h$ is convex and has at most linear growth. Namely, with appropriate boundary conditions we obtain the following Euler-Lagrange equation
\begin{eqnarray}\label{eulerlagrange}
u-u_{0} = \alpha\,\mathrm{div}\left(h'(|\nabla u|) \frac{\nabla u}{|\nabla u|}\right) - \beta \mathrm{div}^{2}\left(h'(|\nabla^2 u|) \frac{\nabla^2 u}{|\nabla^2 u|}\right).\nonumber
\end{eqnarray}
This simplifies for the case $h(x)=\sqrt{|x|^2+\epsilon^2}$ to a regularised first-second order total variation reaction-diffusion equation that reads
\begin{eqnarray*}
u-u_{0}= \alpha\,\mathrm{div}\left( \frac{\nabla u}{\sqrt{|\nabla u|^2+\epsilon^2}}\right) - \beta \mathrm{div}^{2} \left(\frac{\nabla^2 u}{\sqrt{|\nabla^2 u|^2 + \epsilon^2}}\right).
\end{eqnarray*}
The consideration of the corresponding evolution equations for \eqref{eulerlagrange} in the presence of different choices of penalising functions $h$ promises to give rise to additional properties of this regularisation technique and is a matter of future research.

As far as inpainting is concerned, we examine here the connection of our method to Euler's elastica. Depending on how each of the terms in the Euler's elastica regulariser are weighted, the interpolation process is performed differently. If a larger weight is put on the total variation the interpolation results into an image with sharp edges, which however can get disconnected if the scale of the gap is larger than the scale of the object whose edges should be propagated into it. This behaviour is a validation of the so-called ``good continuation principle'' defined by the Gestaltist school \cite{diderot} and not desirable in image inpainting. Putting a larger weight on the curvature term however resolves this issue and gives satisfying results with respect to the continuation principle. The right combination (balance) of these two terms seems to result into a good tradeoff between ``sharpness'' and ``continuation'' of edges. However, one disadvantage of the Euler's elastica inpainting model for analytic and numerical issues is that it is a non-convex minimisation problem. In particular, numerical algorithms are in general not guaranteed to converge to a global minimiser, only local minimisation can be achieved. The minimisation of functional \eqref{functional} can be seen as a  convex simplification of the Euler's elastica idea, where we have replaced the non-convex curvature by the convex total variation of the first derivative of $u$.

For more discussion and comparison of higher-order regularisers we recommend Chapters 4.1.5-4.1.7 and 6.4 in \cite{BenningDiss}.\\

\noindent \emph{Outline of the paper:} In Section \ref{preliminaries} we give a brief introduction to Radon measures, convex functions of measures and functions of bounded variation. In Section \ref{relax2order} we introduce the variational problem \eqref{functional} and the space $BH(\Omega)$ that this functional is naturally defined in. We define two topologies on $BH(\Omega)$ and we identify the lower semicontinuous envelope of \eqref{functional} with respect to these topologies. Finally, we prove the well-posedness -- existence, uniqueness, stability -- of the minimisation of the relaxed functional using standard techniques from calculus of variations and Bregman distances. Section \ref{specialsession} deals with two special versions of \eqref{functional}, the anisotropic version and the case with the $L^{1}$ norm in the fidelity term. In Section \ref{numericssection} we introduce the corresponding discretised problem and we propose the split Bregman method for its numerical implementation in the case $f(x)=|x|$, $g(x)=|x|$. In Sections \ref{denoising}, \ref{deblurring} and \ref{inpainting} we present some numerical examples of our method in image denoising, deblurring and inpainting respectively. Finally, in Section \ref{comparison} we discuss how our approach compares with other higher-order methods like infimal convolution (denoising, deblurring), total generalised variation (denoising, deblurring) and Euler's elastica (inpainting).

\section{Preliminaries}\label{preliminaries}
In this section, we introduce some basic notions that we are going to use.  A reader familiar with Radon measures, $BV$ functions and relaxed functionals can quickly go through Sections \ref{radonsection}, \ref{bvsection} and \ref{relaxedsection} respectively.  Section \ref{convexsection} familiarises the reader with convex functions of measures, a perhaps less known subject.\\

\noindent \textbf{Remarks on standard notation}: As usual, we denote with $\mathcal{L}^{n}$ the Lebesgue measure. Different notions are denoted by $|\cdot|$: When it is applied on vectors or matrices it denotes the Euclidean norm (vector) or the Frobenius norm (matrices). When it is applied on measures it denotes the total variation measure while when it is applied on Borel subsets of $\mathbb{R}^{n}$ it denotes the Lebesgue measure of that subset. Finally, $|\cdot|_{1}$ denotes the $\ell_{1}$ norm in $\mathbb{R}^{n}$ and $(\cdot,\cdot)$ denotes the standard Euclidean inner product.
\subsection{Finite Radon measures}\label{radonsection}
All our notation and definitions are consistent with \cite{AmbrosioBV}. From now on, $\Omega$ denotes an open set in $\mathbb{R}^{n}$. We define the space $[\mathcal{M}(\Omega)]^{m}$ to be the space of $\mathbb{R}^{m}$-valued finite Radon measures. The total variation measure of $\mu\in [\mathcal{M}(\Omega)]^{m}$ is denoted by $|\mu|$. We say that a sequence $(\mu_{k})_{k\in \mathbb{N}}$ in  $[\mathcal{M}(\Omega)]^{m}$ converges weakly$^{\ast}$ to a measure $\mu\in [\mathcal{M}(\Omega)]^{m}$ and we write $\mu_{k}\rightharpoonup\mu$ if $\lim_{k\to\infty}\int_{\Omega}u \,d\mu_{k}=\int_{\Omega}u\,d\mu$
 for all $u\in C_{0}(\Omega)$, the completion of $C_{c}(\Omega)$ endowed with the supremum norm. Thus $(\mu_{k})_{k\in\mathbb{N}}$ converges weakly$^{\ast}$ to $\mu$ if it converges weakly$^{\ast}$ component-wise. We will often consider the Lebesgue decomposition of a $\mathbb{R}^{m}$-valued finite Radon measure $\mu$ with respect to a $\sigma$-finite positive Borel measure $\nu$ :
 \[\mu=\mu^{a}+\mu^{s}=\left(\frac{\mu}{\nu}\right)\nu +\mu^{s},\] 
where  $\mu^{a}$ is the absolutely continuous part of $\mu$ with respect to $\nu$, $\mu^{s}$ is the singular part and $(\mu/\nu)$ denotes the density function of $\mu$ with respect to $\nu$ (Radon-Nikod\'ym derivative). Again this is nothing else than the usual Lebesgue decomposition regarded component-wise. Recall also that any $\mu\in[\mathcal{M}(\Omega)]^{m}$ is absolutely continuous with respect to its total variation measure $|\mu|$ and thus we obtain the \emph{polar decomposition} of $\mu$
\[\mu=\left(\frac{\mu}{|\mu|} \right)|\mu|,\quad with\quad \left |\frac{\mu}{|\mu|} \right |=1,\;\;|\mu|\;\; a.e..\]
\subsection{Convex functions of measures}\label{convexsection} Let $g$ be a continuous function from $\mathbb{R}^{m}$ to $\mathbb{R}$ which is positively homogeneous of degree $1$, i.e., for every $x\in \mathbb{R}^{m}$ 
\[g(tx)=tg(x),\quad \forall t\ge 0.\]
Given a measure $\mu\in[\mathcal{M}(\Omega)]^{m}$, we define the $\mathbb{R}$-valued measure $g(\mu)$ as follows:
\[g(\mu):=g\left(\frac{\mu}{|\mu|} \right)|\mu|.\]
It can be proved that if $g$ is a convex function then $g(\cdot)$ is a convex function in $[\mathcal{M}(\Omega)]^{m}$ and if $\nu$ is any positive measure such that $\mu$ is absolutely continuous with respect to $\nu$ then 
\[g(\mu)=g\left(\frac{\mu}{\nu} \right)\nu.\]
We refer the reader to Proposition \ref{convexmeasure} in Appendix \ref{appendixA} for a proof of the above statement.
Suppose now that $g$ is not necessarily positively homogeneous but it is a continuous function from $\mathbb{R}^{m}\to \mathbb{R}$ which is convex and has at most linear growth at infinity, i.e., there exists a positive constant $K$ such that
\[|g(x)|\le K(1+|x|),\quad \forall x\in \mathbb{R}^{m}.\]
In that case the \emph{recession function} $g_{\infty}$ of $g$ is well defined everywhere, where
\[g_{\infty}(x):=\lim_{t\to\infty}\frac{g(tx)}{t},\quad\forall x\in \mathbb{R}^{m}.\]
It can be proved that $g_{\infty}$ is a convex function and positively homogeneous of degree $1$. Given a measure $\mu\in [\mathcal{M}(\Omega)]^{m}$ we consider the Lebesgue decomposition with respect to Lebesgue measure $\mathcal{L}^{n}$, 
$\mu=(\mu/\mathcal{L}^{n})\mathcal{L}^{n}+\mu^{s}$ and we define the $\mathbb{R}$-valued measure $g(\mu)$ as follows:
\begin{eqnarray}\label{defconv}
g(\mu)&=&g\left (\frac{\mu}{\mathcal{L}^{n}} \right )\mathcal{L}^{n}+g_{\infty}(\mu^{s})\nonumber\\&=&g\left (\frac{\mu}{\mathcal{L}^{n}} \right )\mathcal{L}^{n}+g_{\infty}\left(\frac{\mu^{s}}{|\mu^{s}|} \right)|\mu^{s}|.
\end{eqnarray}
We refer the reader to Theorem \ref{BF} in Appendix \ref{appendixA} for a result regarding lower semicontinuity of convex functions of measures with respect to the weak$^{\ast}$ convergence.

\subsection{The space $[BV(\Omega)]^{m}$}\label{bvsection}
We recall that a function $u\in L^{1}(\Omega)$ is said to be a function of bounded variation or else $u\in BV(\Omega)$ if its distributional derivative can be represented by a $\mathbb{R}^{n}$-valued finite Radon measure, which is denoted by $Du$. This means that
\[\int_{\Omega}u\,\partial_{i}\phi~ dx=-\int_{\Omega}\phi ~d D_{i}u,\;\; \forall \phi\in C_{c}^{1}(\Omega), \;i=1,\ldots,n.\]
for some $\mathbb{R}^{n}$-valued finite Radon measure $Du=(D_{1}u,\ldots,D_{n}u)$. The absolutely continuous part of $Du$ with respect to Lebesgue measure $\mathcal{L}^{n}$ is denoted by $\nabla u$.
It is immediate that $W^{1,1}(\Omega)$$\subseteq$ $BV(\Omega)$ since if $u\in W^{1,1}(\Omega)$ then $Du=\nabla u \mathcal{L}^{n}$. Consistently, we say that a function $u=(u^{1},\ldots,u^{m})\in [L^{1}(\Omega)]^{m}$ belongs to $[BV(\Omega)]^{m}$ if 
\[\int_{\Omega}u^{a}\partial_{i}\phi ~dx=-\int_{\Omega}\phi~ d D_{i}u^{a}, \quad i=1,\ldots,n,\;a=1,\ldots,m. \]
In that case $Du$ is an $m\times n$ matrix-valued measure. A function u belongs to $ [BV(\Omega)]^{m}$ if and only if its variation in $\Omega$, $V(u,\Omega)$ is finite, where,
\begin{eqnarray*}
V(u,\Omega)=\sup\Bigg\{ \sum_{a=1}^{m}\int_{\Omega}u^{a}\text{div}\phi^{a}~dx: \phi\in [C_{c}^{1}(\Omega)]^{mn}, \; \|\phi\|_{\infty}\le1 \Bigg \}.
\end{eqnarray*}
Moreover if $u\in [BV(\Omega)]^{m}$ then $|Du|(\Omega)=V(u,\Omega)$ and if $u\in [W^{1,1}(\Omega)]^{m}$, then $|Du|(\Omega)=\int_{\Omega}|\nabla u|dx$, where $|\nabla u|=\left(\sum_{a=1}^{m}\sum_{i=1}^{n} (\partial_{i}u^{a})^{2}\right )^{1/2}$.  The space $[BV(\Omega)]^{m}$ endowed with the norm $\|u\|_{BV(\Omega)}:=\int_{\Omega}|u|~dx+|Du|(\Omega)$ is a Banach space.
It can be shown that if $Du=0$, then $u$ is equal to a constant a.e. in any connected component of $\Omega$.

Suppose that $(u_{k})_{k\in\mathbb{N}}$, $u$ belong to $[BV(\Omega)]^{m}$. We say that the sequence $(u_{k})_{k\in \mathbb{N}}$ converges to $u$ \emph{weakly}$^{\ast}$ in $[BV(\Omega)]^{m}$ if it converges to $u$ in $[L^{1}(\Omega)]^{m}$ and the sequence of measures $(Du_{k})_{k\in\mathbb{N}}$ converges weakly$^{\ast}$ to the measure $Du$. It is known that $(u_{k})_{k\in \mathbb{N}}$ converges to $u$ weakly$^\ast$ in  $[BV(\Omega)]^{m}$ if and only if $(u_{k})_{k\in \mathbb{N}}$ is bounded in $[BV(\Omega)]^{m}$ and converges to $u$ in $[L^{1}(\Omega)]^{m}$. 
The usefulness of the introduction of the weak$^\ast$ convergence is revealed in the following compactness result: Suppose that the sequence $(u_{k})_{k\in \mathbb{N}}$ is bounded in $[BV(\Omega)]^{m}$, where $\Omega$ is a bounded open set of $\mathbb{R}^{n}$ with Lipschitz boundary. Then there exists a subsequence $(u_{k_{\ell}})_{\ell\in\mathbb{N}}$ and a function $u\in [BV(\Omega)]^{m}$ such that $(u_{k_{\ell}})_{\ell\in\mathbb{N}}$ converges to $u$ weakly$^{\ast}$ in $[BV(\Omega)]^{m}$.

We say that the sequence $(u_{k})_{k\in \mathbb{N}}$ converges to $u$ \emph{strictly} in $[BV(\Omega)]^{m}$ if it converges to $u$ in $[L^{1}(\Omega)]^{m}$ and  $(|Du_{k}|(\Omega))_{k\in\mathbb{N}}$ converges to $|Du|(\Omega)$. It is immediate that strict convergence implies weak$^{\ast}$ convergence. 

Suppose that $\Omega$ is bounded  with Lipschitz boundary.  Define $1^{\ast}=n/(n-1)$
when $n>1$ and $1^{\ast}=\infty$ when $n=1$. Then $BV(\Omega)\subseteq L^{1^{\ast}}(\Omega)$ with continuous embedding. Moreover if $\Omega$ is connected then the following inequality holds (Poincar\'e-Wirtinger):
\[\|u-u_{\Omega}\|_{L^{1^{\ast}}(\Omega)}\le C|Du|(\Omega), \quad \forall u\in BV(\Omega),\]
where the constant $C$ depends only on $\Omega$ and  $u_{\Omega}$ denotes the mean value of $u$ in $\Omega$:
 \[u_{\Omega}:=\frac{1}{|\Omega|}\int_{\Omega}u~dx.\]
 We refer the reader to \cite{AmbrosioBV} for a detailed description of the above as well as for an introduction to weak continuity and differentiability notions in $BV(\Omega)$  and the decomposition of the distributional derivative of a function $u\in BV(\Omega)$.

\subsection{Relaxed functionals}\label{relaxedsection} Suppose that $X$ is a set endowed with some topology $\tau$ and let $F:X\to \mathbb{R}\cup\{+\infty\}$. The \emph{relaxed functional} or otherwise called the \emph{lower semicontinuous envelope} 
of $F$ with respect to the topology $\tau$ is a functional $\overline{F}:X\to\mathbb{R}\cup\{+\infty\}$ defined as follows for every $x\in X$:
\begin{eqnarray*}
\overline{F}(x)=\sup\{G(x):\; G:X\to \mathbb{R}\cup\{+\infty\},\;\tau\text{ lower semicontinuous, } G(y)\le F(y),\;\forall y\in X\}.
\end{eqnarray*}
It is easy to check that $\overline{F}$ is the greatest $\tau$ lower semicontinuous functional which is smaller or equal than $F$. It can be also checked that
\[\overline{F}(x)=\sup_{U\in \mathcal{N}(x)}\inf_{y\in U}F(y),\]
where $\mathcal{N}(x)$ denotes the open neighbourhoods of $x$. Moreover, if $X$ is a first countable topological space, then $\overline{F}(x)$ is characterised by the two following properties:
\begin{enumerate}
\item For every sequence $(x_{k})_{k\in \mathbb{N}}$ converging to $x$, we have
\[\overline{F}(x)\le \liminf_{k\to\infty}F(x_{k}).\]
\item There exists a sequence $(x_{k})_{k\in \mathbb{N}}$ converging to $x$ such that
\[\overline{F}(x)\ge \limsup_{k\to\infty}F(x_{k}).\]
\end{enumerate}
An interesting property of the relaxed functional is that if it has a minimum point then the value of $\overline{F}$ at that point will be equal with the infimum of $F$, i.e.,
\[\min_{x\in X}\overline{F}(x)=\inf_{x\in X}F(x).\]
For more information on relaxed functionals see  \cite{Braidesgamma} and \cite{dalmasogamma}.

\section{The variational formulation}\label{relax2order}
 In the current section we specify our definition of the functional \eqref{functional} that we want to minimise.  We start by defining the minimisation problem in the space $W^{2,1}(\Omega)$ as this is the space in which our analysis subsumes various choices for the regularisers $f$ and $g$. As this space is not reflexive, and thus existence of minimisers cannot be guaranteed, we extend the definition to a larger space. We introduce this larger space $BH(\Omega)$ as the subspace of all $u\in W^{1,1}(\Omega)$ such that $\nabla u\in [BV(\Omega)]^{m}$. We define the weak$^\ast$ and the strict topology of $BH(\Omega)$ and we identify the lower semicontinuous envelope (relaxed functional) of the extended functional with respect to these topologies. We prove existence of minimisers of the relaxed functional, uniqueness under some assumptions as well as stability.   

In the following $\Omega$  denotes as usual a bounded, connected, open subset of $\mathbb{R}^{2}$ with Lipschitz boundary, $T$  denotes a bounded linear operator from $L^{2}(\Omega)$ to $L^{2}(\Omega)$, $u_{0}\in L^{2}(\Omega)$ and $\alpha$, $\beta$ are non-negative constants. Further we suppose that $f:\mathbb{R}^{2}\to\mathbb{R}^{+}$, $g:\mathbb{R}^{4}\to\mathbb{R}^{+}$ are  convex functions with at most linear growth at infinity, i.e., there exist  positive constants $K_{1}$ and $K_{2}$ such that
\begin{eqnarray}
&&f(x)\le K_{1}(1+|x|),\quad \forall x\in\mathbb{R}^{2},\label{atmostlinear1}\\&& g(x)\le K_{2}(1+|x|),\quad \forall x\in\mathbb{R}^{4}.\label{atmostlinear2}
\end{eqnarray}
Moreover, we assume that both $f$ and $g$ are minimised in $0$ and they satisfy a coercivity condition:
\begin{eqnarray}
&&f(x)\ge K_{3}|x|,\quad \forall x\in\mathbb{R}^{2},\label{coercivity1}\\&& g(x)\ge K_{4}|x|,\quad \forall x\in\mathbb{R}^{4},\label{coercivity2}
\end{eqnarray}
where $K_{3}$ and $K_{4}$ are strictly positive.
We want to minimise the following functional:
\begin{eqnarray}\label{functional1}
H(u)=\frac{1}{2}\int_{\Omega}(u_{0}-Tu)^{2}~dx+\alpha\int_{\Omega}f(\nabla u)~dx+\beta\int_{\Omega}g(\nabla^{2}u)~dx.
\end{eqnarray}
The natural space for the functional $H$ to be defined in, is $W^{2,1}(\Omega)$. Since this space is not reflexive a solution of the minimisation problem by the direct method of calculus of variations does not work. Rather, existence of a minimiser of (\ref{functional1}) can be shown via relaxation that is: We extend the functional $H$ into a larger space which has some useful compactness properties with respect to some topology and we identify the relaxed functional with respect to the same topology. This space is $BH(\Omega)$. 

\subsection{The space $BH(\Omega)$} The space $BH(\Omega)$ (often denoted with $BV^{2}(\Omega)$) is the space of functions of bounded Hessian. It was introduced by Demengel in \cite{demengelBH} and consists of all functions $u\in W^{1,1}(\Omega)$ whose distributional Hessian can be represented by an $\mathbb{R}^{2}\times \mathbb{R}^{2}$-valued finite Radon measure. In other words:
\[BH(\Omega)=\{u\in W^{1,1}(\Omega):\; \nabla u\in [BV(\Omega)]^{2}\}.\]
We set $D^{2}u:=D(\nabla u)$. Again it is immediate that $W^{2,1}(\Omega)\subseteq BH(\Omega)$. $BH(\Omega)$ is a Banach space equipped with the norm $\|u\|_{BH(\Omega)}=\|u\|_{BV(\Omega)}+|D^{2}u|(\Omega)$. If $\Omega$ has a Lipschitz boundary and it is connected then it can be shown that there exist positive constants $C_{1}$ and $C_{2}$ such that
\begin{equation}\label{BHinterp}
\int_{\Omega}|\nabla u|~dx\le C_{1}|D^{2}u|(\Omega)+C_{2}\int_{\Omega}|u|~dx,\;\; \forall u\in BH(\Omega).
\end{equation}
Moreover, the embedding from $BH(\Omega)$ into $W^{1,1}(\Omega)$ is compact, see \cite{demengelBH}.
We denote the approximate differential of $\nabla u$ with $\nabla^{2}u$, see \cite{AmbrosioBV} for a definition.

Analogously with $BV(\Omega)$ we define the following notions of convergence in $BH(\Omega)$:
\newtheorem{first}{Definition}[section]
\begin{first}[\textbf{Weak$^\ast$ convergence in $\mathbf{BH(\Omega)}$}]\label{weakBH}
Let $(u_{k})_{k\in\mathbb{N}},\;u$  belong to $BH(\Omega)$. We say that $(u_{k})_{k\in\mathbb{N}}$ converges to $ u$ weakly$^\ast$ in $BH(\Omega)$ if 
\[u_{k}\to u,\quad \text{in }L^{1}(\Omega)\] and \[ \nabla u_{k}\rightharpoonup \nabla u\quad \text{weakly}^{\ast}\text{ in }[BV(\Omega)]^{2},\quad \text{as }k\to\infty,\]
or in other words
\[\|u_{k}-u\|_{L^{1}(\Omega)}\to 0,\]\[ \|\nabla u_{k}-\nabla u\|_{[L^{1}(\Omega)]^{2}}\to0,\]\[ \int_{\Omega}\phi~ dD^{2}u_{k}\to \int_{\Omega}\phi~ dD^{2}u,\;\;\forall \phi\in C_{0}(\Omega).\]
\end{first}

It is not hard to check that a basis for that topology consists of the following sets:
\begin{eqnarray*}
U(v,F,\epsilon)&=&\Bigg \{u\in BH(\Omega):\; \|v-u\|_{L^{1}(\Omega)}+\|\nabla v-\nabla u\|_{[L^{1}(\Omega)]^{2}}+\left |\int_{\Omega}\phi_{i}~dD^{2}v-\int_{\Omega}\phi_{i}~dD^{2}u \right |<\epsilon,\;i\in F\Bigg \},
\end{eqnarray*}
where $v\in BH(\Omega) $, $F$ is finite, $\epsilon>0$ and $\phi_{i}\in C_{0}(\Omega)$. This topology is also metrizable, hence first countable. We do not imply here that $BH(\Omega)$ is the dual space of a Banach space but we name this convergence $weak^{\ast}$ to show the correspondence with the weak$^\ast$ convergence in $BV(\Omega)$. We have also the corresponding compactness result:
\newtheorem{BHcompact}[first]{Theorem}
\begin{BHcompact}[\textbf{Compactness in }$\mathbf{BH(\Omega)}$]\label{BHcompact}
Suppose that the sequence $(u_{k})_{k\in\mathbb{N}}$ is bounded in $BH(\Omega)$. Then there exists a subsequence $(u_{k_{\ell}})_{\ell\in \mathbb{N}}$ and a function $u\in BH(\Omega)$ such that $(u_{k_{\ell}})_{\ell\in\mathbb{N}}$ converges to $u$ weakly$^{\ast}$ in $BH(\Omega)$.
\end{BHcompact}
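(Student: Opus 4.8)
The plan is to reduce the statement to the weak$^{\ast}$ compactness in $[BV(\Omega)]^{2}$ recalled in Section \ref{preliminaries}, together with the compact embedding $BH(\Omega)\hookrightarrow W^{1,1}(\Omega)$, and then to check that the three convergence requirements in Definition \ref{weakBH} are met by the limit thus produced.

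First I would unwind the boundedness hypothesis: if $(u_{k})_{k}$ is bounded in $BH(\Omega)$, then $\|u_{k}\|_{L^{1}(\Omega)}$, $\|\nabla u_{k}\|_{[L^{1}(\Omega)]^{2}}=|Du_{k}|(\Omega)$ and $|D^{2}u_{k}|(\Omega)=|D(\nabla u_{k})|(\Omega)$ are all bounded. Consequently the sequence $(\nabla u_{k})_{k}$ is bounded in $[BV(\Omega)]^{2}$, since $\|\nabla u_{k}\|_{[BV(\Omega)]^{2}}=\|\nabla u_{k}\|_{[L^{1}(\Omega)]^{2}}+|D(\nabla u_{k})|(\Omega)$. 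Applying the compactness theorem for $[BV(\Omega)]^{m}$ ($\Omega$ is bounded with Lipschitz boundary, so it applies), I extract a subsequence, not relabelled, and a function $v\in[BV(\Omega)]^{2}$ with $\nabla u_{k}\rightharpoonup v$ weakly$^{\ast}$ in $[BV(\Omega)]^{2}$; in particular $\nabla u_{k}\to v$ in $[L^{1}(\Omega)]^{2}$. Using the compact embedding $BH(\Omega)\hookrightarrow W^{1,1}(\Omega)$ from \cite{demengelBH} (equivalently, $BV$-compactness applied to $(u_{k})$ itself), I pass to a further subsequence so that $u_{k}\to u$ in $L^{1}(\Omega)$ for some $u\in L^{1}(\Omega)$.

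The crucial step is to identify $v$ with $\nabla u$ and to conclude $u\in BH(\Omega)$. For every $\phi\in C_{c}^{1}(\Omega)$ and $i=1,2$ one has $\int_{\Omega}u_{k}\,\partial_{i}\phi\,dx=-\int_{\Omega}(\partial_{i}u_{k})\phi\,dx$; letting $k\to\infty$ and using the two $L^{1}$ convergences just obtained yields $\int_{\Omega}u\,\partial_{i}\phi\,dx=-\int_{\Omega}v_{i}\phi\,dx$. Hence $u$ has an $L^{1}$ distributional gradient, i.e.\ $u\in W^{1,1}(\Omega)$ with $\nabla u=v$; since $v\in[BV(\Omega)]^{2}$, this means precisely $u\in BH(\Omega)$, and moreover $D^{2}u=D(\nabla u)=Dv$.

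It then remains to verify that $u_{k}\to u$ weakly$^{\ast}$ in $BH(\Omega)$ in the sense of Definition \ref{weakBH}. The first requirement, $u_{k}\to u$ in $L^{1}(\Omega)$, holds by construction; the second, $\|\nabla u_{k}-\nabla u\|_{[L^{1}(\Omega)]^{2}}\to 0$, follows from $\nabla u_{k}\to v=\nabla u$ in $[L^{1}(\Omega)]^{2}$; the third, $\int_{\Omega}\phi\,dD^{2}u_{k}\to\int_{\Omega}\phi\,dD^{2}u$ for all $\phi\in C_{0}(\Omega)$, is exactly the statement that $D(\nabla u_{k})\rightharpoonup D(\nabla u)$ weakly$^{\ast}$ as measures, which is part of $\nabla u_{k}\rightharpoonup\nabla u$ weakly$^{\ast}$ in $[BV(\Omega)]^{2}$. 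I expect the only genuine subtlety to be the identification step above — checking that the $L^{1}$-limit of the gradients is truly the weak gradient of the $L^{1}$-limit, so that $u$ lands in $W^{1,1}(\Omega)$ and not merely in $BV(\Omega)$; everything else is a matter of chaining the two compactness results together and bookkeeping the successive subsequences.
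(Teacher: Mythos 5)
Your proof is correct and follows essentially the same route as the paper: extract a subsequence via $BV$-compactness applied to $(\nabla u_{k})$ together with the compact embedding $BH(\Omega)\hookrightarrow W^{1,1}(\Omega)$, identify the limit of the gradients with $\nabla u$, and verify the three conditions of weak$^{\ast}$ convergence. The only cosmetic difference is that the paper obtains $\nabla u_{k_{\ell}}\to\nabla u$ in $[L^{1}(\Omega)]^{2}$ directly from convergence in $W^{1,1}(\Omega)$ and concludes $v=\nabla u$ by uniqueness of $L^{1}$ limits, whereas you carry out the identification by an integration-by-parts argument; both are valid.
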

\begin{proof}
From the compact embedding of $BH(\Omega)$ into $W^{1,1}(\Omega)$ and the fact that the sequence $(\nabla u_{k})_{k\in\mathbb{N}}$ is bounded in $[BV(\Omega)]^{2}$ we have that there exists a subsequence $(u_{k_{\ell}})_{\ell\in\mathbb{N}}$, a function $u\in W^{1,1}(\Omega)$ and a function $v\in [BV(\Omega)]^{2}$ such that $(u_{k_{\ell}})_{\ell\in\mathbb{N}}$ converges to $u$ in $W^{1,1}(\Omega)$ and $(\nabla u_{k_{\ell}})_{\ell\in\mathbb{N}}$ converges to $v$ weakly$^{\ast}$ in $[BV(\Omega)]^{2}$, as $\ell$ goes to infinity. Then, $\nabla u=v$, $u\in BH(\Omega)$ and $(u_{k_{\ell}})_{\ell\in\mathbb{N}}$ converges to $u$ weakly$^\ast$ in $BH(\Omega)$.
 \end{proof}

\newtheorem{strictBH}[first]{Definition}
\begin{strictBH}[\textbf{Strict convergence in $\mathbf{BH}$}]
Let $(u_{k})_{k\in\mathbb{N}}$ and $u$  belong to $BH(\Omega)$. We  say that $(u_{k})_{k\in\mathbb{N}}$ converges to $u$ strictly in $BH(\Omega)$ if
\[u_{k}\to u,\quad \text{in }L^{1}(\Omega)\] and \[|D^{2}u_{k}|(\Omega)\to |D^{2}u|(\Omega),\quad\text{as }k\to\infty.\]
\end{strictBH}
\noindent It is easily checked that the function 
\[d(u,v)=\int_{\Omega}|u-v|~dx+\left ||D^{2}u|(\Omega)-|D^{2}v|(\Omega) \right |,\]
 is a metric and induces the strict convergence in $BH(\Omega)$. The following Lemma can be used to compare these two topologies.

\newtheorem{interpolation}[first]{Lemma}
\begin{interpolation}\label{interpolation}
 Suppose that $(u_{k})_{k\in \mathbb{N}}$, $u^{\ast}$ belong to $BH(\Omega)$ and $(u_{k})_{k\in\mathbb{N}}$ converges to $u^{\ast}$ strictly in $BH(\Omega)$. Then
\[\|u_{k}-u^{\ast}\|_{W^{1,1}(\Omega)}\to 0, \quad\text{as }k\to\infty.\]
\end{interpolation}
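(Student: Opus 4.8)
The plan is to deduce the one missing piece, namely that $\|\nabla u_{k}-\nabla u\|_{[L^{1}(\Omega)]^{2}}\to 0$ (the $L^{1}$-convergence $u_{k}\to u$ being part of the hypothesis), from the compact embedding $BH(\Omega)\hookrightarrow W^{1,1}(\Omega)$ together with a routine subsequence argument. The strict convergence enters only through the observation that $|D^{2}u_{k}|(\Omega)$, being a convergent sequence of real numbers, is bounded.

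First I would check that $(u_{k})_{k\in\mathbb{N}}$ is bounded in $BH(\Omega)$. Indeed, $\|u_{k}\|_{L^{1}(\Omega)}$ is bounded because $u_{k}\to u$ in $L^{1}(\Omega)$, and $|D^{2}u_{k}|(\Omega)$ is bounded because it converges to $|D^{2}u|(\Omega)$. Since $u_{k}\in W^{1,1}(\Omega)$ we have $Du_{k}=\nabla u_{k}\,\mathcal{L}^{2}$, hence $|Du_{k}|(\Omega)=\int_{\Omega}|\nabla u_{k}|\,dx$, and the interpolation inequality recalled above, applied to $w=u_{k}$, gives $\int_{\Omega}|\nabla u_{k}|\,dx\le C_{1}|D^{2}u_{k}|(\Omega)+C_{2}\int_{\Omega}|u_{k}|\,dx$, which is bounded. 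Therefore $\|u_{k}\|_{BH(\Omega)}=\|u_{k}\|_{L^{1}(\Omega)}+|Du_{k}|(\Omega)+|D^{2}u_{k}|(\Omega)$ is bounded.

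Next I would argue by contradiction. Suppose $\|u_{k}-u\|_{W^{1,1}(\Omega)}\not\to 0$; then there exist $\delta>0$ and a subsequence $(u_{k_{j}})_{j\in\mathbb{N}}$ with $\|u_{k_{j}}-u\|_{W^{1,1}(\Omega)}\ge\delta$ for every $j$. Since $(u_{k_{j}})_{j}$ is bounded in $BH(\Omega)$ and the embedding $BH(\Omega)\hookrightarrow W^{1,1}(\Omega)$ is compact, a further subsequence, not relabelled, converges in $W^{1,1}(\Omega)$ to some $v\in W^{1,1}(\Omega)$; in particular it converges to $v$ in $L^{1}(\Omega)$, and since it also converges to $u$ in $L^{1}(\Omega)$ we obtain $v=u$. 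This contradicts $\|u_{k_{j}}-u\|_{W^{1,1}(\Omega)}\ge\delta$, and the lemma follows.

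The only genuine subtlety — and what rules out a one-line argument — is that strict convergence in $BH(\Omega)$ does \emph{not} entail $|D^{2}u_{k}-D^{2}u|(\Omega)\to 0$, so one cannot conclude directly by applying the interpolation inequality to $u_{k}-u$. The compact embedding, which is precisely where the extra order of differentiability of $BH(\Omega)$ over $BV(\Omega)$ is exploited, supplies the compactness needed to bypass this obstacle.
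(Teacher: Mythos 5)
Your proposal is correct and follows essentially the same route as the paper: boundedness in $BH(\Omega)$ via the interpolation inequality together with the boundedness of $\|u_{k}\|_{L^{1}(\Omega)}$ and $|D^{2}u_{k}|(\Omega)$, then the compact embedding $BH(\Omega)\hookrightarrow W^{1,1}(\Omega)$ and identification of the limit through the $L^{1}$-convergence. The only cosmetic difference is that you phrase the final step as a contradiction with a subsequence bounded away from $u$, whereas the paper extracts from every subsequence a further subsequence converging to $u$; these are the same argument.
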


\begin{proof}
We recall from (\ref{BHinterp}) that there exist  positive constants $C_{1}$ and $C_{2}$ such that
\[\int_{\Omega}|\nabla u|~dx\le C_{1}|D^{2}u|(\Omega)+C_{2}\int_{\Omega}|u|~dx,\;\; \forall u\in BH(\Omega).\]
Since the sequence $(u_{k})_{k\in\mathbb{N}}$ is strictly convergent in $BH(\Omega)$, the sequences $(\|u_{k}\|_{L^{1}(\Omega)})_{k\in\mathbb{N}}$ and $(|D^{2}u_{k}|(\Omega))_{k\in\mathbb{N}}$ are bounded. Hence, there exists a positive constant $C$ such that
\[\int_{\Omega}|\nabla u_{k}|~dx<C,\quad \forall k \in \mathbb{N},\]
which implies that the sequence $(u_{k})_{k\in\mathbb{N}}$ is bounded in $BH(\Omega)$. From the compact embedding of $BH(\Omega)$ into $W^{1,1}(\Omega)$ we get that there exists a subsequence $(u_{k_{\ell}})_{\ell\in\mathbb{N}}$ and a function $v\in W^{1,1}(\Omega)$ such that $(u_{k_{\ell}})_{\ell\in\mathbb{N}}$ converges to $v$ in $W^{1,1}(\Omega)$. In particular $(u_{k_{\ell}})_{\ell\in\mathbb{N}}$ converges to $v$ in $L^{1}(\Omega)$ so $v=u^{\ast}$ and thus $(u_{k_{\ell}})_{\ell\in\mathbb{N}}$ converges to $u^{\ast}$ in $W^{1,1}(\Omega)$. Since every subsequence of $(u_{k})_{k\in\mathbb{N}}$ is bounded in $BH(\Omega)$ we can repeat the same argument and deduce that for every subsequence of $(u_{k})_{k\in\mathbb{N}}$ there exists a further subsequence which converges to $u^{\ast}$ in $W^{1,1}(\Omega)$. This proves that the initial sequence $(u_{k})_{k\in\mathbb{N}}$ converges to $u^{\ast}$ in $W^{1,1}(\Omega)$.
\end{proof}

\newtheorem{cor}[first]{Corollary}
\begin{cor}
Strict convergence implies weak$^{\ast}$ convergence in $BH(\Omega)$.
\end{cor}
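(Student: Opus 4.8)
\emph{Proof plan.} Unravelling the definitions, to show that a strictly convergent sequence $u_k \to u$ in $BH(\Omega)$ also converges weakly$^\ast$ I must verify the three requirements of Definition \ref{weakBH}: that $u_k \to u$ in $L^1(\Omega)$, that $\nabla u_k \to \nabla u$ in $[L^1(\Omega)]^2$, and that $\int_\Omega \phi\, dD^2 u_k \to \int_\Omega \phi\, dD^2 u$ for every $\phi \in C_0(\Omega)$. The first is part of the very definition of strict convergence, and the second is precisely the content of Lemma \ref{interpolation}. Thus the only point that needs an argument is the weak$^\ast$ convergence of the Hessian measures $D^2 u_k$ to $D^2 u$.

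For this I would first record that, exactly as in the proof of Lemma \ref{interpolation}, strict convergence forces $(u_k)_{k\in\mathbb{N}}$ to be bounded in $BH(\Omega)$: the numerical sequences $\|u_k\|_{L^1(\Omega)}$ and $|D^2 u_k|(\Omega)$ converge and hence are bounded, and the interpolation inequality $\int_\Omega|\nabla u|\,dx \le C_1|D^2 u|(\Omega) + C_2\int_\Omega|u|\,dx$ recalled in that proof then bounds $\|\nabla u_k\|_{[L^1(\Omega)]^2}$. Granting this, one can conclude in either of two ways. The quick way: take an arbitrary subsequence of $(u_k)$; by the compactness Theorem \ref{BHcompact} it admits a further subsequence $(u_{k_\ell})_\ell$ converging weakly$^\ast$ in $BH(\Omega)$ to some $w \in BH(\Omega)$, hence in particular $u_{k_\ell}\to w$ in $L^1(\Omega)$; since also $u_{k_\ell}\to u$ in $L^1(\Omega)$ we get $w=u$, so $D^2 u_{k_\ell}\rightharpoonup D^2 u$ weakly$^\ast$ along this subsequence. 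As every subsequence thus has a further subsequence along which $D^2 u_{k_\ell}\rightharpoonup D^2 u$, and the weak$^\ast$ topology on $BH(\Omega)$ is metrizable, the whole sequence satisfies $D^2 u_k \rightharpoonup D^2 u$, which is what we want. The hands-on alternative: for $\phi \in C_c^1(\Omega)$, integration by parts gives $\int_\Omega \phi\, d(D^2 u_k)_{ij} = -\int_\Omega \partial_j \phi\, \partial_i u_k\, dx$, which converges to $-\int_\Omega \partial_j\phi\, \partial_i u\, dx = \int_\Omega \phi\, d(D^2 u)_{ij}$ because $\nabla u_k \to \nabla u$ in $L^1(\Omega)$; one then upgrades this to arbitrary $\phi \in C_0(\Omega)$ by a density argument, using the uniform bound $\sup_k |D^2 u_k|(\Omega) < \infty$.

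There is no genuinely hard step here; the only thing to be slightly careful about is the passage from a dense class of test functions (or from subsequences) to \emph{all} of $C_0(\Omega)$ (resp. to the full sequence), and in both routes this is exactly where the uniform bound on $|D^2 u_k|(\Omega)$ — available for free from strict convergence — is used. Everything else is an immediate consequence of Lemma \ref{interpolation} and, in the first route, Theorem \ref{BHcompact}.
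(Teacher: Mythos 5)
Your argument is correct and follows the route the paper intends: the corollary is stated as an immediate consequence of Lemma \ref{interpolation}, which supplies the $W^{1,1}(\Omega)$ convergence, and the remaining point is exactly the weak$^{\ast}$ convergence of the Hessian measures, which you obtain from the uniform bound on $|D^{2}u_{k}|(\Omega)$. The only simplification available is that your third step can be compressed to a single citation: since $(\nabla u_{k})_{k\in\mathbb{N}}$ is bounded in $[BV(\Omega)]^{2}$ and converges to $\nabla u$ in $[L^{1}(\Omega)]^{2}$, the characterisation of weak$^{\ast}$ convergence in $[BV(\Omega)]^{m}$ recalled in Section \ref{preliminaries} (boundedness plus $L^{1}$ convergence) already yields $\nabla u_{k}\rightharpoonup\nabla u$ weakly$^{\ast}$ in $[BV(\Omega)]^{2}$, making both your subsequence argument and the integration-by-parts/density argument unnecessary, though each is valid as written.
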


\subsection{Relaxation of the second order functional}

We now extend the functional $H$ in \eqref{functional1} to $BH(\Omega)$ in the following way:
\begin{equation}\label{functional2}
H_{ex}(u)=
\begin{cases}
\frac{1}{2}\int_{\Omega}(u_{0}-Tu)^{2}~dx+\alpha\int_{\Omega}f(\nabla u)~dx+\beta\int_{\Omega}g(\nabla^{2}u)~dx & \text{ if }u\in W^{2,1}(\Omega),\\
+\infty & \text{ if }f\in \;\; BH(\Omega)\setminus W^{2,1}(\Omega).
\end{cases}
\end{equation}
As we have discussed above, the weak$^{\ast}$ topology in $BH(\Omega)$  provides a good compactness property which is inherited from the weak$^{\ast}$ topology in $[BV(\Omega)]^{n}$.  However, the functional $H_{ex}$ is not sequentially lower semicontinuous with respect to the strict topology in $BH(\Omega)$ and hence it is neither with respect to the weak$^{\ast}$ topology in $BH(\Omega)$. Indeed, we can find a  function $u\in BH(\Omega)\setminus W^{2,1}(\Omega)$, see \cite{demengelBH} for such an example. Hence, from the definition of $H_{ex}$ we have $H_{ex}(u)=\infty$. However, according to Theorem \ref{demtem} we can find a sequence $(u_{k})_{k\in\mathbb{N}}$ in $W^{2,1}(\Omega)$ which converges strictly to $u$. It follows that the sequences $(\|u_{k}\|_{L^{1}(\Omega)})_{k\in\mathbb{N}}$, $(|D^{2}u_{k}|(\Omega))_{k\in\mathbb{N}}$ as well as $(\|\nabla u_{k}\|_{L^{1}(\Omega)})_{k\in\mathbb{N}}$ are bounded. Moreover the sequence $(u_{k})_{k\in\mathbb{N}}$ is bounded in $L^{2}(\Omega)$. Since $T$ is a bounded linear operator and from the fact that $f$ and $g$ have at most linear growth at infinity we deduce that the sequence $(H_{ex}(u_{k}))_{k\in\mathbb{N}}$ is bounded as well. Hence we get
\[H_{ex}(u)>\liminf_{k\to\infty}H_{ex}(u_{k}),\]
which proves that
 $H_{ex}$ is not lower semicontinuous with respect to the strict topology in $BH(\Omega)$. Thus, we have to identify its lower semicontinuous envelope with respect to the strict convergence. We define the following functional in $BH(\Omega)$ :
\begin{eqnarray*}\label{functional3}
\overline{H_{ex}}(u)&:=&\frac{1}{2}\int_{\Omega}(u_{0}-Tu)^{2}dx+\alpha\int_{\Omega}f(\nabla u)~dx+\beta \,g(D^{2}u)(\Omega)\\
&=&\frac{1}{2}\int_{\Omega}(u_{0}-Tu)^{2}dx+\alpha\int_{\Omega}f(\nabla u)~dx+\beta\int_{\Omega}g(\nabla^{2} u)~dx+\beta\int_{\Omega}g_{\infty}\left (\frac{D^{s}\nabla u}{|D^{s}\nabla u|} \right )d|D^{s}\nabla u|,
\end{eqnarray*}
where $\nabla^{2}u$, the approximate differential of $\nabla u$, is also the density of $D^{2}u$ with respect to the Lebesgue measure, see \cite{AmbrosioBV}.
It is immediate to see that if $u\in W^{2,1}(\Omega)$ then $\overline{H}_{ex}(u)=H_{ex}(u)$. Thus is general, $\overline{H}_{ex}$ is smaller than $H_{ex}$.

\newtheorem{lscsecond}[first]{Theorem}
\begin{lscsecond}\label{lsc2}
The functional  $\overline{H_{ex}}$ is lower semicontinuous with respect to the strict topology in $BH(\Omega)$. 
\end{lscsecond}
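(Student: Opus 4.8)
The plan is to split $\overline{H}_{ex}(u)=A(u)+B(u)+C(u)$ into its three non-negative summands $A(u)=\frac{1}{2}\int_{\Omega}(u_{0}-Tu)^{2}\,dx$, $B(u)=\alpha\int_{\Omega}f(\nabla u)\,dx$ and $C(u)=\beta\,g(D^{2}u)(\Omega)$, and to show that each of $A$, $B$, $C$ is sequentially lower semicontinuous with respect to strict convergence in $BH(\Omega)$. Since a finite sum of non-negative sequentially lower semicontinuous functionals is again sequentially lower semicontinuous, and since the strict topology is metrised by $d$ (hence first countable, so that sequential lower semicontinuity coincides with lower semicontinuity), the assertion follows. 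So fix a sequence $(u_{k})_{k\in\mathbb{N}}$ converging to $u$ strictly in $BH(\Omega)$.

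First I would handle $A$, which is in fact continuous. By Lemma \ref{interpolation}, strict convergence gives $u_{k}\to u$ in $W^{1,1}(\Omega)$; since $n=2$ the Sobolev embedding $W^{1,1}(\Omega)\hookrightarrow L^{2}(\Omega)$ is continuous (here $1^{\ast}=2$), hence $u_{k}\to u$ in $L^{2}(\Omega)$, and boundedness of $T$ on $L^{2}(\Omega)$ yields $Tu_{k}\to Tu$ in $L^{2}(\Omega)$, so $A(u_{k})\to A(u)$. For $B$, the same Lemma gives $\nabla u_{k}\to\nabla u$ in $[L^{1}(\Omega)]^{2}$; from an arbitrary subsequence I extract a further one along which $\nabla u_{k}\to\nabla u$ a.e., and then continuity and non-negativity of $f$ together with Fatou's lemma give $B(u)\le\liminf_{k}B(u_{k})$ along that subsequence, after which the usual subsequence argument yields the inequality for the full sequence. (Since $L^{1}$-convergence of $\nabla u_{k}$ makes $\{|\nabla u_{k}|\}$ equi-integrable and $f$ has at most linear growth, Vitali's theorem even gives $B(u_{k})\to B(u)$, but lower semicontinuity is all that is needed here.)

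The essential term is $C$. Strict convergence in $BH(\Omega)$ implies weak$^{\ast}$ convergence in $BH(\Omega)$ by the Corollary following Lemma \ref{interpolation}; in particular $D^{2}u_{k}\rightharpoonup D^{2}u$ weakly$^{\ast}$ in $[\mathcal{M}(\Omega)]^{4}$ (and $|D^{2}u_{k}|(\Omega)\to|D^{2}u|(\Omega)$). Since $g$ is convex with at most linear growth at infinity, Theorem \ref{BF} applied to the $\mathbb{R}^{4}$-valued measures $D^{2}u_{k}$ gives
\[
C(u)=\beta\,g(D^{2}u)(\Omega)\le\beta\liminf_{k\to\infty}g(D^{2}u_{k})(\Omega)=\liminf_{k\to\infty}C(u_{k}),
\]
where $g(D^{2}u_{k})(\Omega)$ is the quantity $\int_{\Omega}g(\nabla^{2}u_{k})\,dx+\int_{\Omega}g_{\infty}(D^{s}\nabla u_{k}/|D^{s}\nabla u_{k}|)\,d|D^{s}\nabla u_{k}|$ of \eqref{defconv}. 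Summing the estimates for $A$, $B$, $C$ gives $\overline{H}_{ex}(u)\le\liminf_{k\to\infty}\overline{H}_{ex}(u_{k})$. I expect the only real obstacle to be contained in Theorem \ref{BF} itself — the Reshetnyak-type lower semicontinuity for convex integrands of linear growth, in which the recession function $g_{\infty}$ accounts for the singular part of the Hessian — which is proved in Appendix \ref{appendixA}; within the present argument the one point that requires care is term $A$, where one must invoke the two-dimensional Sobolev embedding to upgrade the $W^{1,1}$-convergence furnished by strict convergence to $L^{2}$-convergence.
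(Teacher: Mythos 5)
Your proposal is correct and follows essentially the same route as the paper: the same three-term decomposition, the same Sobolev embedding $W^{1,1}(\Omega)\hookrightarrow L^{2}(\Omega)$ plus boundedness of $T$ for the fidelity term, and the same appeal to Theorem \ref{BF} for the Hessian term. The only (immaterial) difference is in the first-order term, where the paper exploits that a convex function of at most linear growth is Lipschitz to get outright continuity of $u\mapsto\int_{\Omega}f(\nabla u)\,dx$ under $L^{1}$-convergence of the gradients, whereas you obtain lower semicontinuity via a.e.\ convergent subsequences and Fatou — both arguments are valid and suffice.
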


\begin{proof}
It is not hard to check that since $f$ is convex and has at most linear growth then it is Lipschitz, say with constant $L>0$. Let $u$ and $(u_{k})_{k\in\mathbb{N}}$ be functions in $BH(\Omega)$ and let $(u_{k})_{k\in\mathbb{N}}$ converge to $u$ strictly in $BH(\Omega)$ and thus also weakly$^{\ast}$ in $BH(\Omega)$.  We have to show that
\[\overline{H_{ex}}(u)\le \liminf_{k\to\infty}\overline{H_{ex}}(u_{n}).\]
 From the definition of the weak$^{\ast}$ convergence in $BH(\Omega)$ we have that $(u_{k})_{k\in\mathbb{N}}$ converges to $u$ in $W^{1,1}(\Omega)$. From the Sobolev inequality, see \cite{EvansPDEs},
 \[\|v\|_{L^{2}(\Omega)}\le C\|v\|_{W^{1,1}(\Omega)},\quad \forall v\in W^{1,1}(\Omega),\]
we have that $(u_{k})_{k\in\mathbb{N}}$ converges to $u$ in $L^{2}(\Omega)$. Since $T:L^{2}(\Omega)\to L^{2}(\Omega)$ is continuous then the map $u\mapsto \frac{1}{2}\int_{\Omega}(u_{0}-Tu)^{2}dx$ is continuous and hence we have that
\begin{equation}\label{1over3}
\frac{1}{2}\int_{\Omega}(u_{0}-Tu_{k})^{2}dx\to \frac{1}{2}\int_{\Omega}(u_{0}-Tu)^{2}dx,\quad \text{as }k\to\infty.
\end{equation}
Moreover since $\|\nabla u_{k}-\nabla u\|_{[L^{1}(\Omega)]^{2}}$ converges to $0$ as $k\to\infty$, we have from the Lipschitz property
\[\left |\int_{\Omega}f(\nabla u_{k})~dx-\int_{\Omega}f(\nabla u)~dx \right |\le\]\[\int_{\Omega}\left |f(\nabla u_{k})-f(\nabla u)\right |~dx 
\le\]\[ L\int_{\Omega}|\nabla u_{k}-\nabla u|~dx\to 0,\quad \text{as }k\to\infty,\]
i.e., we have
\begin{equation}\label{2over3}
\int_{\Omega}f(\nabla u_{k})~dx\to \int_{\Omega}f(\nabla u)~dx,\quad \text{as }k\to\infty.
\end{equation}
Finally we have that $D^{2}u_{k}\to D^{2} u $ weakly$^{\ast}$. We can apply Theorem \ref{BF} for $\mu_{k}=\mu=\mathcal{L}^{2}$, $\nu=D^{2}u$, $\nu_{k}=D^{2}u_{k}$ and get that
\begin{equation}\label{3over3}
g(D^{2}u)(\Omega)\le \liminf_{k\to\infty}g(D^{2}u_{k})(\Omega).
\end{equation}
From (\ref{1over3}), (\ref{2over3}) and (\ref{3over3}) we get that
\[\overline{H_{ex}}(u)\le \liminf_{k\to\infty}\overline{H_{ex}}(u_{k}).\]
\end{proof}

\newtheorem{relaxed}[first]{Theorem}
\begin{relaxed}\label{relaxationresult}
The functional $\overline{H_{ex}}$ is the lower semicontinuous envelope of $H_{ex}$ with respect to the strict convergence in $BH(\Omega)$.
\end{relaxed}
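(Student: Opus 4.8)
We already know two things about $\overline{H_{ex}}$: it is lower semicontinuous for the strict convergence (Theorem \ref{lsc2}), and it satisfies $\overline{H_{ex}}(v)\le H_{ex}(v)$ for every $v\in BH(\Omega)$ (with equality on $W^{2,1}(\Omega)$ and $H_{ex}=+\infty$ elsewhere). To conclude that it is exactly the lower semicontinuous envelope of $H_{ex}$ it is therefore enough to verify the two sequential characterising properties of the relaxed functional recalled in Section \ref{preliminaries}. This is legitimate because the strict topology on $BH(\Omega)$ is induced by the metric $d$, hence is first countable. So the plan is: (1) show $\overline{H_{ex}}(u)\le\liminf_k H_{ex}(u_k)$ whenever $u_k\to u$ strictly in $BH(\Omega)$; and (2) for every $u\in BH(\Omega)$ exhibit a sequence $u_k\to u$ strictly in $BH(\Omega)$ with $\overline{H_{ex}}(u)\ge\limsup_k H_{ex}(u_k)$.

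\textbf{Property (1).} This is immediate from what is already available: if $u_k\to u$ strictly, then by Theorem \ref{lsc2} and $\overline{H_{ex}}\le H_{ex}$ we get $\overline{H_{ex}}(u)\le\liminf_k\overline{H_{ex}}(u_k)\le\liminf_k H_{ex}(u_k)$.

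\textbf{Property (2): the recovery sequence.} If $u\in W^{2,1}(\Omega)$ take $u_k\equiv u$, since then $\overline{H_{ex}}(u)=H_{ex}(u)$. If $u\in BH(\Omega)\setminus W^{2,1}(\Omega)$, I would invoke the smooth approximation result (Theorem \ref{demtem}) to obtain $u_k\in W^{2,1}(\Omega)$ with $u_k\to u$ strictly in $BH(\Omega)$, and then pass to the limit in $H_{ex}(u_k)$ term by term. For the fidelity term: strict convergence gives $u_k\to u$ in $W^{1,1}(\Omega)$ by Lemma \ref{interpolation}, hence in $L^2(\Omega)$ by the Sobolev embedding $\|v\|_{L^2(\Omega)}\le C\|v\|_{W^{1,1}(\Omega)}$, so continuity of $T$ yields $\tfrac12\int_\Omega(u_0-Tu_k)^2\,dx\to\tfrac12\int_\Omega(u_0-Tu)^2\,dx$. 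For the first order term: $f$ is Lipschitz, being convex with at most linear growth, and $\|\nabla u_k-\nabla u\|_{[L^1(\Omega)]^2}\to0$, so $\int_\Omega f(\nabla u_k)\,dx\to\int_\Omega f(\nabla u)\,dx$, exactly as in the proof of Theorem \ref{lsc2}.

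\textbf{The main obstacle: the second order term.} What remains, and is the delicate point, is to show $\int_\Omega g(\nabla^2 u_k)\,dx\to g(D^2u)(\Omega)=\int_\Omega g(\nabla^2 u)\,dx+\int_\Omega g_\infty\!\left(\tfrac{D^s\nabla u}{|D^s\nabla u|}\right)d|D^s\nabla u|$, i.e.\ the approximation must recover the singular contribution of $D^2u$. The inequality $g(D^2u)(\Omega)\le\liminf_k\int_\Omega g(\nabla^2 u_k)\,dx$ is free from Theorem \ref{BF} (applied with $\mu_k=\mu=\mathcal{L}^2$), just as in Theorem \ref{lsc2}; the real content is the matching $\limsup$ bound. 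The clean way is to pass to the $\mathbb{R}^5$-valued measures $\lambda_k=(\mathcal{L}^2,D^2u_k)$, $\lambda=(\mathcal{L}^2,D^2u)$ and to the positively $1$-homogeneous perspective extension $\tilde g$ of $g$ (so that $\tilde g(0,w)=g_\infty(w)$), for which, by the calculus of Section \ref{preliminaries}, $\tilde g(\lambda_k)(\Omega)=\int_\Omega g(\nabla^2 u_k)\,dx$ and $\tilde g(\lambda)(\Omega)=g(D^2u)(\Omega)$; a Reshetnyak-type continuity theorem then gives $\tilde g(\lambda_k)(\Omega)\to\tilde g(\lambda)(\Omega)$ provided $\lambda_k\to\lambda$ \emph{area-strictly}, i.e.\ $\int_\Omega\sqrt{1+|\nabla^2 u_k|^2}\,dx\to\int_\Omega\sqrt{1+|\nabla^2 u|^2}\,dx+|D^s\nabla u|(\Omega)$. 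This last property is genuinely stronger than strict convergence, so it is here that one must use the structure of the approximants from Theorem \ref{demtem}: for mollification-type approximants the Jensen inequality applied to $\tilde g$ gives $\int_\Omega g(\nabla^2 u_k)\,dx\le\int\bigl(g(D^2u)\bigr)*\rho_{1/k}$, whose right-hand side converges to $g(D^2u)(\Omega)$, the usual care near $\partial\Omega$ being exactly what the construction behind Theorem \ref{demtem} supplies; combined with the $\liminf$ bound this yields the required convergence. Putting the three limits together gives $H_{ex}(u_k)\to\overline{H_{ex}}(u)$, which is property (2), and with property (1) this identifies $\overline{H_{ex}}$ as the lower semicontinuous envelope of $H_{ex}$ for the strict convergence in $BH(\Omega)$.
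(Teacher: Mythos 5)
Your proof is correct and its skeleton coincides with the paper's: the $\liminf$ inequality follows from Theorem \ref{lsc2} together with $\overline{H_{ex}}\le H_{ex}$, and the recovery sequence is the smooth approximating sequence of Theorem \ref{demtem}, with the fidelity and first order terms passed to the limit exactly as in the proof of Theorem \ref{lsc2}. The one place where you diverge is the second order term, which you single out as ``the main obstacle'' and attack with a Reshetnyak-type continuity argument under area-strict convergence of $(\mathcal{L}^2,D^2u_k)$, backed up by Jensen's inequality for mollified measures. This work is sound, but it is unnecessary here: the statement of Theorem \ref{demtem} (Demengel--Temam) as recorded in Appendix \ref{appendixA} already asserts, as its third conclusion, that the approximants can be chosen so that $g(D^{2}u_{k})(\Omega)\to g(D^{2}u)(\Omega)$, and since $u_k\in W^{2,1}(\Omega)$ one has $g(D^2u_k)(\Omega)=\int_\Omega g(\nabla^2u_k)\,dx=$ the second order term of $H_{ex}(u_k)$. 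The paper simply reads the desired convergence off that statement. In effect you have sketched a proof of that third conclusion of Theorem \ref{demtem} (and your sketch is essentially the standard one behind it), which buys you self-containedness at the cost of length; citing the theorem as stated closes the argument in one line. Either way the conclusion $\overline{H_{ex}}(u)=\lim_k H_{ex}(u_k)$ follows and the envelope is identified.
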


\begin{proof}
Suppose that $(u_{k})_{k\in\mathbb{N}}$ converges to $u$ strictly in $BH(\Omega)$. From the lower semicontinuity of $\overline{H_{ex}}$ we have that $\overline{H_{ex}}(u)\le \liminf_{k\to\infty}\overline{H_{ex}}(u_{k})$ and since $\overline{H_{ex}}\le H_{ex}$ we get that $\overline{H_{ex}}(u)\le \liminf_{k\to\infty}H_{ex}(u_{k})$. For the other direction Theorem \ref{demtem} tells us that given $u\in BH(\Omega)$ there exist a sequence $(u_{k})_{k\in\mathbb{N}}\subseteq C^{\infty}(\Omega)\cap W^{2,1}(\Omega)$ such that $(u_{k})_{k\in\mathbb{N}}$ converges to $u$ strictly in $BH(\Omega)$ and
\begin{equation}\label{eq3}
g(D^{2}u_{k})(\Omega)\to g(D^{2}u)(\Omega).
\end{equation}
We have also that $(u_{k})_{k\in\mathbb{N}}$ converges to $u$ in $W^{1,1}(\Omega)$ which implies that
\begin{eqnarray}\label{eq4}
&&\frac{1}{2}\int_{\Omega}(u_{0}-Tu_{k})^{2}dx+\alpha \int_{\Omega}f(\nabla u_{k})~dx\to\nonumber \\&& \frac{1}{2}\int_{\Omega}(u_{0}-Tu)^{2}dx+\alpha \int_{\Omega}f(\nabla u)~dx,\\ &&\ \text{as }k\to\infty,\nonumber
\end{eqnarray}
as the proof of Theorem \ref{lsc2} shows.
From (\ref{eq3}) and (\ref{eq4}) we get that
\[\overline{H_{ex}}(u)= \lim_{k\to\infty}H_{ex}(u_{k}).\]
\end{proof}

 Observe that $\overline{H_{ex}}$ is also the lower semicontinuous envelope of $H_{ex}$ with respect to the weak$^{\ast}$ convergence in $BH(\Omega)$. 
 
Let us note here that in fact the relaxation result of Theorem \ref{relaxationresult} follows from a more general relaxation result in \cite{amar1994relaxation}. There, the authors solely assume $g$ to be  quasi-convex. However, since we consider convex functions the proof we give in this paper is simpler and more accessible to the non-specialist reader.

The proof of the following minimisation theorem follows the proof of the corresponding theorem in \cite{vese2001study} for the analogue first order functional. Here we denote with $\mathcal{X}_{\Omega}$ the characteristic function of $\Omega$, i.e., $\mathcal{X}_{\Omega}(x)=1$, for all $x\in\Omega$ and 0 otherwise. 
\newtheorem{existence}[first]{Theorem}
\begin{existence}\label{Theoremexistence}
Assuming $T(\mathcal {X}_{\Omega})\ne 0$, $\alpha>0$, $\beta>0$ then the minimisation problem
\begin{equation}\label{existence}
\inf_{u\in BH(\Omega)}\overline{H_{ex}}(u),
\end{equation}
has a solution $u\in BH(\Omega)$.
\end{existence}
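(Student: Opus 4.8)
The plan is to apply the direct method of the calculus of variations, using the compactness result Theorem \ref{BHcompact} together with the lower semicontinuity result Theorem \ref{lsc2}. The main work, as is typical, is to establish that minimising sequences are bounded in $BH(\Omega)$; once this coercivity is in hand, the rest is routine.

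\textbf{Step 1 (a minimising sequence exists and has finite energy).} Since $\overline{H_{ex}}\ge 0$, the infimum $m:=\inf_{u\in BH(\Omega)}\overline{H_{ex}}(u)$ is finite and nonnegative; taking $u\equiv 0$ shows $m\le \overline{H_{ex}}(0)=\frac12\|u_0\|_{L^2(\Omega)}^2<\infty$. Pick a minimising sequence $(u_k)_{k\in\mathbb{N}}$, so $\overline{H_{ex}}(u_k)\to m$; in particular $\sup_k \overline{H_{ex}}(u_k)=:M<\infty$.

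\textbf{Step 2 (uniform bounds).} From $\beta\,g(D^2u_k)(\Omega)\le M$ and the coercivity assumption \eqref{coercivity2}, namely $g(x)\ge K_4|x|$ (which passes to the recession function and hence to $g(D^2u_k)(\Omega)\ge K_4|D^2u_k|(\Omega)$), we get $|D^2u_k|(\Omega)\le M/(\beta K_4)$ uniformly. Similarly \eqref{coercivity1} gives $\int_\Omega |\nabla u_k|\,dx \le M/(\alpha K_3)$. It remains to bound $\|u_k\|_{L^1(\Omega)}$, equivalently $\|u_k\|_{L^2(\Omega)}$. Here I would mimic the argument in \cite{vese2001study}: write $u_k = (u_k)_\Omega + (u_k - (u_k)_\Omega)$. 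The oscillation part is controlled by the Poincaré--Wirtinger inequality, $\|u_k-(u_k)_\Omega\|_{L^2(\Omega)}\le C\|\nabla u_k\|_{L^1(\Omega)}$ (using the $BV$ embedding into $L^{1^*}=L^2$ in dimension $2$ applied to $\nabla u_k$... more directly, apply Poincaré--Wirtinger to $u_k\in BV(\Omega)$ since $|Du_k|(\Omega)=\int_\Omega|\nabla u_k|\,dx$), hence uniformly bounded. For the mean value: the data fidelity term gives $\|u_0-Tu_k\|_{L^2(\Omega)}\le \sqrt{2M}$, so $\|Tu_k\|_{L^2}$ is bounded; writing $Tu_k = (u_k)_\Omega\, T(\mathcal X_\Omega) + T(u_k-(u_k)_\Omega)$ and using boundedness of $T$ together with the already-established bound on $\|u_k-(u_k)_\Omega\|_{L^2}$, we deduce $|(u_k)_\Omega|\,\|T(\mathcal X_\Omega)\|_{L^2}$ is bounded; since $T(\mathcal X_\Omega)\ne 0$ by hypothesis, $(u_k)_\Omega$ is bounded. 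Combining, $(u_k)$ is bounded in $L^2(\Omega)$, hence in $L^1(\Omega)$, and therefore bounded in $BH(\Omega)$.

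\textbf{Step 3 (extraction and conclusion).} By Theorem \ref{BHcompact} there is a subsequence $(u_{k_\ell})$ and $u\in BH(\Omega)$ with $u_{k_\ell}\to u$ weakly$^*$ in $BH(\Omega)$. By the Remark following Theorem stating $\overline{H_{ex}}$ is the relaxed functional, $\overline{H_{ex}}$ is lower semicontinuous with respect to weak$^*$ convergence in $BH(\Omega)$ (this is essentially Theorem \ref{lsc2}: inspecting its proof, the convergences \eqref{1over3}, \eqref{2over3}, \eqref{3over3} only used weak$^*$ convergence in $BH(\Omega)$, via the implied $W^{1,1}$ convergence and the weak$^*$ convergence of the Hessians). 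Hence
\[
\overline{H_{ex}}(u)\le \liminf_{\ell\to\infty}\overline{H_{ex}}(u_{k_\ell}) = m,
\]
and since $u\in BH(\Omega)$, also $\overline{H_{ex}}(u)\ge m$. Therefore $\overline{H_{ex}}(u)=m$ and $u$ is a minimiser.

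The main obstacle is Step 2, and specifically the control of the mean value $(u_k)_\Omega$: this is exactly where the hypothesis $T(\mathcal X_\Omega)\ne 0$ is needed, and the argument requires carefully separating the constant component of $u_k$ from its oscillation and exploiting that $T$ does not annihilate constants. Everything else — nonnegativity, extraction of a weakly$^*$ convergent subsequence, lower semicontinuity — follows directly from results already established in the excerpt.
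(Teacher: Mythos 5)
Your proposal is correct and follows essentially the same route as the paper: coercivity of $f$ and $g$ to bound $\int_\Omega|\nabla u_k|\,dx$ and $|D^2u_k|(\Omega)$, Poincar\'e--Wirtinger to control the oscillation of $u_k$, the fidelity term together with $T(\mathcal X_\Omega)\ne 0$ to control the mean value, then compactness in $BH(\Omega)$ and lower semicontinuity of $\overline{H_{ex}}$ with respect to weak$^*$ convergence. Your explicit remarks that the coercivity of $g$ passes to the recession function and that the lower semicontinuity proof only needs weak$^*$ (not strict) convergence are accurate and merely make explicit what the paper leaves implicit.
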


\begin{proof}
Let $(u_{k})_{k\in\mathbb{N}}$ be a minimising sequence for (\ref{existence}) and let $C>0$ be an upper bound for $(\overline{H_{ex}}(u_{k}))_{k\in\mathbb{N}}$. We have that
\begin{equation}\label{bound1}
\int_{\Omega}f(\nabla u_{k})~dx<C\quad \text{and}\quad \frac{1}{2}\int_{\Omega}(u_{0}-Tu_{k})^{2}dx<C,
\end{equation}
for every $k\in\mathbb{N}$. 
From the coercivity assumptions \eqref{coercivity1}-\eqref{coercivity2} and   from (\ref{bound1}) we have
\begin{equation}\label{bound2}
|Du_{k}|(\Omega)=\int_{\Omega}|\nabla u_{k}|~dx  < C,\quad \forall k\in \mathbb{N},
\end{equation}
for a possibly different constant $C$.
 We show that the sequence $(u_{k})_{k\in\mathbb{N}}$ is bounded in $L^{2}(\Omega)$, following essentially \cite{vese2001study}.
 By the  Poincar\'e-Wirtinger inequality there exists a positive constant $C_{1}$ such that for every $k\in\mathbb{N}$
 \begin{eqnarray*}
\|u_{k}\|_{L^{2}(\Omega)}&=&\Bigg\|u_{k}-\mathcal{X}_{\Omega}\frac{1}{|\Omega|}\int u_{k}~dx +\mathcal{X}_{\Omega}\frac{1}{|\Omega|}\int u_{k}~dx\Bigg\|_{L^{2}(\Omega)} \\&\le& C_{1} |Du_{k}|(\Omega)+\left |\int_{\Omega}u_{k}~dx \right |\\&\le& C+\left |\int_{\Omega}u_{k}~dx \right |.
\end{eqnarray*}
Thus it suffices to bound $\left|\int_{\Omega}u_{k}~dx\right|$ uniformly in $k$. We have for every $k\in \mathbb{N}$
\begin{eqnarray*}
\left \|T\left(\mathcal{X}_{\Omega}\frac{1}{|\Omega|}\int u_{k}~dx \right )  \right \|_{L^{2}(\Omega)}&\le&
\left \|T\left(\mathcal{X}_{\Omega}\frac{1}{|\Omega|}\int u_{k}~dx \right ) -Tu_{k}\right\|_{L^{2}(\Omega)}+\left\|Tu_{k}-u_{0}\right\|_{L^{2}(\Omega)}+\|u_{0}  \|_{L^{2}(\Omega)}\\
&\le&\|T\|\left\| u_{k}-\mathcal{X}_{\Omega}\frac{1}{|\Omega|}\int u_{k}~dx  \right\|_{L^{2}(\Omega)}+\left\|Tu_{k}-u_{0}\right\|_{L^{2}(\Omega)}+\|u_{0}  \|_{L^{2}(\Omega)}\\
&\le&C_{1}\|T\||Du_{k}|(\Omega)+\sqrt{2C}+\|u_{0}  \|_{L^{2}(\Omega)}\\
&\le&C_{1}\|T\|C+\sqrt{2C}+\|u_{0}  \|_{L^{2}(\Omega)}:=C'.
\end{eqnarray*}
It follows that 
 \[\left |\int_{\Omega}u_{k}~dx\right|\|T(\mathcal{X}_{\Omega})\|_{L^{2}(\Omega)}\le C'|\Omega|,\]
 and thus
\[\left |\int_{\Omega}u_{k}~dx\right|\le\frac{C'|\Omega|}{\|T(\mathcal{X}_{\Omega})\|_{L^{2}(\Omega)}},\]
since $T(\mathcal{X}_\Omega)\ne0$.

 Since the sequence is bounded in $L^{2}(\Omega)$ and $\Omega$ is bounded, we have that the sequence is bounded in $L^{1}(\Omega)$ and moreover it is bounded in $BH(\Omega)$. From  Theorem \ref{BHcompact} we obtain the existence of a subsequence $(u_{k_{\ell}})_{\ell\in\mathbb{N}}$ and $u\in BH(\Omega)$ such that $(u_{k_{\ell}})_{\ell\in\mathbb{N}}$ converges to $u$ weakly$^{\ast}$ in $BH(\Omega)$. Since the functional $\overline{H_{ex}}$ is lower semicontinuous with respect to this convergence we have:
 \[\overline{H_{ex}}(u)\le \liminf_{k\to\infty}\overline{H_{ex}}(u_{k})\]
 which implies that
 \[u=\min_{u\in BH(\Omega)}\overline{H_{ex}}(u).\]

\end{proof}

Let us note here that in the above proof we needed $\alpha>0$, in order to get an a priori bound in the $L^{1}$ norm of the gradient (for $\beta=0$ see \cite{vese2001study}). However, the proof goes through if $\alpha=0$ and $T$ is injective. If $T$ is not injective and $\alpha=0$ it is not straightforward how to get existence. 
The proof of the following theorem also follows the proof of the corresponding theorem for the first order analogue in \cite{vese2001study}.
\newtheorem{uniqueness}[first]{Proposition}
\begin{uniqueness}\label{uniqueness}
If, in addition to $T(\mathcal{X}_{\Omega})\ne 0$,  $T$ is injective or if $f$ is strictly convex, then the solution of the minimisation problem (\ref{existence}) is unique.
\end{uniqueness}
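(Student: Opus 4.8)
The plan is to exploit convexity of $\overline{H_{ex}}$ together with the standing hypotheses of Theorem \ref{Theoremexistence}, namely $\alpha,\beta>0$ and $T(\mathcal{X}_\Omega)\ne 0$, under which a minimiser exists. First I would record that $\overline{H_{ex}}$ is a (real-valued) convex functional on the vector space $BH(\Omega)$: the fidelity term $u\mapsto\tfrac12\int_\Omega(u_0-Tu)^2\,dx$ is convex since $T$ is linear and $t\mapsto t^2$ is convex; the term $u\mapsto\alpha\int_\Omega f(\nabla u)\,dx$ is convex because $f$ is convex and $u\mapsto\nabla u$ is linear; and $u\mapsto\beta\,g(D^2u)(\Omega)$ is convex because $u\mapsto D^2u$ is linear and $\mu\mapsto g(\mu)(\Omega)$ is a convex functional on $[\mathcal{M}(\Omega)]^4$, as recalled in Section \ref{preliminaries} (see Proposition \ref{convexmeasure} and the subsequent discussion of convex functions of measures).

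Then, following the argument of the corresponding first-order statement in \cite{vese2001study}, suppose $u_1,u_2$ are two minimisers of \eqref{existence} with common minimal value $m$, and set $u=\tfrac12(u_1+u_2)\in BH(\Omega)$. Convexity of each of the three terms gives $\overline{H_{ex}}(u)\le\tfrac12\overline{H_{ex}}(u_1)+\tfrac12\overline{H_{ex}}(u_2)=m$, while $m\le\overline{H_{ex}}(u)$ by minimality; hence equality holds throughout, and since each of the three associated Jensen gaps is nonnegative, each one must vanish. Applying the elementary identity $\tfrac12\|a\|_{L^2(\Omega)}^2+\tfrac12\|b\|_{L^2(\Omega)}^2-\|\tfrac{a+b}{2}\|_{L^2(\Omega)}^2=\tfrac14\|a-b\|_{L^2(\Omega)}^2$ with $a=u_0-Tu_1$ and $b=u_0-Tu_2$ to the vanishing of the fidelity gap yields $\tfrac14\|Tu_1-Tu_2\|_{L^2(\Omega)}^2=0$, i.e. $Tu_1=Tu_2$ in $L^2(\Omega)$.

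At this point the two cases diverge. If $T$ is injective, $Tu_1=Tu_2$ immediately gives $u_1=u_2$. If instead $f$ is strictly convex, I would use the vanishing of the second Jensen gap, $\int_\Omega\big[\tfrac12 f(\nabla u_1)+\tfrac12 f(\nabla u_2)-f\big(\tfrac12(\nabla u_1+\nabla u_2)\big)\big]\,dx=0$; the integrand is nonnegative by convexity, hence equal to zero $\mathcal{L}^2$-a.e.\ in $\Omega$, and strict convexity of $f$ forces $\nabla u_1=\nabla u_2$ $\mathcal{L}^2$-a.e. Since $u_1,u_2\in W^{1,1}(\Omega)$ this means $D(u_1-u_2)=\nabla(u_1-u_2)\mathcal{L}^2=0$, so by connectedness of $\Omega$ we get $u_1-u_2\equiv c$ for some constant $c$; then $0=Tu_1-Tu_2=c\,T(\mathcal{X}_\Omega)$, and the assumption $T(\mathcal{X}_\Omega)\ne 0$ forces $c=0$, i.e. $u_1=u_2$.

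There is no deep difficulty here; the argument is essentially the convexity argument of \cite{vese2001study}. The only points requiring genuine care are the convexity of the Hessian term $u\mapsto g(D^2u)(\Omega)$ as a functional on $BH(\Omega)$ — which rests on the convexity of $g(\cdot)$ on measures recalled in the preliminaries — and the passage from "the integral of a nonnegative convex Jensen gap vanishes" to "the arguments coincide pointwise a.e.", which is where strict convexity of $f$ (or injectivity of $T$) enters. I would also flag explicitly that the hypothesis $T(\mathcal{X}_\Omega)\ne 0$ is needed only in the strictly-convex-$f$ case, to eliminate the additive constant.
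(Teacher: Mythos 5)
Your proposal is correct and follows essentially the same route as the paper: convexity of $\overline{H_{ex}}$ via Proposition \ref{convexmeasure}, strict convexity of the quadratic fidelity term to force $Tu_1=Tu_2$, then either injectivity of $T$ or strict convexity of $f$ to force $\nabla u_1=\nabla u_2$, followed by connectedness of $\Omega$ and $T(\mathcal{X}_\Omega)\ne 0$ to kill the additive constant. The only cosmetic difference is that you phrase the argument via vanishing Jensen gaps at the midpoint while the paper argues by contradiction from strict inequality; these are the same argument.
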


\begin{proof}
Using the Proposition \ref{convexmeasure} in Appendix \ref{appendixA} we can check that the functional $\overline{H_{ex}}$ is convex. Let $u_{1}$, $u_{2}$ be two minimisers. If $T(u_{1})\ne T(u_{2})$ then from the strict convexity of the first term of $\overline{H_{ex}}$ we have 
\[\overline{H_{ex}}\left (\frac{1}{2}u_{1} +\frac{1}{2}u_{2} \right )<\frac{1}{2}\overline{H_{ex}}(u_{1})+\frac{1}{2}\overline{H_{ex}}(u_{2})=\inf_{u\in BH(\Omega)}\overline{H_{ex}}(u),\]
which is a contradiction. Hence $T(u_{1})= T(u_{2})$ and if $T$ is injective we have $u_{1}=u_{2}$. If $T$ is not injective but $f$ is strictly convex then we must have $\nabla u_{1}=\nabla u_{2}$ otherwise we get the same contradiction as before. In that case, since $\Omega$ is connected, there exists a constant $c$ such that $u_{1}=u_{2}+c\mathcal{X}_{\Omega}$ and since $T(\mathcal{X}_{\Omega})\ne 0$, we get $c=0$.
\end{proof}

\subsection{Stability}
To complete the well-posedness picture for \eqref{functional} it remains to analyse the stability of the method. More precisely, we want to know which effect deviations in the data $u_0$ have on a corresponding minimiser of \eqref{functional}. Ideally the deviation in the minimisers for different input data should be bounded by the deviation in the data. Let $R$ be the regularising functional in \eqref{functional}, i.e.,
$$
R(u) = \alpha\int_\Omega f(\nabla u)~ dx + \beta\int_\Omega g(\nabla^2 u)~ dx.
$$
It has been demonstrated by many authors \cite{burger2004convergence,burger2007error,poschl2008tikhonov,benning2011error} that Bregman distances related to the regularisation functional $R$ are natural error measures for variational regularisation methods with $R$ convex. In particular P\"oschl \cite{poschl2008tikhonov} has derived estimates for variational regularisation methods with powers of metrics, which apply to the functional we consider here. However, for demonstration issues and to make constants in the estimates more explicit let us state and prove the result for a special case of \eqref{functional} here. 

We consider functional \eqref{functional} for the case $s=2$. For what we are going to do we assume that one of the regularisers is differentiable. Without loss of generality, we assume that $f(s)$ is differentiable in $s$. The analogous analysis can be done if $g(s)$ is differentiable or even under weaker, continuity conditions on $f$ and $g$, see \cite{bredies2011}. Let $\tilde u$ be the original image and $\tilde u_0$ the exact datum (without noise), i.e. $\tilde u_0$ is a solution of $T\tilde u=\tilde u_0$. We assume that the noisy datum $u_0$ deviates from the exact datum by $\|\tilde u_0-u_0\|_{L^2}\leq \delta$ for a small $\delta>0$. For the original image $\tilde u$ we assume that the following condition, called source condition, holds
\begin{equation}
\tag{SC}
\textrm{There exists a } \xi\in\partial R(\tilde u) \textrm{ such that } \xi = T^* q
 \textrm{ for a source element } q\in D(T^*),
 \label{eq:SC}
\end{equation}
where $D(T^*)$ denotes the domain of the operator $T^*$ and $\partial R$ is the subdifferential of $R$. Moreover, since $f$ is differentiable and both $f$ and $g$ are convex, the subdifferential of $R$ can be written as the sum of subdifferentials of the two regularisers, i.e.,
\begin{align*}
\partial R(u) &= \alpha \partial \left(\int_\Omega f(\nabla u)~ dx\right) + \beta \partial\left(\int_\Omega g(D^2 u)\right)\\
& = -\alpha \mathrm{div}(f'(\nabla u)) + \beta \partial\left(\int_\Omega g(D^2 u)\right),
\end{align*}
see \cite{ekeland1999convex}[Proposition 5.6., pp. 26]. We define the symmetric Bregman distance for the regularising functional $R$ as
\begin{equation*}
D_R^{symm}(u_1,u_2) := \langle p_1 - p_2, u_1-u_2 \rangle, 
\quad p_1\in\partial R(u_1), \;  p_2\in\partial R(u_2).
\end{equation*}
\newtheorem{stability}[first]{Theorem}
\begin{stability}
Let $\tilde u$ be the original image with source condition \eqref{eq:SC} satisfying $T\tilde u = \tilde u_0$. Let $u_0\in L^2(\Omega)$ be the noisy datum with $\|\tilde u_0-u_0\|_{L^2}\leq \delta$. Then a minimiser $u$ of \eqref{functional} fulfils
\begin{equation*}
\alpha D^{symm}_{\int_\Omega f(\nabla\cdot)}(u,\tilde u) + \beta D^{symm}_{\int_\Omega g(\nabla^2\cdot)}(u,\tilde u) + \frac{1}{2}\|Tu-\tilde u_0\|_{L^2}^2
 \leq  2\alpha^{2} \|q_\nabla\|_{L^2}^2 + 2\beta^{2} \|q_{\nabla^2}\|_{L^2}^2 + \delta^2,
\end{equation*}
where the source element $q$ is decomposed in $q=\alpha q_\nabla + \beta q_{\nabla^2}$. Moreover, let $u_1$ and $u_2$ be minimisers of \eqref{functional} with data $u_{0,1}$ and $u_{0,2}$ respectively. Then the following estimate is true
\begin{eqnarray}
&&\alpha D^{symm}_{\int_\Omega f(\nabla\cdot)}(u_1,u_2) + \beta D^{symm}_{\int_\Omega g(\nabla^2\cdot)}(u_1,u_2) + \frac{1}{2}\|T(u_1-u_2)\|_{L^2}^2 \leq \frac{1}{2}\|u_{0,1}- u_{0,2}\|_{L^2}^2.\label{istrue}
\end{eqnarray}
\end{stability}
\begin{proof}
The optimality condition for \eqref{functional} for $s=2$ and differentiable $f$ reads
\[
\alpha p_\nabla + \beta p_{\nabla^2} + T^*(Tu-u_0) = 0,
 \]
\[
\quad p_\nabla = -\mathrm{div}(f'(\nabla u)), \; p_{\nabla^2}\in \partial\left(\int_\Omega g(\nabla^2 u)\right).
\]
Adding $\xi$ from \eqref{eq:SC} and $T^*\tilde u_0$ we get
$$
\alpha p_\nabla + \beta p_{\nabla^2} - \xi + T^*(Tu-\tilde u_0) = T^*((u_0-\tilde u_0)-q).
$$
Then we use that $\xi=\alpha\xi_\nabla + \beta \xi_{\nabla^2}$ and take the duality product with $u-\tilde u$, which gives
\begin{equation*}
\alpha \langle p_\nabla-\xi_\nabla,u-\tilde u \rangle + \beta \langle p_{\nabla^2}-\xi_{\nabla^2},u-\tilde u \rangle + \|Tu-\tilde u_0\|_{L^2}^2
 = \langle (u_0-\tilde u_0)-q,Tu-\tilde u_0\rangle.
\end{equation*}
By Young's inequality and the standard inequality $\frac{1}{2} (a+b)^2 \leq a^2 + b^2$ for $a,b\in\mathbb R$, we eventually get
\begin{equation*}
\alpha D^{symm}_{\int_\Omega f(\nabla\cdot)}(u,\tilde u) + \beta D^{symm}_{\int_\Omega g(\nabla^2\cdot)}(u,\tilde u) + \frac{1}{2}\|Tu-\tilde u_0\|_{L^2}^2
 \leq  2\alpha^2 \|q_\nabla\|_{L^2}^2 + 2\beta^2 \|q_{\nabla^2}\|_{L^2}^2 + \|u_0-\tilde u_0\|_{L^2}^2.
\end{equation*}

\noindent Similarly, we can derive the stability estimate by subtracting the optimality conditions for \eqref{functional} in $u_1$ and $u_2$ with datum $u_{0,1}$ and $u_{0,2}$, respectively and applying Young's inequality again, we get
\eqref{istrue}.
\end{proof}

\newtheorem{remarkstab}[first]{Remark}
\begin{remarkstab}
In the case $\alpha=\beta$ we obtain the classical form of Bregman error estimates, that is
\begin{equation*}
D^{symm}_{\int_\Omega f(\nabla\cdot)}(u,\tilde u) + D^{symm}_{\int_\Omega g(\nabla^2\cdot)}(u,\tilde u) + \frac{1}{2\alpha}\|Tu-\tilde u_0\|_{L^2}^2
 \leq  2\alpha \left(\|q_\nabla\|_{L^2}^2 +  \|q_{\nabla^2}\|_{L^2}^2\right) + \frac{1}{\alpha}\|u_0-\tilde u_0\|_{L^2}^2.
\end{equation*}
\end{remarkstab}

\section{Special cases and extensions}\label{specialsession}
In this section we introduce two more versions of the functional $\overline{H_{ex}}$, the anisotropic version and the version where the $L^{1}$ norm appears in the fidelity term.
\subsection{The anisotropic version}\label{unisosection}
We introduce the anisotropic version of  the functional $H$ in \eqref{functional1}. Note that when $f(x)=|x|$ , $g(x)=|x|$ then the relaxed functional $\overline{H_{ex}}$ is given by
$$
\frac{1}{2}\int_{\Omega}(u_{0}-Tu)^{2}dx+\alpha\int_{\Omega}|\nabla u| ~dx+\beta |D^2 u|{\Omega}.
$$
Its anisotropic analogue is defined for $f(x)=|x|_{1}$ and $g(x)=|x|_{1}$. In that case, the relaxed functional is given by
\begin{eqnarray}\label{anisofunctional}
F(u)&=&\frac{1}{2}\int_{\Omega} (u_{0}-Tu)^{2}~ dx+\alpha\int_{\Omega}(|u_{x}|+|u_{y}|)~dx \nonumber\\ &&+\beta \big(|D_{1}u_{x}|(\Omega)+|D_{2}u_{x}|(\Omega)+|D_{1}u_{y}|(\Omega)+|D_{2}u_{y}|(\Omega)\big ),
\end{eqnarray}
where $D_{i}$, $i=1,2$ denotes the distributional derivative with respect to $x$ and $y$ respectively.
 Since the functional $F$ is obtained for the above choice of $f$ and $g$, the following theorem holds as a special case of Theorem \ref{Theoremexistence}:

\newtheorem{anisotheorem}[first]{Theorem}
\begin{anisotheorem}\label{anisotheorem}
Assuming $T(\mathcal{X}_{\Omega})\ne 0$, the minimisation problem
\begin{equation}\label{minianiso}
\inf_{u\in BH(\Omega)}F(u),
\end{equation}
has a solution. If $T$ is injective then the solution is unique.
\end{anisotheorem}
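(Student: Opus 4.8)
The plan is to recognise $F$ as the relaxed functional $\overline{H_{ex}}$ of Section~\ref{relax2order} corresponding to the particular choice $f(x)=|x|_{1}$ on $\mathbb{R}^{2}$ and $g(x)=|x|_{1}$ on $\mathbb{R}^{4}$, and then to invoke Theorem~\ref{Theoremexistence} and Proposition~\ref{uniqueness}. First I would check that these $f$ and $g$ satisfy all the standing hypotheses of Section~\ref{relax2order}: both are norms, hence convex and minimised at $0$; from the elementary inequalities $|x|\le|x|_{1}\le\sqrt{2}\,|x|$ on $\mathbb{R}^{2}$ and $|x|\le|x|_{1}\le 2|x|$ on $\mathbb{R}^{4}$ one obtains the at-most-linear growth bounds \eqref{atmostlinear1}--\eqref{atmostlinear2} with $K_{1}=\sqrt{2}$, $K_{2}=2$, and the coercivity bounds \eqref{coercivity1}--\eqref{coercivity2} with $K_{3}=K_{4}=1$.

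Next I would verify that the general relaxation formula of Section~\ref{relax2order} specialises to \eqref{anisofunctional}. Since $g(x)=|x|_{1}$ is already positively homogeneous of degree one, its recession function satisfies $g_{\infty}=g$, so by the definition \eqref{defconv} the absolutely continuous and singular contributions in $\overline{H_{ex}}$ combine into $\beta\,g(D^{2}u)(\Omega)$. Because $g$ acts on a vector measure $\mu=(\mu^{1},\dots,\mu^{4})$ simply as $g(\mu)=|\mu^{1}|+\cdots+|\mu^{4}|$, we get $g(D^{2}u)(\Omega)=|D_{1}u_{x}|(\Omega)+|D_{2}u_{x}|(\Omega)+|D_{1}u_{y}|(\Omega)+|D_{2}u_{y}|(\Omega)$, exactly the last term of \eqref{anisofunctional}; likewise $\alpha\int_{\Omega}f(\nabla u)\,dx=\alpha\int_{\Omega}(|u_{x}|+|u_{y}|)\,dx$ because $\nabla u\in[W^{1,1}(\Omega)]^{2}$ has no singular part. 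Hence $F=\overline{H_{ex}}$ for this data.

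With these identifications, existence of a minimiser is an immediate consequence of Theorem~\ref{Theoremexistence} (and, in the case $\alpha=0$ with $T$ injective, of the remark following its proof), while uniqueness under the injectivity of $T$ follows from Proposition~\ref{uniqueness}. I would stress that the strict-convexity alternative in Proposition~\ref{uniqueness} is \emph{not} available here, since $|\cdot|_{1}$ is convex but not strictly convex; this is precisely why only the injective-$T$ version of uniqueness is asserted. There is no real obstacle in the argument: the only point I would spell out carefully is the coordinatewise identity $g(D^{2}u)(\Omega)=|D_{1}u_{x}|(\Omega)+|D_{2}u_{x}|(\Omega)+|D_{1}u_{y}|(\Omega)+|D_{2}u_{y}|(\Omega)$, which rests on the observation that $g(\mu)=\sum_{i}|\mu^{i}|$ when $g(x)=|x|_{1}$; everything else is a direct appeal to the results already proved in Section~\ref{relax2order}.
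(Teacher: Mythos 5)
Your proposal is correct and follows the same route as the paper, which simply observes that $F$ is the relaxed functional $\overline{H_{ex}}$ for the choice $f(x)=|x|_{1}$, $g(x)=|x|_{1}$ and invokes Theorem~\ref{Theoremexistence} (and Proposition~\ref{uniqueness} for uniqueness under injectivity of $T$). Your verification of the growth and coercivity constants, the identity $g_{\infty}=g$, and the coordinatewise decomposition of $g(D^{2}u)(\Omega)$ fills in details the paper leaves implicit, and your remark that $|\cdot|_{1}$ is not strictly convex correctly explains why only the injective-$T$ uniqueness clause is available here.
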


\subsection{$L^{1}$ fidelity term}\label{sectionL1}
We  consider here the case with the $L^{1}$ norm in the fidelity term, i.e.,
\begin{equation}\label{L1functional}
G(u)=\int_{\Omega}|u_{0}-Tu|~dx+\alpha\int_{\Omega}|\nabla u|~dx+\beta|D^{2}u|(\Omega),
\end{equation}
where  for simplicity we consider the case $f(x)=|x|$, $g(x)=|x|$. As it has been shown in \cite{nikolova04} and also studied in \cite{chanL1} and \cite{duvalL1}, the $L^1$ norm in the fidelity term leads to efficient restorations of images that have been corrupted by impulse  noise.
\newtheorem{L1min}[first]{Theorem}
\begin{L1min}
Assuming $T(\mathcal{X}_{\Omega})\ne 0$, the minimisation problem
\begin{equation}\label{miniL1}
\inf_{u\in BH(\Omega)}G(u),
\end{equation}
has a solution.
\end{L1min}
\begin{proof}
The proof is another application of the direct method of calculus of variation. Similarly with the proof of Theorem \ref{Theoremexistence} we show that  any minimising sequence is bounded in $L^{1}(\Omega)$. Hence it is bounded in $BH(\Omega)$. Thus we can extract a weakly$^{\ast}$ convergent subsequence in $BH(\Omega)$. Trivially, the functional is lower semicontinuous with respect to that convergence.
\end{proof}
Note that in this case the uniqueness of the minimiser cannot be guaranteed since the functional $G$ is not strictly convex anymore, even in the case where $T=Id$. The versions with general $f$ and $g$ of Theorem \ref{Theoremexistence} can be easily extended to the cases discussed in Sections 
\ref{unisosection} and \ref{sectionL1}.
\section{The numerical implementation}\label{numericssection}
In this section we work with the discretised version of the functional \eqref{functional1} and we discuss its numerical realisation by the so-called split Bregman technique \cite{goldstein2009split}. We start   by defining the discrete versions of $L^{1}$ and $L^{2}$ norms in  Section \ref{disset}. In Section \ref{seccon} we proceed with an introduction to the Bregman iteration which is used to solve constrained optimisation problems, an idea originated in \cite{OBG}. In \cite{goldstein2009split} the Bregman iteration  and an operator splitting technique (split Bregman) is used in order to solve the total variation minimisation problem. In the latter paper it was also proved that the iterates of the Bregman iteration converge to the solution of the constrained problem assuming that the iterates satisfy the constraint in a finite number of iterations. Here, we give a more general convergence result  where we do not use that assumption, see Theorem \ref{bregmanprop}. Finally in  Section \ref{Bregmanourcase} we describe how our problem is solved with the Bregman iteration, using the splitting procedure mentioned above.
\subsection{The discretised setting}\label{disset}
In this section we study the discretisation and minimisation of the functional \eqref{functional1}. In our numerical examples we consider  $f(x)=|x|$, $g(x)=|x|$ and  the data fidelity term is measured in the $L^{2}$  norm, i.e.,
\begin{equation}\label{functionaldenoising}
\overline{H_{ex}}(u)=\frac{1}{2}\int_{\Omega}(u_{0}-Tu)^{2}dx+\alpha\int_{\Omega}|\nabla u|~dx +\beta |D^{2}u|(\Omega).
\end{equation}
In order to discretise \eqref{functionaldenoising} we specify the corresponding discrete operators and norms that appear in the continuous functional.
 We  denote the discretised version of \eqref{functionaldenoising} with $J$.  In the discrete setting, $u$ is an element of $\mathbb{R}^{n\times m}$ and $T$ is a bounded linear operator from $\mathbb{R}^{n\times m}$ to $\mathbb{R}^{n\times m}$.  For $f(x)=|x|$ and $g(x)=|x|$, the discrete functional  $J$ is given by
\begin{equation}\label{functionaldiscrete}
J(u)=\frac{1}{2}\|u_{0}-Tu\|_{2}^{2}+\alpha \|\nabla u\|_{1}+\beta \|\nabla^{2} u\|_{1},
\end{equation}
where for every $u\in \mathbb{R}^{n\times m}$,  $v=(v_{1},v_{2})\in \left (\mathbb{R}^{n\times m}\right)^{2}$ and  $w=(w_{1},w_{2},w_{3},w_{4})\in \left (\mathbb{R}^{n\times m}\right)^{4}$ we define
\begin{align*}
\|u\|_{2}&:=\left (\sum_{i=1}^{n}\sum_{j=1}^{m} u(i,j)^{2}\right)^{1/2},\\
\|v\|_{2}&:=\left(\sum_{i=1}^{n}\sum_{j=1}^{m}v_{1}(i,j)^{2}+v_{2}(i,j)^{2}\right)^{1/2},\\
\|w\|_{2}&:=\Bigg(\sum_{i=1}^{n}\sum_{j=1}^{m}w_{1}(i,j)^{2}+w_{2}(i,j)^{2}+ w_{3}(i,j)^{2}+w_{4}(i,j)^{2}\Bigg)^{1/2},\\
\|v\|_{1}&:=\sum_{i=1}^{n}\sum_{j=1}^{m}\left (v_{1}(i,j)^{2}+v_{2}(i,j)^{2}\right)^{1/2},\\
\|w\|_{1}&:=\sum_{i=1}^{n}\sum_{j=1}^{m}\big(w_{1}(i,j)^{2}+w_{2}(i,j)^{2}+ w_{3}(i,j)^{2}+w_{4}(i,j)^{2}\big)^{1/2}.
\end{align*}
 For the  formulation of the discrete gradient and  Hessian operators we use  periodic boundary conditions and we follow \cite{wu2010augmented}. We  also refer the reader to \cite{mineipol} where  the form of the discrete differential operators is described in detail.  We define $\nabla$ and $\text{div}$ consistently with the continuous setting as adjoint operators and the same is done for  the Hessian $\nabla^{2}$ and its adjoint $\text{div}^{2}$.

In particular the first and second order divergence operators, $\text{div}$ and $\text{div}^{2}$, have the properties:
\[\text{div}:\left (\mathbb{R}^{n\times m}\right)^{2}\to \mathbb{R}^{n\times m} \quad \text{with}\quad-\text{div}(v)\cdot u=v\cdot \nabla u,\quad \forall u\in\mathbb{R}^{n\times m},\;v\in\left (\mathbb{R}^{n\times m}\right)^{2},\]
\[\text{div}^{2}:\left (\mathbb{R}^{n\times m}\right)^{4}\to \mathbb{R}^{n\times m}\quad \text{with}\quad \text{div}^{2}w\cdot u=w\cdot  \nabla^{2}u,\quad \forall u\in\mathbb{R}^{n\times m},\;w\in\left (\mathbb{R}^{n\times m}\right)^{4},\]
where the ``$\cdot$'' denotes the Euclidean inner product, if we consider $u$, $v$ and $w$ as large  vectors formed successively by the columns of the corresponding matrices.
\subsection{Constrained optimisation and the Bregman iteration}\label{seccon}
We now introduce the Bregman and the split Bregman iteration as a numerical method for the solution of \eqref{functionaldiscrete}. We would like to recall some basic aspects of the general theory of Bregman iteration before finishing this discussion with the convergence result in Theorem \ref{bregmanprop}. 

Suppose we have to solve the following constrained minimisation problem:
\begin{equation}\label{bregmancon}
\min_{u\in \mathbb{R}^{d}}E(u) \text{ such that } Au=b,
\end{equation}
 where the function $E$ is convex and $A$ is a linear map from $\mathbb{R}^{d}$ to $\mathbb{R}^{\ell}$. We transform the constrained minimisation problem (\ref{bregmancon}) into an unconstrained one, introducing a parameter $\lambda$: 
 \begin{equation} \label{bregmanuncon}
\sup_{\lambda}\min_{u\in \mathbb{R}^{d}} E(u)+\frac{\lambda}{2}\|Au-b\|_{2}^{2},
\end{equation}
where in order to satisfy the constraint $Au=b$ we have to let $\lambda$ increase to infinity.
Instead of doing that we perform the  Bregman iteration as it was proposed in \cite{OBG} and \cite{YOGD}:
\begin{center}
\line(1,0){230}
\end{center}

\begin{center}

\underline{\textbf{Bregman Iteration}  }\\
\end{center}
\begin{eqnarray}\label{miniter}
u_{k+1}&=&\underset{u\in\mathbb{R}^{d}}{\operatorname{argmin}}\;\;E(u)+\frac{\lambda}{2}\|Au-b_{k}\|_{2}^{2},\\
b_{k+1}&=&b_{k}+b-Au_{k+1}. \label{miniter2}
\end{eqnarray}
\begin{center}
\line(1,0){230}
\end{center}

In \cite{OBG}, assuming that \eqref{miniter} has a unique solution, the authors derive the following facts for the iterates $u_{k}$:
\begin{equation}\label{condition1}
\|Au_{k}-b\|_{2}\le \frac{M}{\sqrt{k-1}},\quad\text{for a fixed }M\ge0,\;k\ge2.
\end{equation}
\begin{equation}\label{condition2}
\sum_{k=1}^{\infty}\|Au_{k}-b\|_{2}^{2}<\infty,
\end{equation}
\begin{equation}\label{condition3}
\|Au_{k+1}-b\|_{2}^{2}\le \|Au_{k}-b\|_{2}^{2},\quad k\ge 1,
\end{equation}
and
\begin{equation}\label{condition4}
E(u_{k})<N\quad\text{for a constant }N\ge 0.
\end{equation}
The following theorem was proved in  \cite{goldstein2009split} in the case where the iterates of Bregman iteration satisfy the constraint in a finite number of iterations. In other words, it was proved that if for some iterate $u_{k_0}$ we have $Au_{k_{0}}=b$ then $u_{k_{0}}$ is a solution to the original constrained problem \eqref{bregmancon}. In the following theorem we give a  more general proof where we do not use that assumption, something that makes it a genuine contribution to the convergence theory of Bregman iteration.
\newtheorem{bregmanprop}[first]{Theorem}
\begin{bregmanprop}\label{bregmanprop}
Suppose that the constrained minimisation problem (\ref{bregmancon}) has a unique solution $u^{\ast}$. Moreover, suppose that the convex functional $E$ is  coercive and  \eqref{miniter} has a unique solution for every $k$. Then the sequence of the iterates $(u_{k})_{k\in \mathbb{N}}$ of Bregman iteration  converges to $u^{\ast}$.
\end{bregmanprop}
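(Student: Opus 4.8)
The plan is to run the direct method at the level of the iterates. First I would extract from Theorem~\ref{mini} everything that is needed: since $u^{\ast}$ is feasible, $H(u^{\ast})=\tfrac{\lambda}{2}\|Au^{\ast}-b\|_{2}^{2}=0$ and $H\ge 0$ everywhere, so $u^{\ast}$ is a global minimiser of $H$; hence \eqref{tomin} yields $H(u_{k})=\tfrac{\lambda}{2}\|Au_{k}-b\|_{2}^{2}\to 0$, i.e. $Au_{k}\to b$, while \eqref{bound} gives $\sup_{k}E(u_{k})<\infty$. By coercivity the sublevel set $\{E\le\sup_{k}E(u_{k})\}$ is bounded, so $(u_{k})_{k}$ is bounded in $\mathbb{R}^{d}$; along a subsequence $u_{k_{j}}\to\bar u$, and continuity of $A$ forces $A\bar u=b$, so $\bar u$ is feasible.

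The core of the argument is to show $E(\bar u)\le E(u^{\ast})$, which by uniqueness of the constrained minimiser gives $\bar u=u^{\ast}$. Writing the first-order optimality condition for \eqref{alg1two} and using the sum rule for the subdifferential (legitimate because $u\mapsto\tfrac{\lambda}{2}\|Au-b\|_{2}^{2}$ is everywhere differentiable) shows that the update \eqref{alg1three} is exactly the statement $p_{k+1}\in\partial E(u_{k+1})$; since $p_{0}=0$, we get $p_{k}\in\partial E(u_{k})$ for all $k\ge 1$. Telescoping \eqref{alg1three} also gives $p_{k}=-\lambda A^{\top}\big(\sum_{\nu=1}^{k}(Au_{\nu}-b)\big)\in\operatorname{Range}(A^{\top})$. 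Because $E:\mathbb{R}^{d}\to\mathbb{R}$ is finite and convex it is locally Lipschitz, so boundedness of $(u_{k_{j}})_{j}$ implies boundedness of $(p_{k_{j}})_{j}$. Testing the subgradient inequality at $u_{k_{j}}$ against the feasible point $u^{\ast}$,
\[
E(u^{\ast})\ \ge\ E(u_{k_{j}})+(p_{k_{j}},\,u^{\ast}-u_{k_{j}})\ =\ E(u_{k_{j}})+(p_{k_{j}},\,u^{\ast}-\bar u)+(p_{k_{j}},\,\bar u-u_{k_{j}}),
\]
and now the decisive point: $u^{\ast}-\bar u\in\ker A$ (difference of two feasible points), while $p_{k_{j}}\in\operatorname{Range}(A^{\top})=(\ker A)^{\perp}$, so the middle term vanishes identically; the last term tends to $0$ since $(p_{k_{j}})_{j}$ is bounded and $u_{k_{j}}\to\bar u$, and $E(u_{k_{j}})\to E(\bar u)$ by continuity of $E$. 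Letting $j\to\infty$ gives $E(u^{\ast})\ge E(\bar u)$, hence $\bar u=u^{\ast}$.

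To upgrade to convergence of the whole sequence, note that the reasoning above applies verbatim to any subsequence of $(u_{k})_{k}$: it is bounded, hence has a further subsequence converging to some feasible point, which the same computation identifies with $u^{\ast}$. Thus every subsequence of $(u_{k})_{k}$ has a sub-subsequence converging to $u^{\ast}$, which forces $u_{k}\to u^{\ast}$. The step I expect to be the real obstacle is precisely the identification $\bar u=u^{\ast}$ — more concretely, the observation that one can pass to the limit in the subgradient inequality with no control on $\|p_{k}\|$ beyond boundedness along the subsequence, thanks to the orthogonality $p_{k}\perp(u^{\ast}-\bar u)$; everything else (the bounds on $H(u_{k})$ and $E(u_{k})$, the membership $p_{k}\in\partial E(u_{k})$, local Lipschitz continuity of $E$) is either quoted from Theorem~\ref{mini} or routine.
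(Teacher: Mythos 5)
Your proof is correct, and the decisive step is handled by a genuinely different mechanism than the one in the paper. Both arguments share the same skeleton: feasibility of $u^{\ast}$ makes it a global minimiser of $H$, so \eqref{tomin} gives $Au_{k}\to b$ and \eqref{bound} plus coercivity gives boundedness of $(u_{k})_{k}$, after which one identifies every accumulation point with $u^{\ast}$ and concludes by the subsequence-of-subsequences trick. Where you diverge is in the identification $E(\bar u)\le E(u^{\ast})$. The paper stays with the primal formulation of Algorithm 2: it tests the minimisation property $E(u_{k_{\ell}})+\frac{\lambda}{2}\|Au_{k_{\ell}}-b_{k_{\ell}}\|_{2}^{2}\le E(u^{\ast})+\frac{\lambda}{2}\|Au^{\ast}-b_{k_{\ell}}\|_{2}^{2}$ against $u^{\ast}$, expands the right-hand side by the triangle inequality, and is then forced to control the product $\|Au_{k_{\ell}}-b\|_{2}\sum_{\nu=1}^{k_{\ell}}\|Au_{\nu}-b\|_{2}$ (since $b_{k_{\ell}}$ accumulates all past residuals), which it does via the summability estimate \eqref{summable} together with Kronecker's lemma (Lemma \ref{kronecker}). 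You instead work with the dual variable of Algorithm 1: $p_{k}\in\partial E(u_{k})$ (which is already the content of Theorem \ref{WPiterates}), the telescoped identity $p_{k}\in\operatorname{Range}(A^{\top})=(\ker A)^{\perp}$, boundedness of $p_{k_{j}}$ from local Lipschitz continuity of the finite convex function $E$, and the orthogonality $p_{k_{j}}\perp(u^{\ast}-\bar u)$ to kill the only term that is not obviously negligible in the subgradient inequality. Your route is leaner: it needs only $H(u_{k})\to 0$, not the rate $M/\sqrt{k-1}$ nor the summability of the residuals, and it dispenses with Kronecker's lemma entirely; it is essentially the standard augmented-Lagrangian/dual-ascent identification argument. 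The paper's route has the modest advantage of never invoking subdifferential calculus or the structure of $\operatorname{Range}(A^{\top})$, working purely from the variational inequality satisfied by the Algorithm 2 iterates. Both are complete; I see no gap in yours.
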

\begin{proof}
We have that the statements \eqref{condition1}-\eqref{condition4} hold.
Moreover, we have for every $k$,
\[b_{k}=b_{0}+\sum_{\nu=1}^{k}(b-Au_{\nu})\Rightarrow\]\[\|b_{k}\|_{2}\le\|b_{0}\|_{2}+\sum_{\nu=1}^{k} \|Au_{\nu}-b\|_{2}.\] 
From \eqref{condition4} and the coercivity of $E$ we get that the sequence $(\|u_{k}\|_{2})_{k\in\mathbb{N}}$ is bounded, say by a constant $C>0$. Thus, it suffices to show that every accumulation point of $(u_{k})_{k\in\mathbb{N}}$ is equal to $u^{\ast}$. Since $Au^{\ast}=b$, for every increasing sequence of naturals $(k_{\ell})_{\ell\in\mathbb{N}}$, we have that
\begin{eqnarray*}
\frac{\lambda}{2}\|Au^{\ast}-b_{k_{\ell}}\|_{2}^{2}&\le &\frac{\lambda}{2}\left(\|Au^{\ast}-Au_{k_{\ell}}\|_{2}+\|Au_{k_{\ell}}-b_{k_{\ell}}\|_{2} \right)^{2} \\
&=&\frac{\lambda}{2}\left ( \|Au_{k_{\ell}}-b\|_{2}+ \|Au_{k_{\ell}}-b_{k_{\ell}}\|_{2}\right )^{2}\\
							&\le&\frac{\lambda M^{2}}{2(k_{\ell}-1)}+\lambda\|Au_{k_{\ell}}-b\|_{2}\|Au_{k_{\ell}}-b_{k_{\ell}}\|_{2}+\frac{\lambda}{2}\|Au_{k_{\ell}}-b_{k_{\ell}}\|_{2}^{2}.
\end{eqnarray*}
Now because of the fact that 
\[E(u_{k_{\ell}})+\frac{\lambda}{2}\|Au_{k_{\ell}}-b_{k_{\ell}-1}\|_{2}^{2}\le E(u^{\ast})+\frac{\lambda}{2}\|Au^{\ast}-b_{k_{\ell}-1}\|_{2}^{2},\]
we get that
\begin{eqnarray}\label{bigineq}
E(u_{k_{\ell}})&\le& E(u^{\ast})+\frac{\lambda M^{2}}{2(k_{\ell}-1)}+\lambda\|Au_{k_{\ell}}-b\|_{2}\|Au_{k_{\ell}}-b_{k_{\ell}-1}\|_{2}\nonumber\\
			&\le&E(u^{\ast})+\frac{\lambda M^{2}}{2(k_{\ell}-1)}+\lambda\|Au_{k_{\ell}}-b\|_{2}\|Au_{k_{\ell}}\|_{2}+\lambda\|Au_{k_{\ell}}-b\|_{2}\|b_{k_{\ell}-1}\|_{2}\nonumber\\
			&\le& E(u^{\ast})+\frac{\lambda M^{2}}{2(k_{\ell}-1)}+\frac{\lambda M \|A\|\|u_{k_{\ell}}\|_{2}}{\sqrt{k_{\ell}-1}}+\lambda\|Au_{k_{\ell}}-b\|_{2}\|b_{k_{\ell}-1}\|_{2}\nonumber\\
			&\le&E(u^{\ast})+\frac{\lambda M^{2}}{2(k_{\ell}-1)}+\frac{\lambda M\|A\|C}{\sqrt{k_{\ell}-1}}+\lambda\|Au_{k_{\ell}}-b\|_{2}\left (\|b_{0}\|_{2}+\sum_{\nu=1}^{k_{\ell}}\|Au_{\nu}-b\|_{2}\right )\nonumber\\
			&\le&E(u^{\ast})+\frac{\lambda M^{2}}{2(k_{\ell}-1)}+\frac{\lambda M\|A\|C}{\sqrt{k_{\ell}-1}}
			+\frac{\lambda M\|b_{0}\|_{2}}{\sqrt{k_{\ell}-1}}
			+\lambda \|Au_{k_{\ell}}-b\|_{2}\sum_{\nu=1}^{k_{\ell}}\|Au_{\nu}-b\|_{2}.
\end{eqnarray}
 Suppose now that $(u_{k_{\ell}})_{\ell\in\mathbb{N}}$ converges to some $\tilde{u}$ as $\ell$ goes to infinity. Then taking into account \eqref{condition1}, $\tilde{u}$ also satisfies 
 \begin{equation}\label{whatiwant1}
 \|A\tilde{u}-b\|_{2}=0.
 \end{equation}
 Taking limits in \eqref{bigineq} and using Kronecker's lemma, see Lemma \ref{kronecker}, we have that the limit in the right hand side of \eqref{bigineq} is $E(u^{\ast})$. Thus, we have
 \[E(\tilde{u})\le E(u^{\ast}),\]
 and since $u^{\ast}$ is the solution of the constrained minimisation problem and \eqref{whatiwant1} holds, we have $\tilde{u}=u^{\ast}$. Since every accumulation point of the bounded sequence $(u_{k})_{k\in\mathbb{N}}$ is equal to $u^{\ast}$, we conclude that the whole sequence  converges to $u^{\ast}$.
\end{proof}
For more information about the use of Bregman iteration in $L^{1}$ regularised problems we refer the reader to \cite{goldstein2009split,OBG,YOGD}.  

\subsection{Numerical solution of our minimisation problem}\label{Bregmanourcase}
In this section, we explain how the  Bregman iteration \eqref{miniter}-\eqref{miniter2} together with an operator splitting technique can be used to implement numerically the minimisation of  functional \eqref{functionaldiscrete}. The idea originates from \cite{goldstein2009split} where such a procedure is applied to total variation minimisation and is given the name split Bregman algorithm. This iterative technique is equivalent to certain instances of combinations of the augmented Lagrangian method with classical operator splitting such as Douglas-Rachford, see \cite{setzer}. We also refer the reader to the paper of Benning, Brune, Burger and  M\"uller \cite{TGVbregman}, for applications of Bregman methods to higher-order regularisation models for image reconstruction.

Exemplarily, we present the resulting algorithm for the minimisation of J in \eqref{functionaldiscrete}, i.e., for $f(x)=|x|$, $g(x)=|x|$ and $L^2$ data fidelity term. 
Recall that we want to solve the following unconstrained minimisation problem:
\begin{equation}\label{minimisationstep1}
\min_{u\in \mathbb{R}^{n\times m}} \frac{1}{2}\|u_{0}-Tu\|_{2}^{2}+\alpha \|\nabla u\|_{1}+\beta \|\nabla^{2} u\|_{1}.
\end{equation}
The derivation of the split Bregman algorithm for solving \eqref{minimisationstep1} starts with the first  observation that the above minimisation problem  is equivalent to the following constrained minimisation  problem:
\begin{equation}\label{minimisationstep2}
\min_{\substack{u\in \mathbb{R}^{n\times m}  \\ v \in \left (\mathbb{R}^{n\times m}\right)^{2}  \\ w \in \left (\mathbb{R}^{n\times m}\right)^{4}}  }  \frac{1}{2}\|u_{0}-Tu\|_{2}^{2}+\alpha \|v\|_{1}+\beta \|w\|_{1}, \text{ such that } v=\nabla u,\; w= \nabla^{2}u.
\end{equation}
It is clear that since the gradient and the Hessian are linear operations,  the minimisation problem  (\ref{minimisationstep2}) can be reformulated  into the more general problem:
\begin{equation}\label{minimisationstep3}
\min_{\omega\in \mathbb{R}^{d}} E(\omega) \text{ such  that }A\omega=b,
\end{equation}
where $E:\mathbb{R}^{d}\to\mathbb{R^{+}}$ is convex, $A$ is a $d\times d$ matrix and $b$ is a vector of length $d=7nm$. 

It is easy to see that the iterative scheme of the type (\ref{miniter})-(\ref{miniter2}) that corresponds to the constraint minimisation problem (\ref{minimisationstep2}) is :

\begin{eqnarray}
(u^{k+1},v^{k+1},w^{k+1})&=& \underset{u,v,w}{\operatorname{argmin}}\;\;\frac{1}{2}\|u_{0}-Tu\|_{2}^{2}+\alpha \|v\|_{1}+\beta \|w\|_{1}+\frac{\lambda}{2}\|b_{1}^{k}+\nabla u-v\|_{2}^{2}\label{oursminiter1}\\&&+\frac{\lambda}{2}\|b_{2}^{k}+\nabla^{2}u-w\|_{2}^{2},\nonumber\\
b_{1}^{k+1}&=& b_{1}^{k}+\nabla u^{k+1}-v^{k+1}\label{oursminiter2},\\
 b_{2}^{k+1}&=&b_{2}^{k}+\nabla ^{2}u^{k+1}-w^{k+1}.
\label{oursminiter3}
\end{eqnarray}
where $b_{1}^{k+1}=(b_{1,1}^{k+1},b_{1,2}^{k+1})\in(\mathbb{R}^{n\times m})^{2}$ and $b_{2}^{k+1}=(b_{2,11}^{k+1},b_{2,22}^{k+1},b_{2,12}^{k+1},b_{2,21}^{k+1})\in(\mathbb{R}^{n\times m})^{4}$.

\newtheorem{rem}[first]{Remark}
\begin{rem}\label{rem}
Notice that at least in the case where $T$ is injective (denoising, deblurring), the minimisation problem (\ref{oursminiter1}) has a unique solution. Moreover in that case, the functional $E$ is  coercive and the constrained minimisation problem (\ref{minimisationstep2}) has a unique solution. Thus,  Theorem \ref{bregmanprop} holds.
\end{rem}

Our next concern is the efficient numerical solution of the minimisation  problem (\ref{oursminiter1}).
We follow \cite{goldstein2009split} and iteratively minimise with
respect to $u$, $v$ and $w$ alternatingly:
\begin{center}
\line(1,0){230}
\end{center}

\begin{center}

\underline{\textbf{Split Bregman for } $\mathbf{TV-TV^{2}-L^2}$ }\\
\end{center}

\begin{eqnarray}
u^{k+1}&=&\underset{u\in \mathbb{R}^{n\times
m}}{\operatorname{argmin}}\;\;
\frac{1}{2}\|u_{0}-Tu\|_{2}^{2}+\frac{\lambda}{2}\|b_{1}^{k}+\nabla u-v^{k}\|_{2}^{2}+\frac{\lambda}{2}\|b_{2}^{k}+\nabla^{2}u-w^{k}\|_{2}^{2},\label{step1}\\
v^{k+1}&=&\underset{v\in \left(\mathbb{R}^{n\times
m}\right)^{2}}{\operatorname{argmin}}\;\;\alpha\|v\|_{1}+\frac{\lambda}{2}\|b_{1}^{k}+\nabla u^{k+1}-v\|_{2}^{2},\label{step2}\\
w^{k+1}&=&\underset{w\in \left(\mathbb{R}^{n\times
m}\right)^{4}}{\operatorname{argmin}}\;\;\beta\|w\|_{1}+\frac{\lambda}{2}\|b_{2}^{k}+\nabla^{2}u^{k+1}-w\|_{2}^{2},\label{step3}\\
b_{1}^{k+1}&=& b_{1}^{k}+\nabla u^{k+1}-v^{k+1},\label{step4}\\
 b_{2}^{k+1}&=&b_{2}^{k}+\nabla ^{2}u^{k+1}-w^{k+1}.\label{step5}
\end{eqnarray}
\begin{center}
\line(1,0){230}
\end{center}
The above alternating minimisation scheme, make up the split Bregman iteration that is proposed in \cite{goldstein2009split} to solve the total variation minimisation problem as well as problems related to compressed sensing. For convergence properties of the split Bregman iteration and also other splitting techniques we refer the reader to \cite{combettes2006signal,esser2009general,setzer}.  In \cite{wu2010augmented} and \cite{YOGD}  it is noted that the Bregman iteration coincides with the augmented Lagrangian method. Minimising alternatingly with respect to the variables in the augmented Lagrangian method results to the alternating direction method of multipliers (ADMM), see \cite{Gabay1983299}. Thus split Bregman is equivalent to ADMM. In  \cite{Eckstein} and \cite{Gabay1983299}  it is shown that ADMM is equivalent to the  Douglas-Rachford splitting algorithm and thus convergence is guaranteed. We refer the reader to \cite{setzer} for an interesting study in 
this subject.

We now discuss how we solve each of the minimisation problems \eqref{step1}-\eqref{step3}.
The problem (\ref{step1}) is quadratic and it is solved through its optimality condition. This condition  reads:
\begin{eqnarray}\label{optcont}
&&T^{\ast}Tu-\lambda\text{div}\left (\nabla u\right)+\lambda \text{div}^{2}\left (\nabla ^{2} u\right)=T^{\ast}(u_{0})+\lambda \text{div}\left((b_{1}^{k}-v^{k})\right)-\lambda\text{div}^{2}\left((b_{2}^{k}-w^{k})\right),
\end{eqnarray}
where $T^{\ast}$ denotes the adjoint of the discrete operator $T$. Since all the operators that appear in (\ref{optcont}) are linear, this condition leads to a linear system of equations with $nm$ unknowns.  In \cite{goldstein2009split}, one iteration of the Gauss-Seidel method is used to approximate the solution of the corresponding optimality condition of \eqref{optcont}. However, numerical experiments have shown that in the higher-order case it is preferable and more robust  to solve this problem exactly.  Since we impose periodic boundary conditions for the discrete differential operators, this can be done efficiently using fast Fourier transform, see \cite{mineipol,wu2010augmented}.

The solutions of the minimisation problems (\ref{step2}) and (\ref{step3}) can be obtained exactly through a  generalised shrinkage method as it was done in \cite{goldstein2009split} and \cite{Wang_Yin_2007}. It is a simple computation to check that if $a\in\mathbb{R}^{n}$ then the solution to the  problem
\begin{equation}\label{basicshr}
\min_{x\in \mathbb{R}^{n}}\|x\|_{2}+\frac{\lambda}{2}\|x-a\|_{2}^{2},
\end{equation}
can be obtained through the following formula:
\begin{equation}\label{shformula}
x=\mathbb{S}_{\frac{1}{\lambda}}(a):=\max\left(\|a\|_{2}-\frac{1}{\lambda},0\right)\frac{a}{\|a\|_{2}}.
\end{equation}
Each of the objective functionals in (\ref{step2}) and (\ref{step3}) can be written as a sum of functionals of the same type in (\ref{basicshr}) where $n=2,\,4$ respectively. Thus the solution to the problems (\ref{step2}) and (\ref{step3}) can be computed as follows:
\begin{eqnarray}\label{s2}
v^{k+1}(i,j)&=&\left(v_{1}^{k+1}(i,j),v_{2}^{k+1}(i,j)\right)\nonumber\\&=&\mathbb{S}_{\frac{\alpha}{\lambda}}\left(b_{1}^{k}(i,j)+\nabla u^{k+1}(i,j)\right),
\end{eqnarray}
\begin{eqnarray}\label{s2}
w^{k+1}(i,j)&=&\big(w_{1}^{k+1}(i,j),w_{2}^{k+1}(i,j), w_{3}^{k+1}(i,j),w_{4}^{k+1}(i,j)\big)\nonumber\\&=&\mathbb{S}_{\frac{\alpha}{\lambda}}\left(b_{2}^{k}(i,j)+\nabla^{2} u^{k+1}(i,j)\right).
\end{eqnarray}
for $i=1,\ldots,n$ and $j=1,\ldots,m$.

Let us note that the algorithm \eqref{step1}-\eqref{step5} can be easily generalised to colour images, again see \cite{mineipol,wu2010augmented}.\\

We have performed numerical experiments for image denoising, deblurring and inpainting using the algorithm \eqref{step1}-\eqref{step5}.  In all of our numerical examples the range of image values is $[0,1]$ (zero for black  and one for white).

Notice that different values of $\lambda$ can result in different speeds of convergence. Also, one can consider having two parameters $\lambda_{1}$ and $\lambda_{2}$ for the first and the second order term respectively. We can easily check that the Bregman iteration converges in this case as well. Even though it is not obvious a priori how to choose $\lambda_{1}$ and $\lambda_{2}$ in order to have fast convergence, this choice only has to be done once and a potential user does not have to worry about them.  Empirically we have found that $\lambda_{1}$ and $\lambda_{2}$ have to be a few orders of magnitude larger than $\alpha$ and $\beta$ respectively. We have found that a good empirical rule is to choose $\lambda_{1}=10\alpha$ and $\lambda_{2}=10\beta$ or even $\lambda_{1}=100\alpha$ and $\lambda_{2}=100\beta$. In \cite{Boyd} there is an interesting discussion about that matter. See also \cite{mineipol}, where it is shown with numerical experiments for the case of inpainting, how this choice of $\lambda$'s
 results in different speed of convergence and different behaviour of the intermediate iterates.
 
\section{Applications in Denoising}\label{denoising}

In this section we discuss the application of the TV-TV$^2$ approach \eqref{functionaldiscrete} to image denoising, where the operator $T$  equals the identity.
We have performed experiments to images that have been corrupted with Gaussian noise, thus the $L^{2}$ norm  in the fidelity term is the most suitable. We compare our method with infimal convolution \cite{ChambolleLions} solved also with a  split Bregman scheme and the total generalised variation \cite{TGV} solved  with the primal-dual method of Chambolle and Pock \cite{chambolle2011first} as it is described in \cite{tgvcolour}.
We present examples of \eqref{functionaldiscrete} for $\alpha=0$, $\beta\ne 0$ and $\alpha\ne 0$, $\beta=0$. Note that for $\beta=0$, our model corresponds to the classical Rudin-Osher-Fatemi (ROF) denoising model, while for $\alpha=0$ it corresponds to the pure TV$^{2}$ restoration \cite{Piffet}.  Our basic synthetic test image is shown in Figure \ref{mytestimage}.
\begin{figure}[ht]
\begin{center}
\includegraphics[height=5cm]{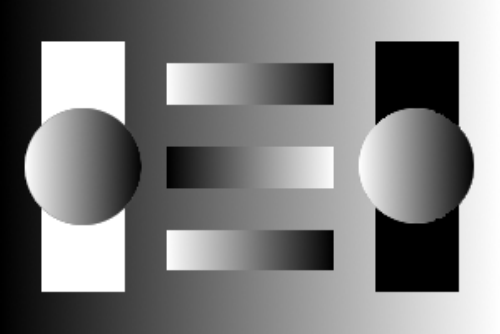}
\caption{Main test image. Resolution: 200$\times$ 300 pixels} 
\label{mytestimage}
\end{center}
\end{figure}

Our main assessment for the quality of the reconstruction is the \emph{structural similarity index} SSIM \cite{wang2009mean,wang2004image}. The reason for that choice is that in contrast to traditional quality measures like the peak signal-to-noise ratio PSNR, the SSIM index also assesses the conservation of the structural information of the reconstructed image. Note that a perfect reconstruction has SSIM value equal to 1. A justification for the choice of SSIM as a good fidelity measure instead of the traditional PSNR can be seen in Figure \ref{just}. The second and the third image are denoising results with the first order method ($\beta=0$,  Gaussian noise, Variance = 0.5). The middle picture is the one with the highest SSIM value (0.6595) while the SSIM value of the right picture is significantly lower (0.4955). This assessment  comes into an agreement with the human point of view since, even though this is subjective, one would consider the middle picture as a better reconstruction.  On the other hand the middle picture has slightly smaller PSNR value (14.02) than the right one (14.63), which was the highest PSNR value. Similar results are obtained for $\beta\ne 0$.
\begin{figure*}[ht]
\begin{center}
\includegraphics[scale=0.63]{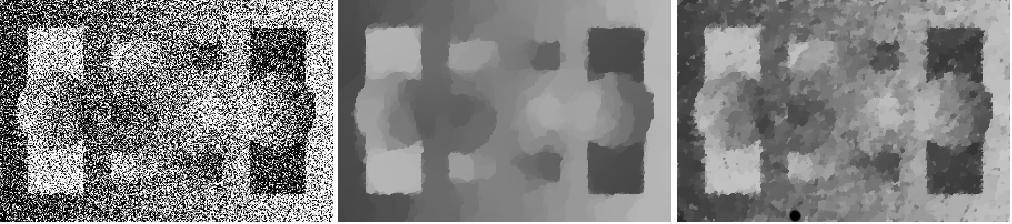}
\end{center}
\caption{Justification for the usage of SSIM index. The initial image, seen on Figure \ref{mytestimage}, is contaminated with Gaussian noise of variance 0.5 (left). We provide the  best SSIM valued (middle) (SSIM=0.6595, PSNR=14.02) and the  best PSNR valued (right) (SSIM=0.4955, PSNR=14.63) reconstructions among reconstructed images with the first order method ($\beta=0$).  The better SSIM assessment of the first image agrees more with the human perception}
\label{just}
\end{figure*}

As a stopping criterium for our algorithm we use a predefined  number of iterations. In most examples this number is 300.  We observe that after 80-100 iterations the relative residual  of the iterates is of the order of $10^{-3}$ or lower (see also Table \ref{times}) and hence no noticeable change in the iterates is observed after that.

In the following we shall examine whether the introduction of the higher-order term ($\beta\ne 0$) in the denoising procedure, produces results of higher quality.

\begin{figure*}[ht]
\begin{center}
\subfigure[Clean image, SSIM=1]{
\includegraphics[scale=0.63]{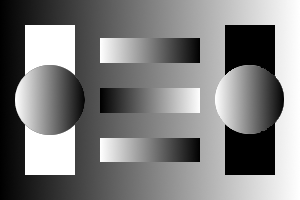}
}
\subfigure[Noisy image, Gaussian noise, variance=0.005, SSIM=0.3261]{
\includegraphics[scale=0.63]{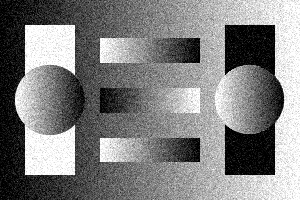}
}
\subfigure[TV denoising, $\alpha$$=$$0.12$, SSIM=0.8979]{
\includegraphics[scale=0.63]{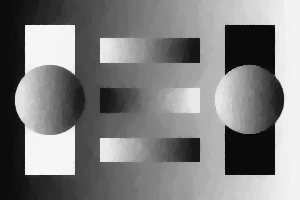}
}
\subfigure[TGV denoising, SSIM=0.9249]{
\includegraphics[scale=0.63]{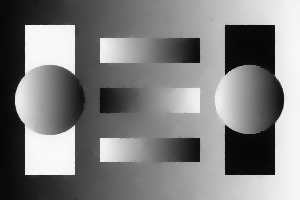}
}
\subfigure[Inf-convolution  denoising, SSIM=\newline0.9053]{
\includegraphics[scale=0.63]{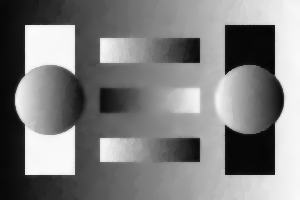}
}
\subfigure[TV-TV$^2$ denoising, $\alpha$$=$$0.06$, $\beta$$=$$0.03$, SSIM=0.9081]{
\includegraphics[scale=0.63]{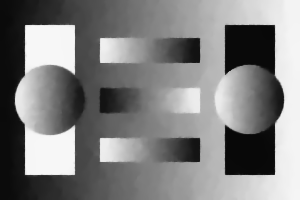}
}
\subfigure[TV$^2$ denoising,  $\beta$$=$$0.07$, SSIM$=$ \newline$0.8988$]{
\includegraphics[scale=0.63]{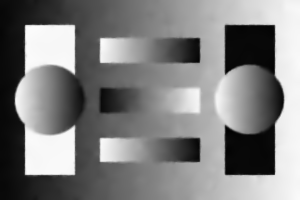}
}
\subfigure[TV-TV$^2$ denoising, $\alpha$$=$$0.06$, $\beta$$=$$0.06$, SSIM=0.8989]{
\includegraphics[scale=0.63]{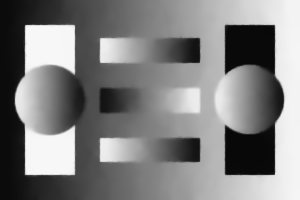}
}
\subfigure[TV-TV$^2$ denoising, $\alpha$$=$$0.06$, $\beta$$=$$0.06$, SSIM=0.9463, Post-processing: GIMP sharpening \& contrast]{
\includegraphics[scale=0.63]{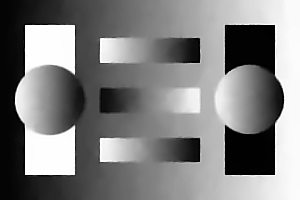}
}
\caption{Denoising of a synthetic image that has been corrupted with Gaussian noise of variance 0.005. We chose $\lambda_{1}=\lambda_{2}=1$ for these implementations }
\label{denoisingimages}
\end{center}
\end{figure*}

\begin{figure*}[ht]
\begin{center}
\subfigure[Clean image, SSIM=1]{
\includegraphics[scale=0.3]{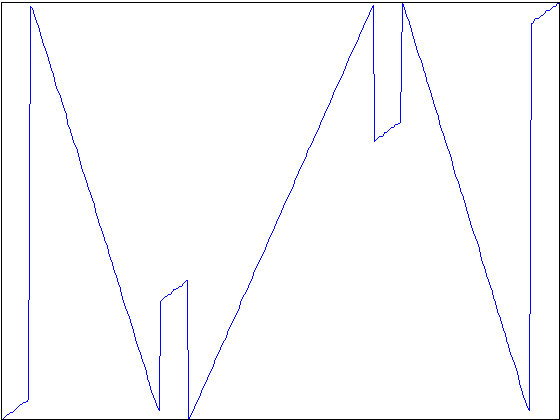}
}
\subfigure[Noisy image, Gaussian noise, variance=0.005, SSIM=0.3261]{
\includegraphics[scale=0.3]{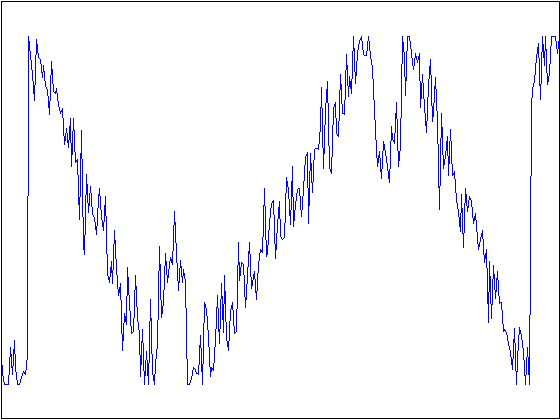}
}
\subfigure[TV denoising, $\alpha$$=$$0.12$,  SSIM=\newline0.8979]{
\includegraphics[scale=0.3]{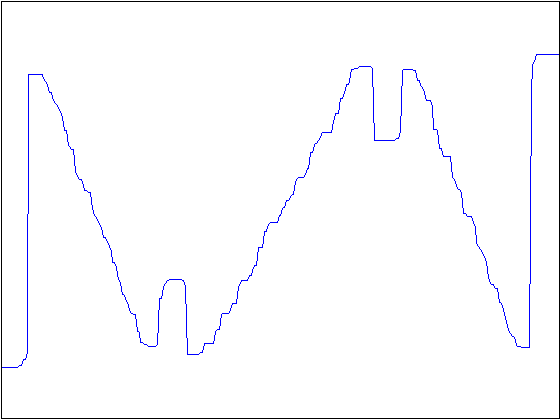}
}
\subfigure[TGV denoising, SSIM=0.9249]{
\includegraphics[scale=0.3]{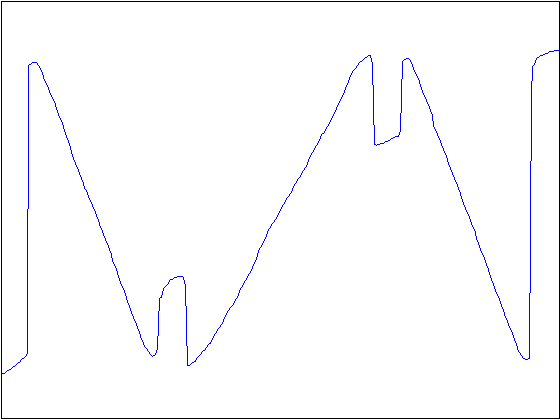}
}
\subfigure[Inf-convolution  denoising,\newline SSIM=0.9053]{
\includegraphics[scale=0.3]{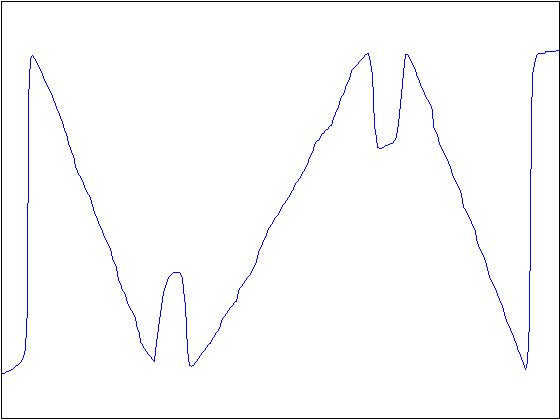}
}
\subfigure[TV-TV$^2$ denoising, $\alpha$$=$$0.06$, \newline$\beta$$=$$0.03$, SSIM=0.9081]{
\includegraphics[scale=0.3]{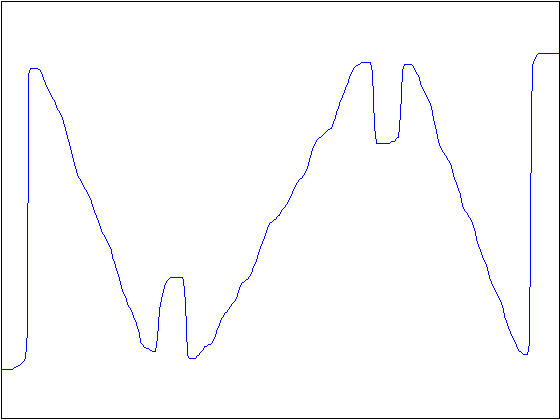}
}
\subfigure[TV$^2$ denoising,  $\beta$$=$$0.07$, SSIM$=$ \newline$0.8988$]{
\includegraphics[scale=0.3]{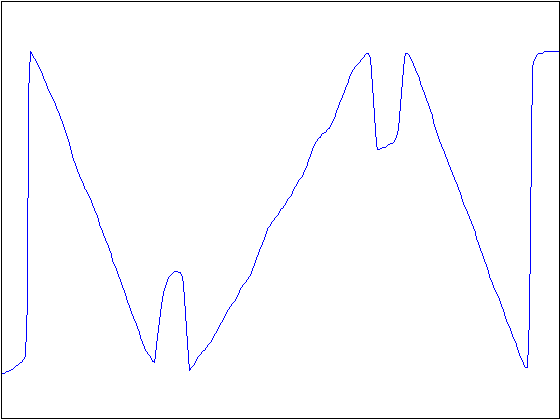}
}
\subfigure[TV-TV$^2$ denoising, $\alpha$$=$$0.06$,\newline $\beta$$=$$0.06$, SSIM=0.8989]{
\includegraphics[scale=0.3]{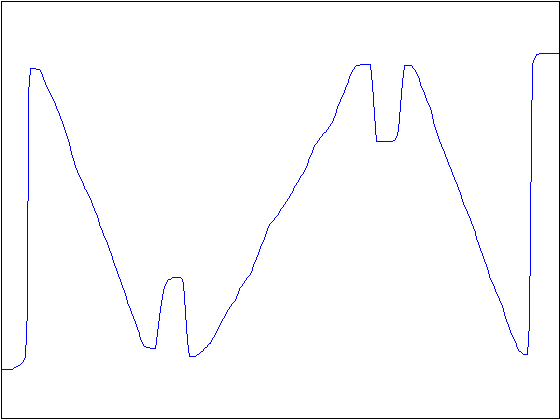}
}
\subfigure[TV-TV$^2$ denoising, $\alpha$$=$$0.06$, $\beta$$=$$0.06$, SSIM=0.9463, \newline Post-processing: GIMP sharpening \& contrast]{
\includegraphics[scale=0.3]{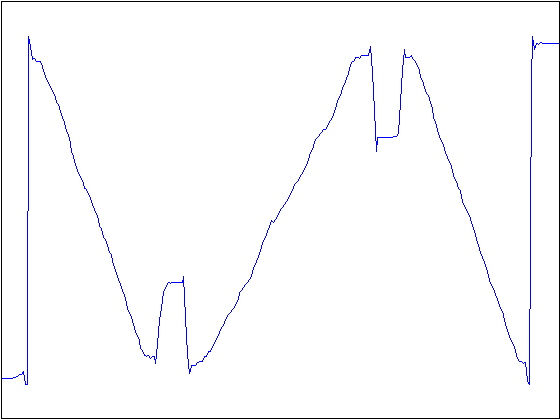}
}
\caption{Corresponding middle row slices of images in Figure \ref{denoisingimages}}
\label{denoisingslices}
\end{center}
\end{figure*}

\begin{figure*}[ht]
\begin{center}
\includegraphics[scale=0.6 ]{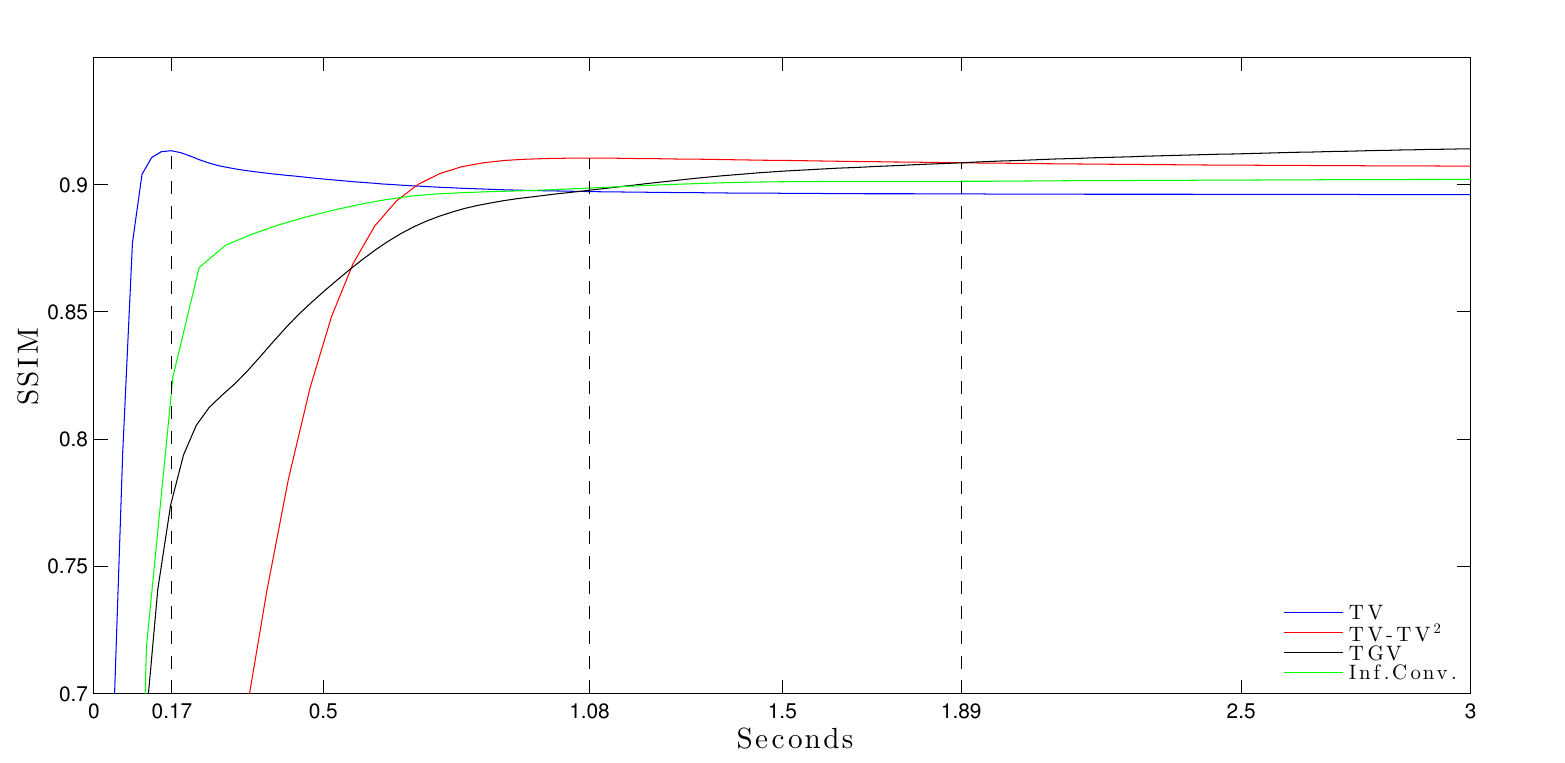}
\caption{Evolution of the SSIM index with absolute CPU time for the examples of Figure \ref{denoisingimages}. For TV denoising the SSIM value peaks after 0.17 seconds (0.9130) and the after it drops sharply when the staircasing appears, see corresponding comments on \cite{goldstein2009split}. For TV-TV$^{2}$ the peak appears after 1.08 seconds (0.9103) and remains essentially constant. The TGV iteration starts to outperform the methods after 1.89 seconds. This shows the potential of split Bregman to produce visually satisfactory results before convergence has occurred, in contrast with the primal-dual method}
\label{ssimevolution}
\end{center}
\end{figure*}

\begin{figure*}[ht]
\begin{center}
\includegraphics[scale=0.45]{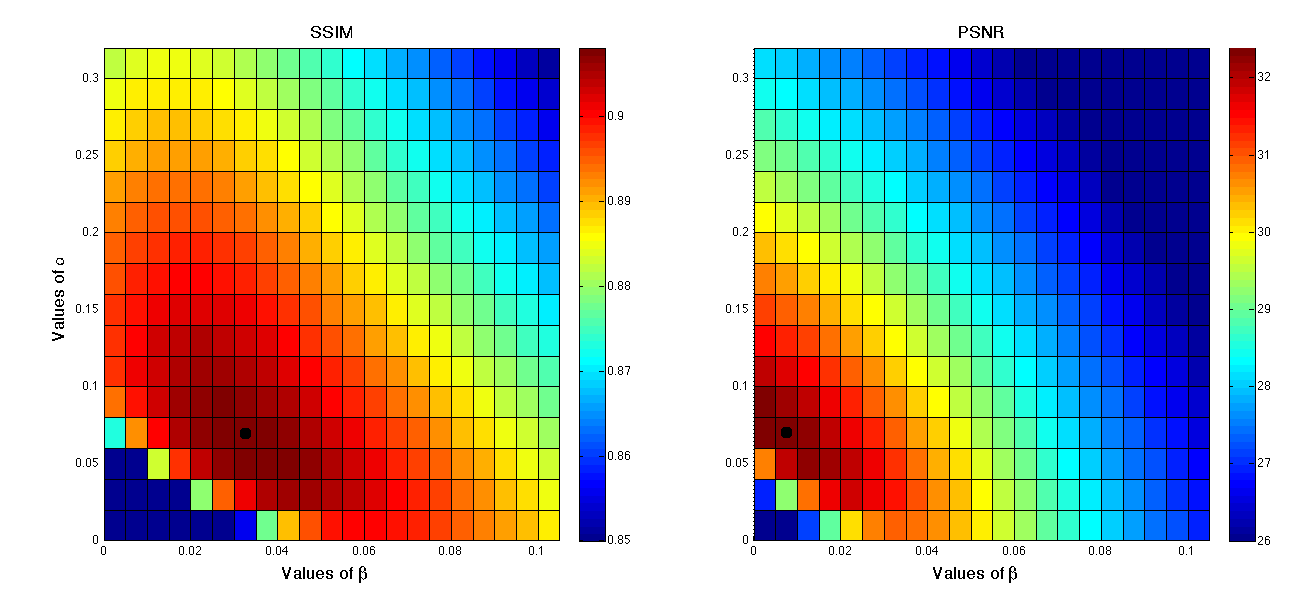}
\end{center}
\caption{Plot of the SSIM and PSNR values of the restored image as functions of $\alpha$ and $\beta$. For display convenience all the values under 0.85 (SSIM) and 26 (PSNR) were coloured with dark blue. The dotted cells corresponds to the highest SSIM (0.9081) and PSNR (32.39) value that were achieved for $\alpha=0.06$, $\beta=0.03$ and $\alpha=0.06$, $\beta=0.005$ respectively. Note that the first column in both plots corresponds to TV denoising, ($\beta=0$). The original image was corrupted with Gaussian noise of variance $0.005$}
\label{hroma}
\end{figure*}

\begin{figure*}[ht]
\begin{center}
\includegraphics[scale=0.5]{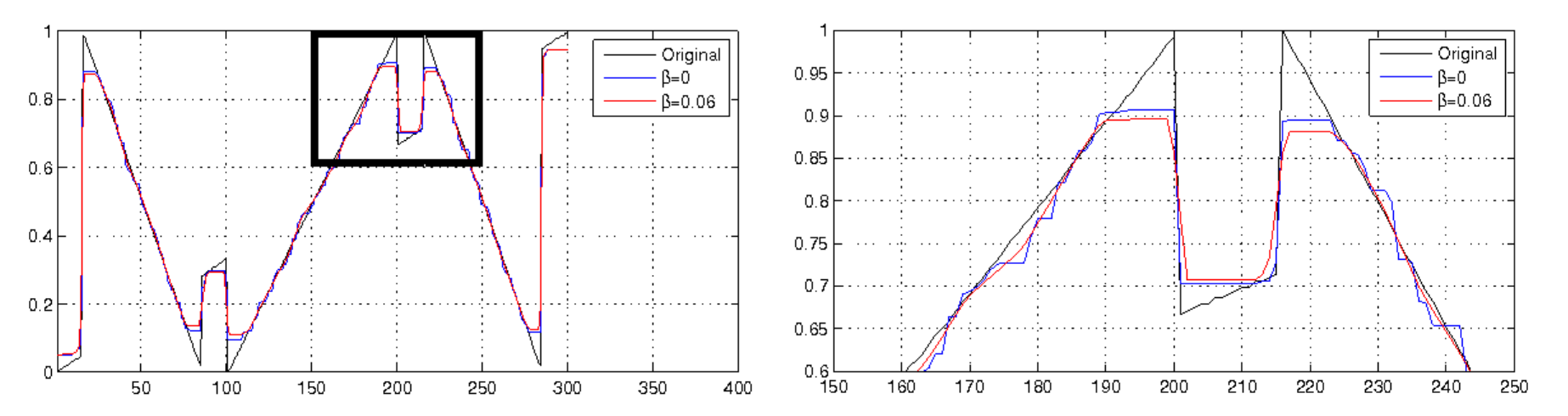}
\end{center}
\caption{Left: Middle row slices of reconstructed images with $\alpha=0.12$, $\beta=0$ (blue colour) and $\alpha=0.12$. $\beta=0.06$ (red colour). Slices of the original image are plotted with black colour. Right: Detail of the first plot. Even though the higher-order method eliminates the staircasing effect, it also results to further slight loss of contrast}
\label{contrast}
\end{figure*}
 The noise has been produced with
MATLAB's built in function \emph{imnoise}. 

\begin{figure*}[ht]
\begin{center}
\subfigure[Clean image, SSIM=1]{
\includegraphics[scale=0.45]{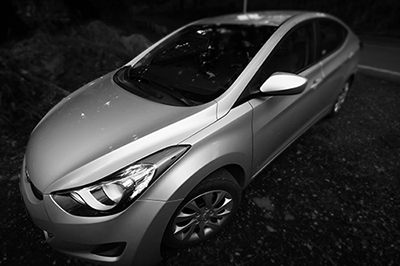}
}
\subfigure[Noisy image, Gaussian noise, variance=0.005, SSIM=0.4436]{
\includegraphics[scale=0.45]{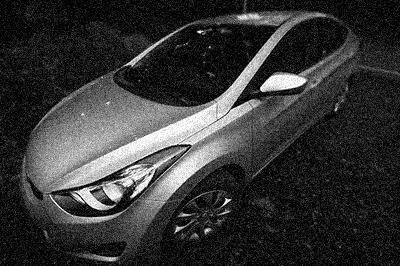}
}
\subfigure[TV denoising, $\alpha=0.05$, \newline SSIM=0.8168]{
\includegraphics[scale=0.45]{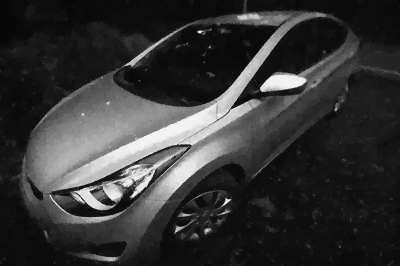}
}

\subfigure[TV-TV$^{2}$ denoising, $\alpha=0.017$, \newline$\beta=0.017$, SSIM=0.8319]{
\includegraphics[scale=0.45]{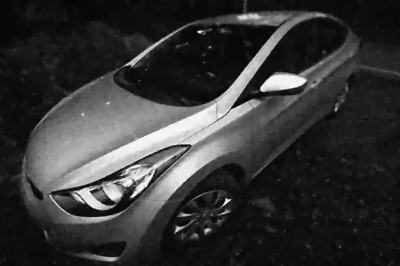}
}
\subfigure[TV-TV$^{2}$ denoising, $\alpha=0.023$,\newline $\beta=0.023$, SSIM=0.8185]{
\includegraphics[scale=0.45]{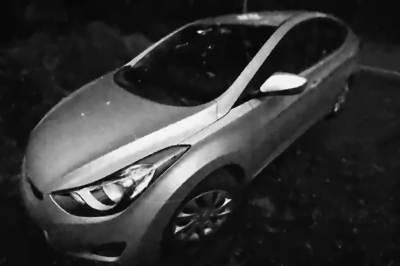}
}
\subfigure[TV$^{2}$ denoising, $\beta=0.05$, \newline SSIM=0.8171]{
\includegraphics[scale=0.45]{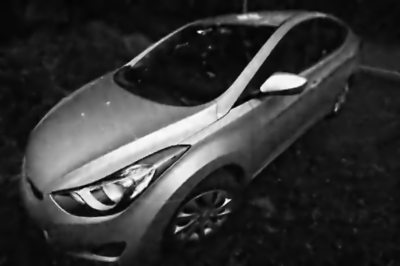}
}

\subfigure[Clean image, SSIM=1]{
\includegraphics[scale=1.2]{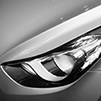}
}
\subfigure[Noisy image, Gaussian noise, variance=0.005,\newline SSIM=0.4436]{
\includegraphics[scale=1.2]{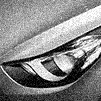}
}
\subfigure[TV denoising,\newline $\alpha=0.05$,  SSIM=0.8168]{
\includegraphics[scale=1.2]{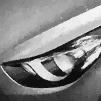}
}

\subfigure[TV-TV$^{2}$ denoising,\newline $\alpha=0.017$, $\beta=0.017$,\newline SSIM=0.8319]{
\includegraphics[scale=1.2]{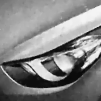}
}
\subfigure[TV-TV$^{2}$ denoising,\newline $\alpha=0.023$, $\beta=0.023$,\newline SSIM=0.8185]{
\includegraphics[scale=1.2]{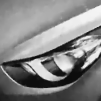}
}
\subfigure[TV$^{2}$ denoising, \newline $\beta=0.05$, SSIM=0.8171]{
\includegraphics[scale=1.2]{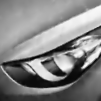}
}
\caption{Denoising of a natural image that has been corrupted with Gaussian noise of variance 0.005. We chose $\lambda_{1}=\lambda_{2}=1$ for these implementations}
\label{denoisingnatural}
\end{center}
\end{figure*}

Figure \ref{denoisingimages} depicts one denoising example, where the original image is corrupted with Gaussian noise of variance 0.005. For better visualisation, we  include the middle row slices of all the reconstructions in Figure \ref{denoisingslices}. The highest SSIM value for TV denoising is achieved for $\alpha=0.12$ (SSIM=0.8979) while the highest one for TV-TV$^{2}$  is achieved for $\alpha=0.06$, $\beta=0.03$ (SSIM=0.9081). This is slightly better than infimal convolution (SSIM=0.9053).  Note, however, that this optimal combination of $\alpha$ and $\beta$  in terms of SSIM   does not always correspond to best visual result. In general, the latter corresponds  to a slightly bigger $\beta$ than the one chosen by SSIM, see Figure \ref{denoisingimages}(h). Still, for proof of concept, we prefer to stick with an objective quality measure and SSIM, in our opinion, is the most reliable choice for that matter. In the image of Figure \ref{denoisingimages}(h) the staircasing effect has almost disappeared and the image is still \emph{pleasant} to the human eye despite being slightly more blurry. This slight blur, which is the price that the method pays for the removal of the staircasing effect, can be easily and efficiently removed in post-processing using simple sharpening filters, e.g. in GIMP. We did that in Figure \ref{denoisingimages}(i), also adjusting the contrast, achieving a very good result both visually and SSIM-wise (0.9463).
 
The highest SSIM value is achieved by TGV (0.9249), delivering a reconstruction of very good quality. However, TGV converges slowly to the true solution. In order to check that, we compute the ground truth solution (denoted by $GT$) for the parameters of the TGV problem that correspond to Figure \ref{denoisingimages}(d), by taking a large amount of iterations (2000). We check the GPU time that is needed for the iterates to have a relative residual 
\[\frac{\|u^{k}-GT\|_{2}}{\|GT\|_{2}}\le  10^{-3}\]
and we do the same for the TV-TV$^{2}$  example of Figure \ref{denoisingimages}(f). For TGV, it takes 1297 iterations (primal-dual method \cite{chambolle2011first}) and 36.25 seconds while for TV-TV$^{2}$ it takes 86  split Bregman iterations and 4.05 seconds, see Table \ref{times}.  That makes our method more suitable for cases where fast but not necessarily optimal results are needed, e.g. video processing.

In order to examine the quality of the reconstructions that are produced from each method as the number of iteration increases we have plotted in Figure \ref{ssimevolution} the evolution of SSIM values. In the horizontal axis, instead of the number of iterations we put the absolute CPU time calculated by the product of the number of iterations times the CPU time per iteration as it is seen in Table \ref{times}. We observe that, for TGV, the SSIM value increases gradually with time, while for the methods solved with split Bregman the image quality peaks very quickly and then remains almost constant except for TV where the staircasing appears in the later iterations.

Next we check how the SSIM and PSNR values of the restored images behave as a function of the weighting parameters $\alpha$ and $\beta$. In Figure \ref{hroma} we plot the results for $\alpha=0,\,0.02,\, 0.04,\ldots,\, 0.3$ and $\beta=0,\,0.005,\,0.01,\ldots,\, 0.1$. The plots suggest that both quality measures behave in a continuous way and that they have a global maximum. However, PSNR tends to rate higher those images that have been processed with a small value of $\beta$ or even with $\beta=0$ which is not the case for SSIM. An explanation for that is that higher values of $\beta$ result to a further loss of contrast, see Figure \ref{contrast}, something that is penalised by the PSNR. The SSIM index penalises the loss of contrast as well but it also penalises the creation of the staircasing effect. This is another indication for the suitability of SSIM over PNSR. Note also, that the contrast can be recovered easily in a post-processing stage while it is not an easy task to reduce the staircasing effect using conventional processing tools.

Finally, in Figure \ref{denoisingnatural} we perform denoising in an natural image which has been corrupted with Gaussian noise also of variance $0.005$. The staircasing of TV denoising (SSIM=0.8168) is obvious in Figures \ref{denoisingnatural}(c) and (i). The overall best performance of the TV-TV$^{2}$ method (SSIM=0.8319) is achieved by choosing $\alpha=\beta=0.017$, Figure \ref{denoisingnatural}(d). However, one can get satisfactory results by choosing $\alpha=\beta=0.023$ (SSIM=0.8185), eliminating further the staircasing without blurring the image too much, compare for example the details in the Figures \ref{denoisingnatural}(j) and (k).

\section{Applications in deblurring}\label{deblurring}
\begin{figure*}[ht]
\subfigure[Clean image , SSIM=1]{
\includegraphics[scale=0.63]{fig8.png}
}
\subfigure[Blurred and noisy image, SSIM=\newline0.8003]{
\includegraphics[scale=0.63]{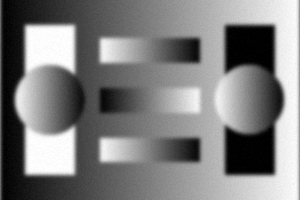}
}
\subfigure[TV deblurring, $\alpha$$=$$0.006$, SSIM=\newline0.9680]{
\includegraphics[scale=0.63]{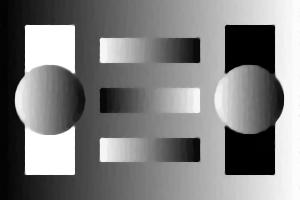}
}
\subfigure[TGV deblurring, SSIM=0.9806]{
\includegraphics[scale=0.5035]{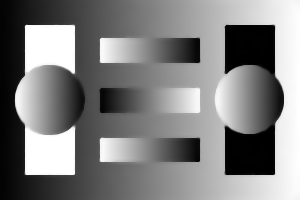}
}
\subfigure[Inf-convolution deblurring, SSIM=\newline0.9466]{
\includegraphics[scale=0.5035]{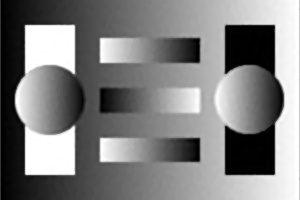}
}
\subfigure[TV-TV$^{2}$ deblurring, $\alpha=0.004$, $\beta=0.0001$, SSIM=0.9739]{
\includegraphics[scale=0.63]{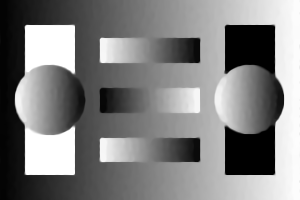}
}
\subfigure[TV-TV$^{2}$ deblurring, $\alpha=0.004$, $\beta=0.0002$, SSIM=0.9710]{
\includegraphics[scale=0.63]{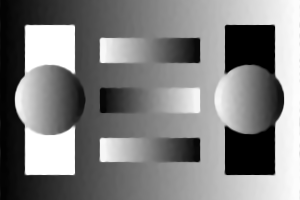}
}
\subfigure[TV$^{2}$ deblurring,  $\beta=0.0012$, SSIM=\newline0.9199]{
\includegraphics[scale=0.63]{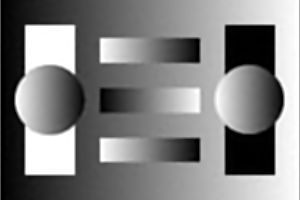}
}
\caption{Deblurring of a blurred (Gaussian kernel of variance $\sigma=2$) and noisy (additive Gaussian noise, variance $10^{-4}$) synthetic image. We chose $\lambda_{1}=100\alpha$, $\lambda_{2}=100\beta$ for these implementations.}
\label{deblurringimages}
\end{figure*}

\begin{figure*}[ht]
\begin{center}
\subfigure[Clean, image, SSIM=1]{
\includegraphics[scale=0.3]{fig17.png}
}
\subfigure[Blurred and noisy image, \newline SSIM=0.8003]{
\includegraphics[scale=0.3]{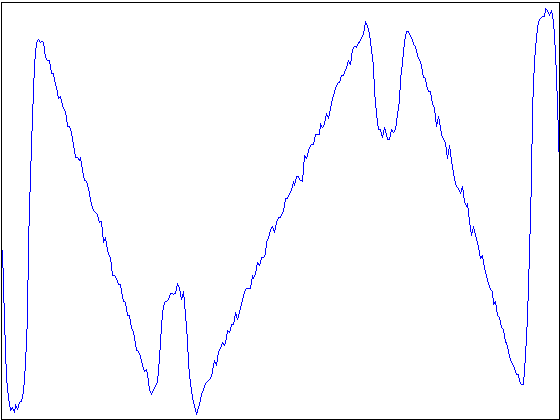}
}
\subfigure[TV deblurring, $\alpha$$=$$0.006$, SSIM=\newline0.9680]{
\includegraphics[scale=0.3]{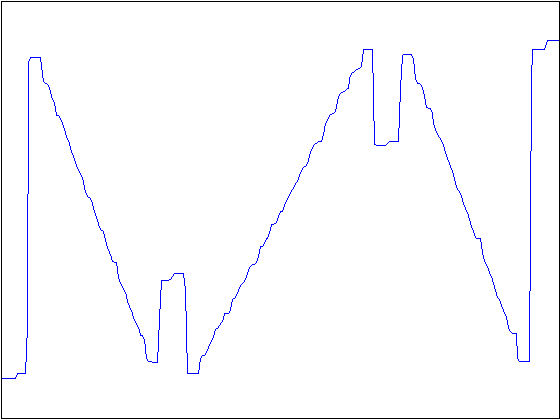}
}
\subfigure[TGV deblurring, SSIM=0.9806 ]{
\includegraphics[scale=0.24]{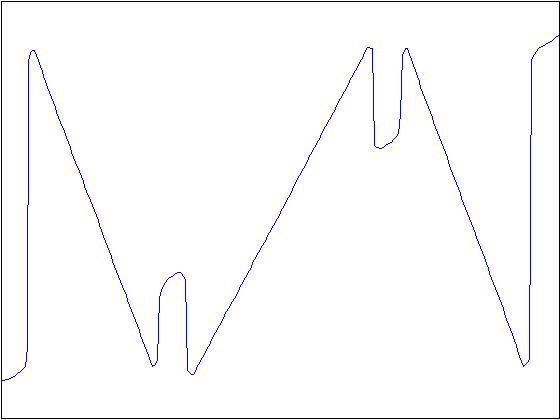}
}
\subfigure[Inf-convolution deblurring, \newline SSIM=0.9466]{
\includegraphics[scale=0.24]{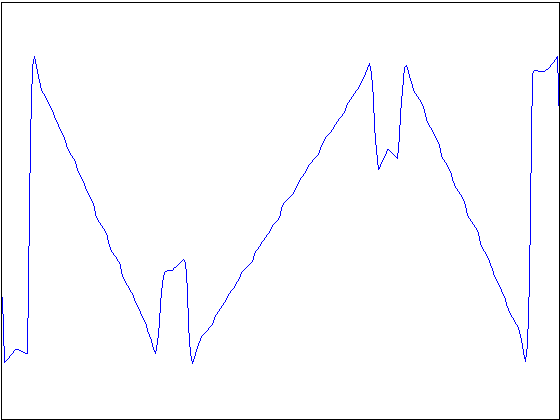}
}
\subfigure[TV-TV$^{2}$ deblurring, $\alpha=0.004$, $\beta=0.0001$, SSIM=0.9739]{
\includegraphics[scale=0.3]{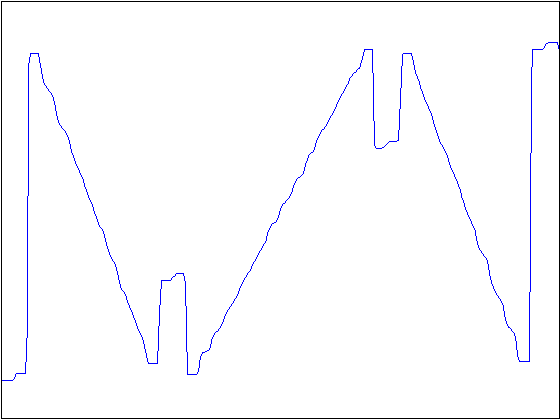}
}
\subfigure[TV-TV$^{2}$ deblurring, $\alpha=0.004$, $\beta=0.0002$, SSIM=0.9710]{
\includegraphics[scale=0.3]{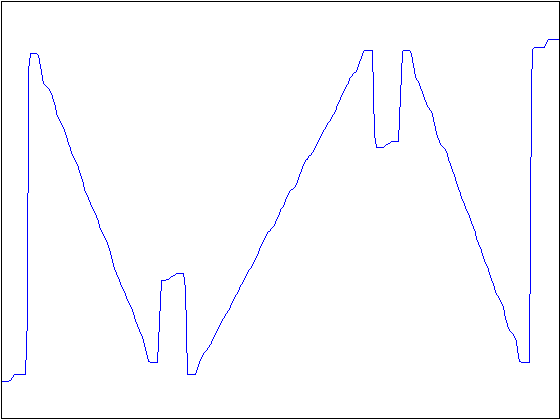}
}
\subfigure[TV$^{2}$ deblurring,  $\beta$$=$$0.0012$, \newline SSIM=0.9199]{
\includegraphics[scale=0.3]{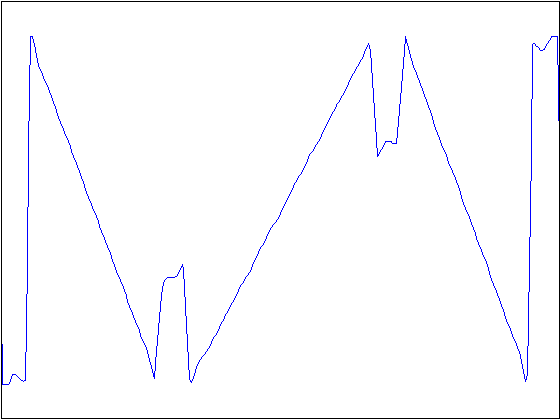}
}
\caption{Corresponding middle row slices of images in Figure \ref{deblurringimages}}
\label{deblurringslices}
\end{center}
\end{figure*}

\begin{figure*}[ht]
\begin{center}
\subfigure[Clean image, SSIM=1]{
\includegraphics[scale=0.45]{fig68.png}
}
\subfigure[Blurred and noisy image,\newline  SSIM=0.7149]{
\includegraphics[scale=0.45]{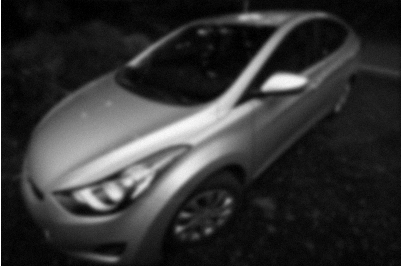}
}
\subfigure[TV deblurring, $\alpha=0.0007$,\newline  SSIM=0.8293]{
\includegraphics[scale=0.45]{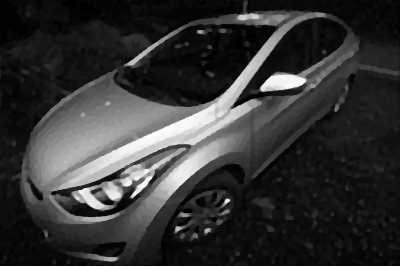}
}

\subfigure[TV-TV$^{2}$ deblurring, $\alpha=0.0005$,\newline $\beta=0.0001$, SSIM=0.8361]{
\includegraphics[scale=0.45]{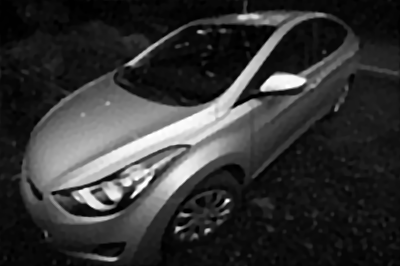}
}
\subfigure[TV-TV$^{2}$ deblurring, $\alpha=0.0005$,\newline $\beta=0.0003$, SSIM=0.8307]{
\includegraphics[scale=0.45]{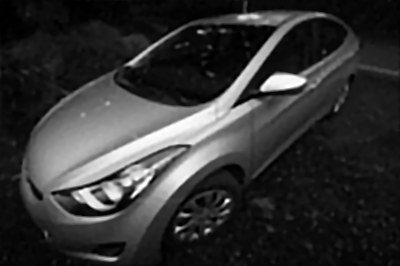}
}
\subfigure[TV-TV$^{2}$ deblurring, $\alpha=0.0005$,\newline $\beta=0.0003$, SSIM=0.8330,\newline Post-processing: GIMP sharpening]{
\includegraphics[scale=0.45]{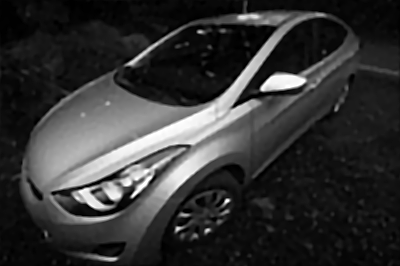}
}

\subfigure[Clean image, SSIM=1]{
\includegraphics[scale=1.2]{fig74.png}
}
\subfigure[Blurred and noisy image, SSIM=0.7149]{
\includegraphics[scale=1.2]{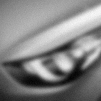}
}
\subfigure[TV deblurring, \newline $\alpha=0.0007$,SSIM=0.8293]{
\includegraphics[scale=1.2]{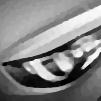}
}

\subfigure[TV-TV$^{2}$ deblurring,\newline $\alpha=0.0005$, $\beta=0.0001$,\newline SSIM=0.8361]{
\includegraphics[scale=1.2]{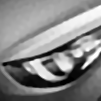}
}
\subfigure[TV-TV$^{2}$ deblurring,\newline $\alpha=0.0005$, $\beta=0.0003$,\newline SSIM=0.8307]{
\includegraphics[scale=1.2]{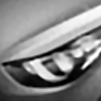}
}
\subfigure[TV-TV$^{2}$ deblurring, $\alpha=0.0005$, $\beta=0.0003$, SSIM=0.8330,\newline Post-processing: GIMP \newline sharpening]{
\includegraphics[scale=1.2]{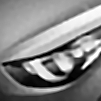}
}
\caption{Deblurring of a blurred (Gaussian kernel of variance $\sigma=2$) and noisy (additive Gaussian noise, variance $10^{-4}$) natural image. We chose $\lambda_{1}=100\alpha$, $\lambda_{2}=100\beta$ for these implementations}
\label{deblurringnatural}
\end{center}
\end{figure*}

In our deblurring implementation $T$ denotes a circular convolution with a discrete approximation of a Gaussian kernel ($\sigma=2$,  size: $11\times 11$ pixels). The blurred image is also  corrupted by additive Gaussian noise of variance $10^{-4}$. Let us note here that  the optimality condition \eqref{optcont} can be solved very fast using fast Fourier transforms. Deblurring results are shown in Figure \ref{deblurringimages} and the corresponding middle row slices in Figure \ref{deblurringslices}. As in the denoising case the introduction of the second order term with a small weight $\beta$ decreases noticeably the staircasing effect, compare Figures \eqref{deblurringimages}(c) and (f). Moreover, we can achieve better visual results if we increase further the value of  $\beta$ without blurring the image significantly, Figure \eqref{deblurringimages}(g). Infimal convolution does not give a satisfactory result here, Figure \ref{deblurringimages}(e). TGV gives again the best qualitative result, Figure \eqref{deblurringimages}(d), but the computation takes about 10 minutes. Even though the time comparison is not completely fair here (the implementation described in \cite{tgvcolour} does not use FFT) it takes a few thousands iterations for TGV to deblur the image satisfactorily, in comparison with a few hundreds for our TV-TV$^{2}$ method.

In Figure \ref{deblurringnatural} we discuss the performance of the TV-TV$^2$ method for deblurring a natural image. The best result for  the TV-TV$^{2}$ method  (SSIM=0.8361) is achieved with $\alpha=0.0005$ and  $\beta=0.0001$, Figure \ref{deblurringnatural}(d). As in the case of denoising, one can increase the value of $\beta$ slightly, eliminating further the staircasing effect, Figures \ref{deblurringnatural}(e) and (k). The additional blur which is a result of the larger $\beta$ can be controlled using a sharpening filter, Figures \ref{deblurringnatural}(f) and (l).

\section{Applications in inpainting}\label{inpainting}
\begin{figure*}[ht]
\begin{center}
\subfigure[Initial image, the gray area denotes the inpainting domain $D$]{
\includegraphics[scale=0.85]{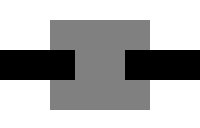}
}
\subfigure[Harmonic inpainting, $\alpha=0.01$]{
\includegraphics[scale=0.85]{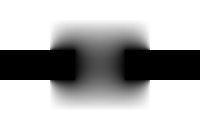}
}
\subfigure[TV inpainting, $\alpha=0.01$]{
\includegraphics[scale=0.85]{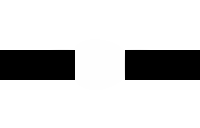}
}
\subfigure[Euler's elastica inpainting]{
\includegraphics[scale=0.68]{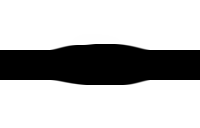}
}
\subfigure[TV-TV$^{2}$ inpainting, $\alpha=0.005$, $\beta=0.005$]{
\includegraphics[scale=0.85]{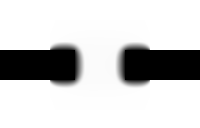}
}
\subfigure[TV$^{2}$ inpainting, $\beta=0.01$]{
\includegraphics[scale=0.85]{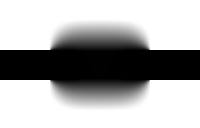}
}
\subfigure[TV$^{2}$ inpainting -- post-processing with shock filter \cite{alvarez1994signal}]{
\includegraphics[scale=0.7]{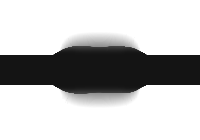}
}

\caption{Comparison of different inpainting methods regarding connectivity across large gaps}
\label{inpainting1}
\end{center}
\end{figure*}

\begin{figure*}[ht]
\begin{center}
\subfigure[Domain 1]{
\includegraphics[scale=0.5]{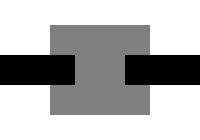}
}
\subfigure[Domain 6]{
\includegraphics[scale=0.5]{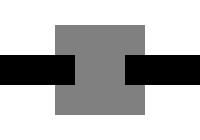}
}
\subfigure[Domain 7]{
\includegraphics[scale=0.5]{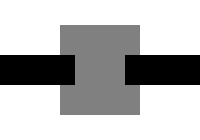}
}
\subfigure[Domain 8]{
\includegraphics[scale=0.5]{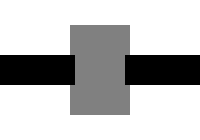}
}
\subfigure[Domain 9]{
\includegraphics[scale=0.5]{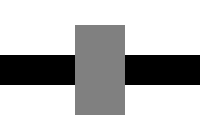}
}
\subfigure[Domain 1,$\,$TV$^{2}$]{
\includegraphics[scale=0.5]{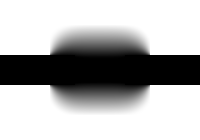}
}
\subfigure[Domain 6,$\,$TV$^{2}$]{
\includegraphics[scale=0.5]{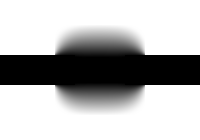}
}
\subfigure[Domain 7,$\,$TV$^{2}$]{
\includegraphics[scale=0.5]{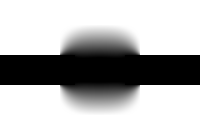}
}
\subfigure[Domain 8,$\,$TV$^{2}$]{
\includegraphics[scale=0.5]{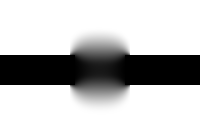}
}
\subfigure[Domain 9,$\,$TV$^{2}$]{
\includegraphics[scale=0.5]{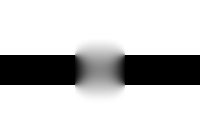}
}
\caption{Different pure TV$^{2}$ inpainting results for different inpainting domains of decreasing width. In all computations we set $\beta=0.001$. }
\label{inpainting2}
\end{center}
\end{figure*}

\begin{figure*}[ht]
\begin{center}
\subfigure[Initial image]{
\includegraphics[scale=0.55]{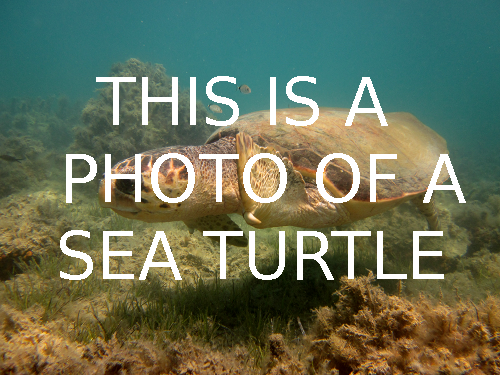}
}
\subfigure[TV inpainting, $\alpha=0.001$]{
\includegraphics[scale=0.55]{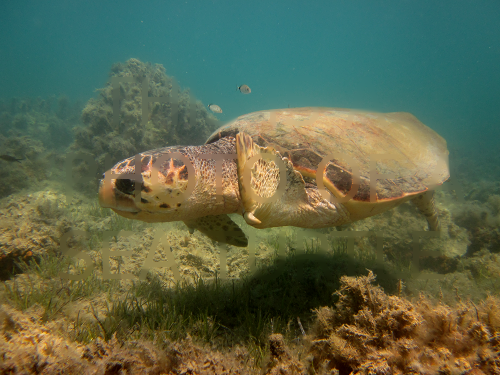}
}
\subfigure[Euler's elastica inpainting]{
\includegraphics[scale=0.44]{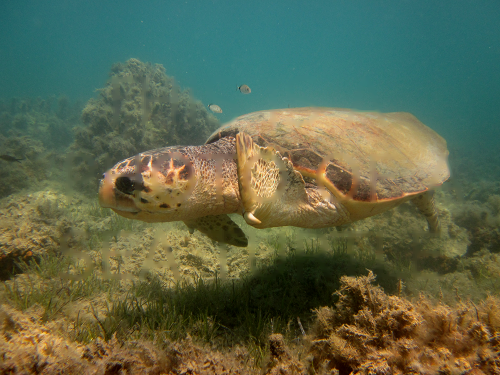}
}
\subfigure[TV$^2$ inpainting, $\beta=0.001$]{
\includegraphics[scale=0.55]{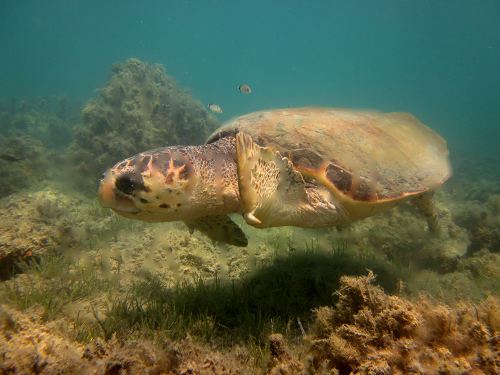}
}
\subfigure[Initial image: detail]{
\includegraphics[scale=1.8]{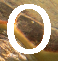}
}
\subfigure[TV inpainting: detail]{
\includegraphics[scale=1.8]{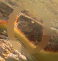}
}
\subfigure[Euler's elastica inpainting: detail]{
\includegraphics[scale=1.8]{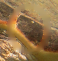}
}
\subfigure[TV$^2$ inpainting: detail]{
\includegraphics[scale=1.8]{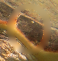}
}
\caption{Removing large font text from a natural image. The TV$^{2}$ result is comparable with the Euler's elastica one.}
\label{inpainting3}
\end{center}
\end{figure*}

Finally, we present examples for the application of our TV-TV$^{2}$ approach to image inpainting. There, the goal is to reconstruct an image inside a missing part using information from the intact part. The missing part is a domain $D\subseteq \Omega$, known as the \emph{inpainting domain}. In this case the operator $T$ applied to an image $u$ gives
\[Tu=\mathcal{X}_{\Omega\setminus D}u,\]
where as before $\mathcal{X}_{\Omega\setminus D}$ is the characteristic function of $\Omega\setminus D$, the intact part of the image domain.
Let us note here that in cases of a small number of inpainting domains and a completely noise free and trustable image outside of the missing part, it is preferable to solve the inpainting problem only inside the holes, see for example \cite{cai2008framelet}. However, here we would like to keep the method flexible, such that it includes the case where there is noise in the known regions as well.

In order to take advantage of the FFT for the optimality condition \eqref{optcont} a different splitting technique to \eqref{minimisationstep2} is required: 
\begin{equation}\label{split2}
\min_{\substack{u\in \mathbb{R}^{n\times m}\\ \tilde{u}\in\mathbb{R}^{n\times m} \\ v \in \left (\mathbb{R}^{n\times m}\right)^{2}  \\ w \in \left (\mathbb{R}^{n\times m}\right)^{3}}  }  \|\mathcal{X}_{\Omega\setminus D}(u-u_{0})\|_{2}^{2}+\alpha \|v\|_{1}+\beta \|w\|_{1}, 
\end{equation}
such that  $u=\tilde{u}$, $v=\nabla \tilde{u}$, $w= \nabla^{2}\tilde{u}$. We refer the reader to \cite{mineipol} for the details. 

In Figure \ref{inpainting1} we compare our method with harmonic and  Euler's elastica inpainting, see \cite{chan2002euler,tai2010fast}. In the case of harmonic inpainting the regulariser is the square  of the $L^{2}$ norm of the gradient $\int_{\Omega} |\nabla u|^{2}~dx$. In the case of Euler's elastica the
regulariser is 
\begin{equation}\label{elastica}
\int_{\Omega} \left (\alpha+\beta\left (\nabla \cdot \frac{\nabla u}{|\nabla u|} \right )^{2}  \right )|\nabla u|~dx.
\end{equation}
The minimisation of the Euler's elastica energy corresponds to the minimisation of the length and curvature of the level lines of the image. Thus, this method is able to connect  large gaps in the inpainting domain, see Figure \ref{inpainting1}(d). However, the term \eqref{elastica} is non-convex and thus difficult to be minimised. In order to implement Euler's elastica inpainting we used the augmented Lagrangian method, proposed in \cite{tai2010fast}. There, the leading computational cost per iteration is due to the solution of one linear PDE and a system of linear PDEs (solved with FFT as well), in comparison to our approach which consists of one linear PDE only. Hence, in Table \ref{times}, we do not give absolute computational times as that would not be fair even more so because we did not optimise the Euler's elastica algorithm with respect to the involved parameters. Moreover, it should be emphasised that the solution of TV-TV$^2$ inpainting amounts to solve a convex problem while Euler's elastica inpainting is a non-convex model.

In Figure \ref{inpainting1} we see that, in contrast to harmonic and TV inpainting, TV$^{2}$ inpainting is able to connect big gaps with the price of a blur, which can be controlled using a shock filter \cite{alvarez1994signal}, see Figure \ref{inpainting1}(g). Notice that one has to choose $\alpha$ small or even 0, in order to make the TV$^{2}$ term  dominant, compare Figures \ref{inpainting1}(e) and (f).
 
 We  also observe that in the TV$^{2}$ case, the ability to connect  large gaps depends on the size and geometry of the inpainting domain, see Figure \ref{inpainting2} and also see \cite{mineipol} for more examples. Deriving sufficient conditions on the size and geometry of the inpainting domain for this connectivity to happen is a matter of future research.
 
 Finally, in Figure \ref{inpainting3} we compare TV, TV$^{2}$ and Euler's elastica, for its application to removing text (of large font) from a natural image.  TV inpainting, Figure \ref{inpainting3}(b) gives unsatisfactory results, by producing piecewise constant results inside the inpainting domain, while the TV$^{2}$ and Euler's elastica results are comparable and seem to be visually closer to the true solution than the TV inpainted image.
 
 \section{Comparison with other higher-order methods}\label{comparison}
In the case of denoising and deblurring we compared our method with TGV, which we consider a state of the art method in the field of higher-order image reconstruction in the variational context. Indeed, in both image reconstruction tasks, TGV gives better qualitative results, in terms of the SSIM index. However the computational time that was needed to obtain the TGV result solved with the primal-dual method is significantly more than the one that is needed to compute the TV-TV$^{2}$ method using split Bregman, see Table \ref{times}. We also show that with  simple and fast post-processing techniques we can obtain results comparable with TGV. For these reasons, we think that the TV-TV$^2$ approach is in particular interesting for applications in which the speed of computation matters. Regarding the comparison with inf-convolution, our method is slightly faster and  results in better reconstructions in deblurring while in denoising the results are comparable.
As far as inpainting is concerned, we compared our method (essentially the pure TV$^{2}$ approach) with the Euler's elastica, a higher-order variational method which is capable of giving very good results, by connecting large gaps in the inpainting domain. However, the regulariser there is non convex, something that could make the minimisation process produce a local minimiser instead of a global one. In TV$^{2}$ the regulariser --  regarded as a convex simplification of the Euler's elastica idea -- it has the ability to connect large gaps and the slight blur that is produced can be reduced by using a shock filter see for example \cite{alvarez1994signal,gilboa2004image} and Figure \ref{inpainting1}(g). Moreover as we pointed out  in the previous sections, our approach is computationally less expensive compared to TGV for image denoising and deblurring and Euler elastica for image inpainting.

\begin{table*}
\begin{center}
\resizebox{18cm}{!}{
\begin{tabular}{| c | c | c | c | c |}
\hline
\multicolumn{5}{|c|}{\large{\textbf{Denoising (B\&W) -- Image size: 200$\times$300}}}\\
\hline \hline
 & No of iterations for GT & No of iterations for $\|u_{k}-GT\|_{2}/\|GT\|_{2}\le 10^{-3}$ & CPU time (secs)& time per iteration (secs) \\
   \hline
   \textbf{TV}          					&  2000 & 136 & 2.86 &0.0210\\
   \hline
   \textbf{TV}$^{\mathbf{2}}$ 				& 2000  &107 & 3.62 &0.0338\\
   \hline
   \textbf{TV-}\textbf{TV}$^{\mathbf{2}}$	& 2000 & 86 &4.05 & 0.0471\\
   \hline
   \textbf{Inf.-Conv.}					& 2000 & 58 &3.33 & 0.0574\\
   \hline
   \textbf{TGV}							& 2000 & 1297 & 36.25 & 0.0279\\
\hline \hline
\multicolumn{5}{|c|}{\large{\textbf{Deblurring (B\&W) -- Image size: 200$\times$300}}}\\
\hline \hline
 & No of iterations for GT & No of iterations for $\|u_{k}-GT\|_{2}/\|GT\|_{2}\le 10^{-2}$ & CPU time (secs)& time per iteration (secs) \\
   \hline
   \textbf{TV} 							& 1000 & 478 & 10.72 &0.0257\\
   \hline
   \textbf{TV}$^{\mathbf{2}}$				& 1000& 108 & 3.64 & 0.0337\\
   \hline
   \textbf{TV-}\textbf{TV}$^{\mathbf{2}}$	& 1000 & 517& 25.47 & 0.0493\\
   \hline
   \textbf{Inf.-Conv.}					& 1000 &108 & 7.47 & 0.0692\\
   \hline
   \textbf{TGV} & \multicolumn{3}{|c|}{CPU time more than 10 minutes -- see relevant comment in Section \ref{deblurring}}  & 1.22   \\
   \hline \hline
\multicolumn{5}{|c|}{\large{\textbf{Inpainting (Colour) -- Image size: 375$\times$500}}}\\
\hline \hline
&   \multicolumn{2}{|c|}{No of iterations for $\|u_{k}-u_{k-1}\|_{2}/\|u_{k-1}\|_{2}\le 8\cdot 10^{-3}$}       & CPU time (secs)  & time per iteration (secs)\\
\hline
\textbf{TV}                             &  \multicolumn{2}{|c|}{88}  &  26.67  & 0.3031 \\	
\hline
\textbf{TV}$^{\mathbf{2}}$	   &  \multicolumn{2}{|c|}{103}  & 46.34  & 0.4499 \\			\hline	
 \textbf{E.e.} & \multicolumn{4}{|c|}{CPU time more than 10 minutes -- see relevant comment in Section \ref{inpainting}}    \\
 \hline			
\end{tabular}
}
\vspace{0.2 cm}
\caption{Computational times for the examples of Figures \ref{denoisingimages}, \ref{deblurringimages} and \ref{inpainting3}. For the denoising and deblurring examples we  computed a ground true solution (GT) for every method by taking a large number of iterations and record the number of iterations and CPU time it takes for the relative residual of the iterates and the ground true solution to fall below a certain threshold. For the inpainting, we give the number of iterations and CPU time it took to compute the solutions shown in Figure \ref{inpainting3}, where we chose as  a stopping criterium a small relative residual of the iterates. The TGV examples were computed using $\sigma=\tau=0.25$ in the primal-dual method described in \cite{tgvcolour}. The implementation was done using MATLAB (2011) in a Macbook 10.7.3, 2.4 GHz Intel Core 2 Duo and 2 GB of memory}
\label{times}
\end{center}
\end{table*}

\section{Conclusion}
We formulate a second order variational problem in the space of functions of bounded Hessian in the context of convex functions of measures. We prove existence and uniqueness of minimisers using a relaxation technique as well as stability. We propose the use of the split Bregman method for the numerical solution of the analogue discretised problem. The application of the split Bregman method to our model is quite robust and is converging after a few iterations. We perform numerical experiments by denoising images that have been corrupted by Gaussian  noise, deblurring  images that have been convoluted with Gaussian kernels, as well as in image inpainting.

In the case of denoising and deblurring, the introduction of the second order term leads to a significant reduction of the staircasing effect resulting in piecewise smooth images rather than piecewise constant images when using the ROF model. The superiority of an approach that combines first and second order regularisation rather than first order regularisation only, is confirmed quantitatively by the SSIM index. In the case of inpainting the higher-order method is able to connect edges along large gaps, a task that TV-inpainting is incapable of solving.

In summary, our approach is a simple and convex higher-order extension of total variation regularisation that improves the latter by reducing the staircasing effect in image denoising and deblurring, and by more faithfully respecting the good continuation principle in image inpainting. It can compete with other higher-order methods of its kind by giving almost comparable qualitative results while computing them in a fraction of time.

As fas as future work is concerned, a (not necessarily rigorous) rule for selecting the parameters $\alpha$ and $\beta$ would be useful. Investigating the links with inf-convolution and spatially dependent regularisation on our model is also of interest.  Moreover, the relation between the continuum and the discretised model could be investigated through $\Gamma$-convergence arguments, see \cite{dalmasogamma} and \cite{Braidesgamma}. Finally the characterisation of subgradients of this approach and the analysis of solutions of the corresponding PDE flows for different choices of functions f and g promises to give more insight into the qualitative properties of this regularisation procedure. The characterisation of subgradients will also give more insight to properties of exact solutions of the minimisation of \eqref{functional} concerning the avoidance of the staircasing effect.

\subsection*{Acknowledgements}The authors acknowledge the financial support provided by the Cambridge Centre for Analysis (CCA), the Royal Society International Exchanges Award IE110314 for the project ``High-order Compressed Sensing for Medical Imaging", the EPSRC / Isaac Newton Trust Small Grant ``Non-smooth geometric reconstruction for high resolution MRI imaging of fluid transport in bed reactors'' and the EPSRC first grant Nr. EP/J009539/1 ``Sparse \& Higher-order Image Restoration''. Further, this publication is based on work supported by Award No. KUK-I1-007-43 , made by King Abdullah University of Science and Technology (KAUST). We thank Clarice Poon for providing us with the Euler's elastica code. Finally, we would like to thank the referees for their very useful comments and suggestions that improved the presentation of the paper.

\appendix
\section{Some useful theorems}\label{appendixA}
\newtheorem{firstth}{Proposition}[section]
\begin{firstth}\label{convexmeasure}
Suppose that $g:\mathbb{R}^{m}\to\mathbb{R}$ is a continuous function, positively homogeneous of degree $1$ and let $\mu\in[\mathcal{M}(\Omega)]^{m}$. Then for every positive measure Radon measure $\nu$ such that $\mu$ is absolulely continuous with respect to $\nu$, we have
\[g(\mu)=g\left (\frac{\mu}{\nu} \right )\nu.\] 
Moreover, if $g$ is a convex function, then $g:[\mathcal{M}(\Omega)]^{m}\to \mathcal{M}(\Omega)$ is a convex function as well.
\end{firstth}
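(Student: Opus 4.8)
The plan is to establish the density-change identity first, and then obtain convexity as an easy consequence of it together with the pointwise convexity inequality for $g$ on $\mathbb{R}^m$.

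For the identity, the main tool will be the chain rule for Radon--Nikod\'ym derivatives. Since $\mu\ll\nu$, its total variation satisfies $|\mu|\ll\nu$ as well, so $\mu\ll|\mu|\ll\nu$. I would write $\sigma:=\mu/|\mu|$ for the polar density, so that $|\sigma|=1$ holds $|\mu|$-a.e., and $h:=|\mu|/\nu\ge 0$ for the density of $|\mu|$ with respect to $\nu$; the chain rule then gives $\mu/\nu=\sigma h$ $\nu$-a.e. Using that $g$ is positively homogeneous of degree $1$, for $\nu$-a.e.\ $x$ with $h(x)>0$ one has $g\big(\sigma(x)h(x)\big)=h(x)\,g\big(\sigma(x)\big)$, whereas on $\{h=0\}$ (a set carrying no $|\mu|$-mass, where $\sigma$ need not even be defined) one has $g(\sigma h)=g(0)=0$. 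Hence, for every Borel set $A\subseteq\Omega$,
\[
\Big(g\big(\tfrac{\mu}{\nu}\big)\,\nu\Big)(A)=\int_A g(\sigma h)\, d\nu=\int_A h\, g(\sigma)\, d\nu=\int_A g(\sigma)\, d|\mu|=\Big(g\big(\tfrac{\mu}{|\mu|}\big)\,|\mu|\Big)(A),
\]
that is, $g(\mu)=g(\mu/\nu)\,\nu$. Measurability of all the integrands is guaranteed by the continuity of $g$.

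For the convexity statement, recall that ``$g:[\mathcal{M}(\Omega)]^m\to\mathcal{M}(\Omega)$ is convex'' should be read in the sense that, for all $\mu_1,\mu_2\in[\mathcal{M}(\Omega)]^m$ and $t\in[0,1]$, the signed measure $t\,g(\mu_1)+(1-t)\,g(\mu_2)-g\big(t\mu_1+(1-t)\mu_2\big)$ is nonnegative. I would take $\nu:=|\mu_1|+|\mu_2|$ as a common reference measure; it dominates $\mu_1$, $\mu_2$, and hence also $t\mu_1+(1-t)\mu_2$. The first part then rewrites all three measures as densities against $\nu$: $g(\mu_i)=g(\mu_i/\nu)\,\nu$ and, by linearity of the Radon--Nikod\'ym derivative, $g\big(t\mu_1+(1-t)\mu_2\big)=g\big(t\,\mu_1/\nu+(1-t)\,\mu_2/\nu\big)\,\nu$. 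Applying the convexity of $g$ on $\mathbb{R}^m$ pointwise $\nu$-a.e.\ gives $g\big(t\,\mu_1/\nu+(1-t)\,\mu_2/\nu\big)\le t\,g(\mu_1/\nu)+(1-t)\,g(\mu_2/\nu)$, and multiplying this scalar inequality by the nonnegative measure $\nu$ (equivalently, integrating it against $\nu$ over an arbitrary Borel set) yields the desired inequality between measures.

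The only point requiring genuine care is the behaviour on $\{h=0\}$, where the polar density $\mu/|\mu|$ is not defined; this is disposed of by the observation $g(0)=0$, which follows from positive homogeneity by taking $t=0$ in $g(tx)=tg(x)$. Everything else — measurability, integrability, the chain rule and the linearity of Radon--Nikod\'ym derivatives — is standard, so I do not expect any serious obstacle.
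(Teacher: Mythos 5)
Your proof is correct and follows essentially the same route as the paper's: the chain rule for Radon--Nikod\'ym derivatives combined with positive homogeneity yields the density-change identity, and the common dominating measure $|\mu_{1}|+|\mu_{2}|$ together with the pointwise convexity of $g$ on $\mathbb{R}^{m}$ yields the convexity claim. Your explicit treatment of the set where the density $|\mu|/\nu$ vanishes, via $g(0)=0$, is a small refinement of a point the paper's computation glosses over.
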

\begin{proof}
Since $\mu\ll\nu$, we have that $|\mu|\ll \nu$. Using the fact that $g$ is positively homogeneous and the fact that $|\mu|/\nu$ is a positive function, we get
\begin{equation*}
g(\mu)=g\left (\frac{\mu}{|\mu|} \right )|\mu|
	   =g\left (\frac{\mu}{|\mu|} \right) \frac{|\mu|}{\nu}\nu=g\left (\frac{\mu}{|\mu|} \frac{|\mu|}{\nu} \right)\nu=g\left (\frac{\mu}{\nu} \right)\nu.
\end{equation*}
Assuming that $g$ is convex and using the first part of the proposition we get for $0\le\lambda\le 1$, $\mu$, $\nu\in [\mathcal{M}(\Omega)]^{m}$:
\begin{eqnarray*}
g(\lambda \mu+(1-\lambda)\nu)&=&g\left( \frac{\lambda \mu+(1-\lambda)\nu}{|\lambda \mu+(1-\lambda)\nu|} \right) |\lambda \mu+(1-\lambda)\nu|\\
&=& g\left( \frac{\lambda \mu+(1-\lambda)\nu}{|\mu|+|\nu|} \right) (|\mu|+|\nu|)\\
&=&g\left(\lambda \frac{\mu}{|\mu|+|\nu|}+(1-\lambda)\frac{\nu}{|\mu|+|\nu|} \right) (|\mu|+|\nu|)\\
&\le&\lambda g\left( \frac{\mu}{|\mu|+|\nu|} \right) (|\mu|+|\nu|)+(1-\lambda)g\left (\frac{\nu}{|\mu|+|\nu|}\right) (|\mu|+|\nu|)\\
&=&\lambda g\left( \frac{\mu}{|\mu|} \right) |\mu|+(1-\lambda)g\left (\frac{\nu}{|\nu|}\right) |\nu|\\
&=&\lambda g(\mu)+(1-\lambda) g(\nu).\\
\end{eqnarray*}
\end{proof}

The following theorem which is a special case of a theorem that was proved in \cite{buttazzo1991functionals} and can be also found in \cite{AmbrosioBV} establishes the lower semicontinuity of convex functionals of measures with respect to the weak$^{\ast}$ convergence.

\newtheorem{firstth2}[firstth]{Theorem}
\begin{firstth2}[Buttazzo-Freddi, 1991]\label{BF}
Let $\Omega$ be an open subset of $\mathbb{R}^{n}$, $\nu$, $(\nu_{k})_{k\in \mathbb{N}}$ be $\mathbb{R}^{m}$-valued finite Radon measures and $\mu$, $(\mu_{k})_{k\in\mathbb{N}}$ be positive Radon measures in $\Omega$. Let $g:\mathbb{R}^{m}\to\mathbb{R}$ be a convex  function and suppose that $\nu_{k}\to\nu$ and $\mu_{k}\to\mu$ weakly$^{\ast}$ in $\Omega$. Consider the Lebesgue decompositions $\nu=(\nu/\mu)\mu+\nu^{s}$, $\nu_{k}=(\nu_{k}/\mu_{k})\mu_{k}+\nu_{k}^{s}$, $k\in \mathbb{N}$. Then 
\[\int_{\Omega}g\left( \frac{\nu}{\mu}(x)\right)d\mu(x)+\int_{\Omega}g_{\infty}\left(\frac{\nu^{s}}{|\nu^{s}|}(x) \right)d|\nu^{s}|(x)\le \liminf_{k\to\infty}\int_{\Omega}g\left( \frac{\nu_{k}}{\mu_{k}}(x)\right)d\mu_{k}(x)+\int_{\Omega}g_{\infty}\left(\frac{\nu_{k}^{s}}{|\nu_{k}^{s}|}(x) \right)d|\nu_{k}^{s}|(x).\]
In particular, if $\mu=\mu_{k}=\mathcal{L}^{n}$ for all $k\in \mathbb{N}$ then according to the definition \eqref{defconv} the above inequality can be written as follows:
\[g(\nu)(\Omega)\le \liminf_{k\to\infty}g(\nu_{k})(\Omega).\]
\end{firstth2}
The following theorem is a special case of Theorem $2.3$ in \cite{Dem}.
\newtheorem{dem84}[firstth]{Theorem}
\begin{dem84}[Demengel-Temam, 1984]\label{demtem}
Suppose that $\Omega\subseteq \mathbb{R}^{n}$ is open, with Lipschitz boundary and let  $g$ be a convex function from $\mathbb{R}^{n\times n}$ to $\mathbb{R}$ with at most linear growth at infinity. Then for every $u\in BH(\Omega)$ 
there exists a sequence $(u_{k})_{k\in \mathbb{N}}\subseteq C^{\infty}(\Omega)\cap W^{2,1}(\Omega)$ such that
 \[\|u_{k}-u\|_{L^{1}(\Omega)}\to 0, \quad |D^{2}u_{k}|(\Omega)\to |D^{2} u|(\Omega),\]\[  g(D^{2}u_{k})(\Omega)\to g(D^{2}u)(\Omega),\;\;\text{ as }\;\;k\to\infty. \]
\end{dem84}

\newtheorem{Kron}[firstth]{Lemma}
\begin{Kron}[Kronecker's lemma]\label{kronecker}
Suppose that $(a_{n})_{n\in\mathbb{N}}$ and $(b_{n})_{n\in\mathbb{N}}$ are two  sequences of real numbers such that \\$\sum_{n=1}^{\infty}a_{n}<\infty$ and $0<b_{1}\le b_{2}\le\ldots$ with $b_{n}\to\infty$. Then 
\[\frac{1}{b_{n}}\sum_{k=1}^{n}b_{k}a_{k}\to 0,\quad \text{as } n\to\infty.\]
In  particular, if $(c_{n})_{n\in\mathbb{N}}$ is a decreasing positive real sequence such that $\sum_{n=1}^{\infty}c_{n}^{2}<\infty$, then
\[c_{n}\sum_{k=1}^{n}c_{k}\to 0,\quad \text{as } n\to\infty.\]
\end{Kron}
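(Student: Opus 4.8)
The plan is to prove Kronecker's lemma by summation by parts (Abel summation) combined with a Toeplitz-type averaging argument, and then to obtain the special case by a direct substitution.

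First I would introduce the partial sums $S_n := \sum_{k=1}^n a_k$, with the convention $S_0 := 0$, and denote by $S := \lim_{n\to\infty} S_n$ the finite limit guaranteed by the hypothesis $\sum_{n=1}^\infty a_n < \infty$. Applying Abel summation to $\sum_{k=1}^n b_k a_k = \sum_{k=1}^n b_k (S_k - S_{k-1})$ and using $S_0 = 0$ gives
\[
\sum_{k=1}^n b_k a_k = b_n S_n - \sum_{k=1}^{n-1}(b_{k+1}-b_k) S_k .
\]
Dividing by $b_n > 0$ yields
\[
\frac{1}{b_n}\sum_{k=1}^n b_k a_k = S_n - \frac{1}{b_n}\sum_{k=1}^{n-1}(b_{k+1}-b_k) S_k .
\]
Since $S_n \to S$, it remains to show that the second term on the right also converges to $S$; the difference will then tend to $S-S=0$.

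This averaging step is the heart of the argument. Because $(b_n)$ is nondecreasing one has $b_{k+1}-b_k \ge 0$, and $\sum_{k=1}^{n-1}(b_{k+1}-b_k) = b_n - b_1$, so $\tfrac{1}{b_n}\sum_{k=1}^{n-1}(b_{k+1}-b_k) = 1 - b_1/b_n \to 1$ as $b_n\to\infty$. Writing
\[
\frac{1}{b_n}\sum_{k=1}^{n-1}(b_{k+1}-b_k)S_k - S = \frac{1}{b_n}\sum_{k=1}^{n-1}(b_{k+1}-b_k)(S_k - S) - \frac{b_1}{b_n}\,S ,
\]
I would fix $\varepsilon > 0$, pick $N$ with $|S_k - S| < \varepsilon$ for all $k \ge N$, and split the sum at $N$: the finite initial block $\sum_{k=1}^{N-1}(b_{k+1}-b_k)(S_k-S)$ has a numerator independent of $n$ and is therefore annihilated by the factor $1/b_n \to 0$, while the tail block is bounded in absolute value by $\varepsilon\,(b_n - b_N)/b_n \le \varepsilon$. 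Since also $\tfrac{b_1}{b_n}S \to 0$, this gives $\limsup_{n\to\infty}\big|\tfrac{1}{b_n}\sum_{k=1}^{n-1}(b_{k+1}-b_k)S_k - S\big| \le \varepsilon$, and letting $\varepsilon \downarrow 0$ shows the second term tends to $S$. Combined with $S_n \to S$ this proves $\tfrac{1}{b_n}\sum_{k=1}^n b_k a_k \to 0$.

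Finally, for the particular case I would apply the statement just proved with $a_k := c_k^2$ and $b_k := 1/c_k$. The hypothesis $\sum_k c_k^2 < \infty$ gives $\sum_k a_k < \infty$, and in particular $c_k^2 \to 0$, hence $c_k \to 0$ and $b_k = 1/c_k \to \infty$; moreover $(b_k)$ is positive and nondecreasing because $(c_k)$ is positive and nonincreasing. Since $b_k a_k = c_k$, the conclusion reads $c_n\sum_{k=1}^n c_k = \tfrac{1}{b_n}\sum_{k=1}^n b_k a_k \to 0$, as desired. The only genuinely delicate point is the Toeplitz-type averaging step; everything else is routine bookkeeping with Abel summation.
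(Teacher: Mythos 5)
Your proof is correct. Note that the paper itself states Kronecker's lemma in Appendix D without any proof, treating it as a classical auxiliary result, so there is no in-paper argument to compare against; your write-up supplies the standard textbook derivation. The Abel summation identity $\sum_{k=1}^n b_k a_k = b_n S_n - \sum_{k=1}^{n-1}(b_{k+1}-b_k)S_k$ is verified correctly using $S_0=0$, the Toeplitz-type averaging step (splitting the weighted average of $S_k-S$ at a fixed index $N$, using monotonicity of $(b_k)$ so that the tail weights sum to at most $b_n - b_N \le b_n$, and killing the head with $1/b_n \to 0$) is carried out cleanly, and the specialisation $a_k = c_k^2$, $b_k = 1/c_k$ correctly uses that $\sum c_k^2 < \infty$ forces $c_k \to 0$ and hence $b_k \to \infty$, with monotonicity of $(b_k)$ inherited from that of $(c_k)$. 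No gaps.
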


\bibliographystyle{abbrv}

\bibliography{kostasbib}

\begin{thebibliography}{10}

\bibitem{Acar94analysisof}
R.~Acar and C.~R. Vogel.
\newblock Analysis of bounded variation penalty methods for ill-posed problems.
\newblock In {\em Inverse Problems}, pages 1217--1229, 1994.

\bibitem{alvarez1994signal}
L.~Alvarez and L.~Mazorra.
\newblock Signal and image restoration using shock filters and anisotropic
  diffusion.
\newblock {\em SIAM Journal on Numerical Analysis}, pages 590--605, 1994.

\bibitem{amar1994relaxation}
M.~Amar and V.~Cicco.
\newblock Relaxation of quasi-convex integrals of arbitrary order.
\newblock {\em Proceedings of the Royal Society of Edinburgh: Section A
  Mathematics}, 124(05):927--946, 1994.

\bibitem{AmbrosioBV}
L.~Ambrosio, N.~Fusco, and D.~Pallara.
\newblock {\em {Functions of bounded variation and free discontinuity
  problems}}.
\newblock Oxford University Press, USA, 2000.

\bibitem{BenningDiss}
M.~Benning.
\newblock {\em Singular Regularization of Inverse Problems}.
\newblock 2011.

\bibitem{TGVbregman}
M.~Benning, C.~Brune, M.~Burger, and J.~M{\"u}ller.
\newblock Higher-order {TV} methods -- {E}nhancement via {B}regman iteration.
\newblock {\em Journal of Scientific Computing}, 54(2-3):269--310, 2013.

\bibitem{benning2011error}
M.~Benning and M.~Burger.
\newblock Error estimates for general fidelities.
\newblock {\em Electronic Transactions on Numerical Analysis}, 38(44-68):77,
  2011.

\bibitem{Piffet}
M.~Bergounioux and L.~Piffet.
\newblock A second-order model for image denoising.
\newblock {\em Set Valued Anal. Variational Anal}, 18(3-4):277--306, 2010.

\bibitem{BEG}
A.~Bertozzi, S.~Esedoglu, and A.~Gillette.
\newblock Analysis of a two-scale {C}ahn-{H}illiard model for binary image
  inpainting.
\newblock {\em Multiscale Modeling and Simulation}, 6(3):913--936, 2008.

\bibitem{BG04}
A.~L. Bertozzi and J.~B. Greer.
\newblock Low-curvature image simplifiers: Global regularity of smooth
  solutions and {L}aplacian limiting schemes.
\newblock {\em Communications on Pure and Applied Mathematics}, 57(6):764--790,
  2004.

\bibitem{Boyd}
S.~Boyd, N.~Parikh, E.~Chu, B.~Peleato, and J.~Eckstein.
\newblock Distributed optimization and statistical learning via the alternating
  direction method of multipliers.
\newblock {\em Foundations and Trends in Machine Learning}, 3(1):1--122, 2011.

\bibitem{Braidesgamma}
A.~Braides.
\newblock {\em $\Gamma$-convergence for beginners}.
\newblock Oxford lecture series in mathematics and its applications, 2002.

\bibitem{tgvcolour}
K.~Bredies.
\newblock Recovering piecewise smooth multichannel images by minimization of
  convex functionals with total generalized variation penalty.
\newblock {\em Preprint}, 2012.

\bibitem{TGV}
K.~Bredies, K.~Kunisch, and T.~Pock.
\newblock Total generalized variation.
\newblock {\em SIAM Journal on Imaging Sciences}, 3:1--42, 2009.

\bibitem{bredies2011}
K.~Bredies and D.~Lorenz.
\newblock {\em Mathematische Bildverarbeitung: Einf{\"u}hrung in Grundlagen und
  moderne Theorie}.
\newblock Vieweg+Teubner Verlag, 2011.

\bibitem{BredValk}
K.~Bredies and T.~Valkonen.
\newblock Inverse problems with second-order total generalized variation
  constraints.
\newblock In {\em Proceedings of SampTA 2011 - 9th International Conference on
  Sampling Theory and Applications, Singapore}, 2011.

\bibitem{TVH1_1}
M.~Burger, L.~He, and C.~Sch\"onlieb.
\newblock Cahn-{H}illiard inpainting and a generalization for grayvalue images.
\newblock {\em SIAM J. Imaging Sci.}, 2(4):1129--1167, 2009.

\bibitem{burger2004convergence}
M.~Burger and S.~Osher.
\newblock Convergence rates of convex variational regularization.
\newblock {\em Inverse problems}, 20:1411, 2004.

\bibitem{burger2007error}
M.~Burger, E.~Resmerita, and L.~He.
\newblock Error estimation for {B}regman iterations and inverse scale space
  methods in image restoration.
\newblock {\em Computing}, 81(2):109--135, 2007.

\bibitem{buttazzo1991functionals}
G.~Buttazzo and L.~Freddi.
\newblock Functionals defined on measures and applications to non
  equi-uniformly elliptic problems.
\newblock {\em Annali di Matematica Pura ed Applicata}, 159(1):133--149, 1991.

\bibitem{cai2008framelet}
J.~Cai, R.~Chan, and Z.~Shen.
\newblock A framelet-based image inpainting algorithm.
\newblock {\em Applied and Computational Harmonic Analysis}, 24(2):131--149,
  2008.

\bibitem{ChambolleLions}
A.~Chambolle and P.~Lions.
\newblock Image recovery via total variation minimization and related problems.
\newblock {\em Numerische Mathematik}, 76:167--188, 1997.

\bibitem{chambolle2011first}
A.~Chambolle and T.~Pock.
\newblock A first-order primal-dual algorithm for convex problems with
  applications to imaging.
\newblock {\em Journal of Mathematical Imaging and Vision}, 40(1):120--145,
  2011.

\bibitem{chanL1}
T.~Chan and S.~Esedoglu.
\newblock Aspects of total variation regularized ${L}^1$ function
  approximation.
\newblock {\em SIAM Journal on Applied Mathematics}, pages 1817--1837, 2005.

\bibitem{chan2002euler}
T.~Chan, S.~Kang, and J.~Shen.
\newblock Euler's elastica and curvature-based inpainting.
\newblock {\em SIAM Journal on Applied Mathematics}, 63(2):564--592, 2002.

\bibitem{chan2001high}
T.~Chan, A.~Marquina, and P.~Mulet.
\newblock High-order total variation-based image restoration.
\newblock {\em SIAM J. Sci. Comput.}, 22(2):503--516, 2001.

\bibitem{CEP07}
T.~F. Chan, S.~Esedoglu, and F.~Park.
\newblock Image decomposition combining staircase reduction and texture
  extraction.
\newblock {\em J. Visual Communication and Image Representation},
  18(6):464--486, 2007.

\bibitem{charbonnier:two}
P.~Charbonnier, L.~Blanc-FŽraud, G.~Aubert, and M.~Barlaud.
\newblock Two deterministic half-quadratic regularization algorithms for
  computed imaging.
\newblock In {\em ICIP (2)'94}, pages 168--172, 1994.

\bibitem{combettes2006signal}
P.~Combettes, V.~Wajs, et~al.
\newblock Signal recovery by proximal forward-backward splitting.
\newblock {\em Multiscale Modeling and Simulation}, 4(4):1168--1200, 2006.

\bibitem{dalmasogamma}
G.~Dal~Maso.
\newblock {\em Introduction to $\Gamma$-convergence}.
\newblock Birkhauser, 1993.

\bibitem{dal2009higher}
G.~Dal~Maso, I.~Fonseca, G.~Leoni, and M.~Morini.
\newblock A higher order model for image restoration: the one dimensional case.
\newblock {\em SIAM J. Math. Anal}, 40(6):2351--2391, 2009.

\bibitem{demengelBH}
F.~Demengel.
\newblock Fonctions \`a {H}essien born\'e.
\newblock {\em Ann. Inst. Fourier}, 34:155--190, 1985.

\bibitem{Dem}
F.~Demengel and R.~Temam.
\newblock Convex functions of a measure and applications.
\newblock {\em Indiana Univ. Math. J.}, 33:673--709, 1984.

\bibitem{DiWeBu09}
S.~Didas, J.~Weickert, and B.~Burgeth.
\newblock Properties of higher order nonlinear diffusion filtering.
\newblock {\em J. Math. Imaging Vis.}, 35:208--226, 2009.

\bibitem{HintermuellerDong}
Y.~Dong, M.~Hinterm\"uller, F.~Knoll, and R.~Stollberger.
\newblock Total variation denoising with spatially dependent regularization.
\newblock In {\em ISMRM 18th Annual Scientific Meeting and Exhibition
  Proceedings}, page 5088, 2010.

\bibitem{duvalL1}
V.~Duval, J.~Aujol, and Y.~Gousseau.
\newblock The {TV}${L1}$ model: a geometric point of view.
\newblock {\em SIAM journal on multiscale modeling and simulation},
  8(1):154--189, 2009.

\bibitem{Eckstein}
J.~Eckstein.
\newblock Splitting methods for monotone operators with applications to
  parallel optimisation.
\newblock {\em PhD thesis}, 1989.

\bibitem{ekeland1999convex}
I.~Ekeland and R.~T{\'e}mam.
\newblock {\em Convex Analysis and Variational Problems}.
\newblock Classics in Applied Mathematics. Society for Industrial and Applied
  Mathematics, 1999.

\bibitem{esser2009general}
E.~Esser, X.~Zhang, and T.~Chan.
\newblock A general framework for a class of first order primal-dual algorithms
  for {TV} minimization.
\newblock {\em UCLA CAM Report 09-67}, 2009.

\bibitem{EvansPDEs}
L.~Evans.
\newblock {\em {Partial Differential Equations, volume 19 of Graduate Studies
  in Mathematics, Second Edition}}.
\newblock American Mathematical Society,, 2010.

\bibitem{FengProhl}
X.~Feng and A.~Prohl.
\newblock Analysis of total variation flow and its finite element
  approximations.
\newblock {\em ESAIM-Mathematical Modelling and Numerical Analysis}, 37(3):533,
  2003.

\bibitem{frick2011statistical}
K.~Frick, P.~Marnitz, and A.~Munk.
\newblock Statistical multiresolution estimation in imaging: Fundamental
  concepts and algorithmic framework.
\newblock {\em Electronic Journal of Statistics}, 6:231--268, 2012.

\bibitem{Henn2004}
C.~Frohn-Schauf, S.~Henn, and K.~Witsch.
\newblock Nonlinear multigrid methods for total variation image denoising.
\newblock {\em Computing and Visualization in Science}, 7(3):199--206, 2004.

\bibitem{Gabay1983299}
D.~Gabay.
\newblock Chapter {IX} applications of the method of multipliers to variational
  inequalities.
\newblock In M.~Fortin and R.~Glowinski, editors, {\em Augmented Lagrangian
  Methods: Applications to the Numerical Solution of Boundary-Value Problems},
  volume~15 of {\em Studies in Mathematics and Its Applications}, pages 299 --
  331. Elsevier, 1983.

\bibitem{gilboa2004image}
G.~Gilboa, N.~Sochen, and Y.~Zeevi.
\newblock Image enhancement and denoising by complex diffusion processes.
\newblock {\em Pattern Analysis and Machine Intelligence, IEEE Transactions
  on}, 26(8):1020--1036, 2004.

\bibitem{goldstein2009split}
T.~Goldstein and S.~Osher.
\newblock The split {B}regman method for ${L}1$ regularized problems.
\newblock {\em SIAM Journal on Imaging Sciences}, 2:323, 2009.

\bibitem{Green90bayesianreconstructions}
P.~J. Green.
\newblock Bayesian reconstructions from emission tomography data using a
  modified {EM} algorithm.
\newblock {\em IEEE Trans. Med. Imag}, pages 84--93, 1990.

\bibitem{HS06}
W.~Hinterberger and O.~Scherzer.
\newblock Variational methods on the space of functions of bounded {H}essian
  for convexification and denoising.
\newblock {\em Computing}, 76:109--133, 2006.

\bibitem{diderot}
G.~Kanizsa.
\newblock {\em La Grammaire du Voir}.
\newblock Diderot, 1996.

\bibitem{lairidge}
R.~Lai, X.~Tai, and T.~Chan.
\newblock A ridge and corner preserving model for surface restoration.
\newblock {\em UCLA CAM Report 11-55}, 2011.

\bibitem{lefkimmiatis2010hessian}
S.~Lefkimmiatis, A.~Bourquard, and M.~Unser.
\newblock Hessian-based norm regularization for image restoration with
  biomedical applications.
\newblock {\em Image Processing, IEEE Transactions on}, (99):1--1, 2010.

\bibitem{LLT03}
M.~Lysaker, A.~Lundervold, and X.~C. Tai.
\newblock Noise removal using fourth-order partial differential equation with
  applications to medical magnetic resonance images in space and time.
\newblock {\em IEEE Transactions on Image Processing}, 12(12):1579--1590, 2003.

\bibitem{LT06}
M.~Lysaker and X.~C. Tai.
\newblock Iterative image restoration combining total variation minimization
  and a second-order functional.
\newblock {\em International Journal of Computer Vision}, 66(1):5--18, 2006.

\bibitem{Nikolova}
M.~Nikolova.
\newblock Minimizers of cost-functions involving nonsmooth data-fidelity terms.
  application to the processing of outliers.
\newblock {\em SIAM Journal on Numerical Analysis}, pages 965--994, 2003.

\bibitem{nikolova04}
M.~Nikolova.
\newblock A variational approach to remove outliers and impulse noise.
\newblock {\em Journal of Mathematical Imaging and Vision}, 20(1):99--120,
  2004.

\bibitem{OBG}
S.~Osher, M.~Burger, D.~Goldfarb, J.~Xu, and W.~Yin.
\newblock An iterative regularization method for total variation-based image
  restoration.
\newblock {\em Multiscale Model. Simul.}, 4:460--489, 2005.

\bibitem{mineipol}
K.~Papafitsoros, B.~Sengul, and C.~Sch\"onlieb.
\newblock Combined first and second order total variation inpainting using
  split {B}regman.
\newblock {\em {T}o appear in IPOL}, 2013.

\bibitem{pcbc09}
T.~Pock, D.~Cremers, H.~Bischof, and A.~Chambolle.
\newblock Global solutions of variational models with convex regularization.
\newblock {\em SIAM Journal on Imaging Sciences}, 3(4):1122--1145, 2010.

\bibitem{poschl2008tikhonov}
C.~P{\"o}schl.
\newblock {\em Tikhonov regularization with general residual term}.
\newblock PhD thesis, Leopold-Franzens-Universit{\"a}t Innsbruck, Austria,
  2008.

\bibitem{PS08}
C.~P\"oschl and O.~Scherzer.
\newblock Characterization of minimisers of convex regularization functionals.
\newblock {\em Contemporary mathematics}, 451:219--248, 2008.

\bibitem{rudin1992nonlinear}
L.~Rudin, S.~Osher, and E.~Fatemi.
\newblock Nonlinear total variation based noise removal algorithms.
\newblock {\em Physica D: Nonlinear Phenomena}, 60(1-4):259--268, 1992.

\bibitem{Sch98a}
O.~Scherzer.
\newblock Denoising with higher order derivatives of bounded variation and an
  application to parameter estimation.
\newblock {\em jour-CASC}, 60(1):1--27, 1998.

\bibitem{TVH1_2}
C.~Sch\"onlieb, A.~Bertozzi, M.~Burger, and L.~He.
\newblock Image inpainting using a fourth-order total variation flow.
\newblock {\em Proc. Int. Conf. SampTA09, Marseilles}, 2009.

\bibitem{setzer}
S.~Setzer.
\newblock Split {Bregman} algorithm, {Douglas-Rachford} splitting and frame
  shrinkage.
\newblock {\em Scale Space and Variational Methods in Computer Vision},
  5567:464--476, 2009.

\bibitem{SS08}
S.~Setzer and G.~Steidl.
\newblock Variational methods with higher order derivatives in image
  processing.
\newblock {\em Approximation XII}, pages 360--386, 2008.

\bibitem{setzer2011infimal}
S.~Setzer, G.~Steidl, and T.~Teuber.
\newblock Infimal convolution regularizations with discrete $\ell_1$-type
  functionals.
\newblock {\em Comm. Math. Sci}, 9:797--872, 2011.

\bibitem{tai2010fast}
X.~Tai, J.~Hahn, and G.~Chung.
\newblock A fast algorithm for {E}ulerÕs elastica model using augmented
  {L}agrangian method.
\newblock {\em SIAM Journal on Imaging Sciences}, 4(1):313--344, 2011.

\bibitem{Tikhonov}
A.~Tikhonov.
\newblock Solution of incorrectly formulated problems and the regularization
  method.
\newblock In {\em Soviet Math. Dokl.}, volume~5, page 1035, 1963.

\bibitem{Vassilevski97acomparison}
P.~Vassilevski and J.~G. Wade.
\newblock A comparison of multilevel methods for total variation
  regularization.
\newblock {\em Elec. Trans. Numer. Anal}, pages 6--255, 1997.

\bibitem{vese2001study}
L.~Vese.
\newblock A study in the {BV} space of a denoising-deblurring variational
  problem.
\newblock {\em Applied Mathematics and Optimization}, 44(2):131--161, 2001.

\bibitem{vogel95}
C.~Vogel.
\newblock A multigrid method for total variation-based image denoising.
\newblock {\em Progress in Systems and Control Theory}, 20:323--323, 1995.

\bibitem{Wang_Yin_2007}
Y.~Wang, W.~Yin, and Y.~Zhang.
\newblock A fast algorithm for image deblurring with total variation
  regularization.
\newblock {\em Image Rochester NY}, pages 1--19, 2007.

\bibitem{wang2009mean}
Z.~Wang and A.~Bovik.
\newblock Mean squared error: Love it or leave it? a new look at signal
  fidelity measures.
\newblock {\em Signal Processing Magazine, IEEE}, 26(1):98--117, 2009.

\bibitem{wang2004image}
Z.~Wang, A.~Bovik, H.~Sheikh, and E.~Simoncelli.
\newblock Image quality assessment: From error visibility to structural
  similarity.
\newblock {\em Image Processing, IEEE Transactions on}, 13(4):600--612, 2004.

\bibitem{Whittaker}
E.~T. Whittaker.
\newblock On a new method of graduation.
\newblock {\em Proceedings of the Edinburgh Mathematical Society}, 41:63--75,
  1923.

\bibitem{wu2010augmented}
C.~Wu and X.~Tai.
\newblock Augmented {L}agrangian method, dual methods, and split {B}regman
  iteration for {ROF}, vectorial {TV}, and high order models.
\newblock {\em SIAM Journal on Imaging Sciences}, 3(3):300--339, 2010.

\bibitem{YOGD}
W.~Yin, S.~Osher, D.~Goldfarb, and J.~Darbon.
\newblock Bregman iterative algorithms for $\ell_{1}$-minimisation with
  applications to compressed sensing.
\newblock {\em {SIAM} J. Imaging Science}, 1:142--168, 2008.

\end{thebibliography}

\end{document}